\documentclass[11pt,a4paper]{article}

\usepackage{amsmath,amssymb,amsthm}
\usepackage[font=small,labelfont=bf]{caption}
\usepackage[font=small,labelfont=normalfont]{subcaption}
\usepackage{a4wide}
\usepackage{hyperref}
\usepackage{graphicx}
\usepackage{url}
\usepackage[mathlines]{lineno}
\usepackage{xcolor}
\usepackage{enumerate}

\usepackage{todonotes}

\hypersetup{pdfpagemode=UseNone, pdfstartview=}

\graphicspath{{figures_arXiv/}}
\bibliographystyle{plainurl}

\newtheorem{thm}{Theorem}
\newtheorem*{thm*}{Theorem}
\newtheorem{prop}[thm]{Proposition}
\newtheorem{cor}[thm]{Corollary}
\newtheorem{obs}[thm]{Observation}
\newtheorem{lem}[thm]{Lemma}

\newtheorem{claim}{Claim}[thm]

\DeclareMathOperator{\conv}{conv}
\DeclareMathOperator{\cl}{cl}
\newcommand{\dline}[1]{\overline{#1}}
\newcommand{\astar}{a^*}

\title{A superlinear lower bound on the number of 5-holes}

\author{Oswin Aichholzer\thanks{Institute for Software Technology,
	Graz University of Technology, Austria,  {\tt [oaich,thackl,iparada,\allowbreak bvogt]@ist.tugraz.at} }
\and
Martin Balko\thanks{Department of Applied Mathematics and Institute for Theoretical Computer Science, Faculty of Mathematics and Physics, Charles University,
		Czech Republic, {\tt [balko,kyncl,valtr]@kam.mff.cuni.cz}} \thanks{Alfr\'{e}d R\'{e}nyi Institute of Mathematics, Hungarian Academy of Sciences, Budapest, Hungary}
\and
Thomas Hackl\textsuperscript{$\ast$}
\and
Jan Kyn\v{c}l\textsuperscript{$\dagger$}
\and 
Irene Parada\textsuperscript{$\ast$}
\and 
Manfred Scheucher\textsuperscript{$\ast$}\textsuperscript{$\ddagger$}\thanks{Institut f\"ur Mathematik, 
Technische Universit\"at Berlin, Germany,  {\tt [scheucher]@math.tu-berlin.de} }
\and
Pavel Valtr\textsuperscript{$\dagger$}\textsuperscript{$\ddagger$}
\and
Birgit Vogtenhuber\textsuperscript{$\ast$}
}

\begin{document}

\maketitle

\begin{abstract}
Let $P$ be a finite set of points in the plane in \emph{general position}, 
that is, no three points of $P$ are on a common line.
We say that a set $H$ of five points from $P$ is a \emph{$5$-hole in~$P$} 
if $H$ is the vertex set of a convex $5$-gon containing no other points of~$P$.
For a positive integer~$n$, let $h_5(n)$ be the minimum number of 5-holes 
among all sets of $n$ points in the plane in general position.

Despite many efforts in the last 30 years, the best known asymptotic lower and upper bounds for $h_5(n)$ have been of order $\Omega(n)$ and~$O(n^2)$, respectively.
We show that $h_5(n) = \Omega(n\log^{4/5}{n})$, obtaining the first superlinear lower bound on $h_5(n)$.

The following structural result, which might be of independent interest, is a crucial step in the proof of this lower bound.
If a finite set $P$ of points in the plane in general position is
partitioned by a line $\ell$ into two subsets, 
each of size at least 5 and not in convex position, 
then $\ell$ intersects the convex hull of some 5-hole in~$P$.
The proof of this result is computer-assisted.
\end{abstract}

\section{Introduction}
We say that a set of points in the plane is in \emph{general position} if
it contains no three points on a common line.
A point set is in \emph{convex position} if it is the vertex set of a convex polygon.
In 1935, Erd\H{o}s and Szekeres~\cite{ES_1935} proved the following theorem, which is a classical result
both in combinatorial geometry and Ramsey theory.

\begin{thm*}[\cite{ES_1935}, The Erd\H{o}s--Szekeres Theorem]\label{thm:Erd-Sz}
For every integer $k\ge3$, there is a smallest integer $n=n(k)$ such that every set of at least
$n$ points in general position in the plane contains $k$ points in convex position.
\end{thm*}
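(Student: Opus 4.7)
The plan is to apply the classical cups-and-caps method of Erd\H{o}s and Szekeres. By a generic rotation of the plane (which preserves both general position and convex position), I may assume that all $n$ points in question have pairwise distinct $x$-coordinates. I then call a sequence of points ordered by increasing $x$-coordinate a \emph{$k$-cup} if it has length $k$ and the slopes of its consecutive edges strictly increase, and a \emph{$k$-cap} if the slopes strictly decrease. Elementary geometry shows that any $k$-cup or $k$-cap is in convex position, so producing either such structure already completes the proof.

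Let $f(k,\ell)$ be the smallest $n$ such that every $n$-point set in general position with distinct $x$-coordinates contains a $k$-cup or an $\ell$-cap. The trivial base cases $f(2,\ell)=f(k,2)=2$, combined with the recursion
\[
f(k,\ell)\le f(k-1,\ell)+f(k,\ell-1)-1,
\]
will, via Pascal's identity, yield $f(k,\ell)\le\binom{k+\ell-4}{k-2}+1$. Setting $k=\ell$ then gives the desired finite upper bound $n(k)\le\binom{2k-4}{k-2}+1$, proving the theorem.

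To establish the recursion, suppose toward contradiction that $P$ is a set of $n=f(k-1,\ell)+f(k,\ell-1)-1$ points with neither a $k$-cup nor an $\ell$-cap. Let $T\subseteq P$ be the set of right endpoints of $(k-1)$-cups in $P$. If $|T|\ge f(k,\ell-1)$, then $T$ contains an $(\ell-1)$-cap (a $k$-cup in $T$ is ruled out by assumption); let $t$ be its leftmost vertex. Then $t$ is simultaneously the right endpoint of a $(k-1)$-cup and the left endpoint of an $(\ell-1)$-cap. Comparing the slope $s_{\mathrm{cup}}$ of the last cup-edge at $t$ with the slope $s_{\mathrm{cap}}$ of the first cap-edge at $t$: if $s_{\mathrm{cap}}>s_{\mathrm{cup}}$, appending the second cap-vertex to the cup produces a $k$-cup; if $s_{\mathrm{cap}}<s_{\mathrm{cup}}$, prepending the second-to-last cup-vertex to the cap produces an $\ell$-cap; either case contradicts our assumption. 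Hence $|T|<f(k,\ell-1)$. Moreover, $P\setminus T$ contains no $(k-1)$-cup (since any such cup's right endpoint must lie in $T$) and no $\ell$-cap, so $|P\setminus T|<f(k-1,\ell)$, which contradicts $|P|=n$.

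The main delicate point will be the slope-comparison step in the extension argument: one must verify that the appended or prepended vertex preserves strict monotonicity of slopes along the entire chain, which is where the distinct-$x$-coordinate normalization is essential (it ensures every pair of points has a well-defined finite slope and the trichotomy $s_{\mathrm{cap}}>s_{\mathrm{cup}}$ or $s_{\mathrm{cap}}<s_{\mathrm{cup}}$ is exhaustive under general position). Once the recursion is in place, induction on $k+\ell$ via Pascal's identity yields the closed-form bound and, in particular, the existence of $n(k)$ for every $k\ge 3$.
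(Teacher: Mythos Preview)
Your argument is the classical cups-and-caps proof of Erd\H{o}s and Szekeres, and it is correct as written. Note, however, that the paper does not prove this theorem at all: it is quoted as background from~\cite{ES_1935} (hence the unnumbered \texttt{thm*} environment and the explicit citation in the heading) and is used only to motivate the study of $k$-holes. So there is no ``paper's own proof'' to compare against; your proposal simply reproduces the original 1935 argument, including the sharp bound $n(k)\le\binom{2k-4}{k-2}+1$.
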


The Erd\H{o}s--Szekeres Theorem motivated a lot of further research, including numerous modifications and
extensions of the theorem. Here we mention only results closely related to the main
topic of our paper.

Let $P$ be a finite set of points in general position in the plane.
We say that a set $H$ of $k$ points from $P$ is a \emph{$k$-hole in~$P$} if $H$ is the vertex set of a convex $k$-gon containing no other points of~$P$.
In the 1970s, Erd\H{o}s~\cite{Erdos1978} asked whether, for every positive integer $k$, there is a $k$-hole in every sufficiently large finite point set in general position in the plane.
Harborth~\cite{Ha78} proved that there is a $5$-hole in every set of $10$ points in general position in the plane
and gave a construction of $9$ points in general position with no $5$-hole.
After unsuccessful attempts of researchers to answer Erd\H{o}s' question affirmatively for any fixed integer $k \ge 6$,
Horton~\cite{Ho83} constructed, for every positive integer~$n$, a set of $n$ points in general position in the plane with no $7$-hole.
His construction was later generalized to so-called \emph{Horton sets}
and \emph{squared Horton sets}~\cite{Valtr1992} and to higher dimensions~\cite{VALTR1992115}.
The question whether there is a 6-hole in every sufficiently large finite planar point set remained open
until 2007 when Gerken~\cite{Gerken2008} and Nicol\'as~\cite{Nicolas2007} independently gave an affirmative answer. 

For positive integers $n$ and $k$, let $h_k(n)$ be the minimum number of $k$-holes in a set of $n$ points in general position in the plane.
Due to Horton's construction,  $h_k(n)=0$ for every $n$ and every $k\ge 7$.
Asymptotically tight estimates for the functions $h_3(n)$ and $h_4(n)$ are known.
The best known lower bounds are due to Aichholzer et al.~\cite{afhhpv-lbnsc-14} 
who showed that $h_3(n) \ge n^2 - \frac{32n}{7} + \frac{22}{7}$ and $h_4(n) \ge \frac{n^2}{2} - \frac{9n}{4} -o(n)$.
The best known upper bounds $h_3(n) \le 1.6196n^2+o(n^2)$ and $h_4(n) \le 1.9397n^2+o(n^2)$ 
are due to B\'{a}r\'{a}ny and Valtr~\cite{BV2004}.

For $h_5(n)$ and $h_6(n)$, no matching bounds are known.
So far, the best known asymptotic upper bounds on $h_5(n)$ and $h_6(n)$ 
were obtained by B\'{a}r\'{a}ny and Valtr~\cite{BV2004} and give $h_5(n) \le 1.0207n^2+o(n^2)$ and $h_6(n) \leq 0.2006n^2+o(n^2)$.
For the lower bound on $h_6(n)$, Valtr~\cite{v-ephpp-12} showed $h_6(n) \ge n/229 - 4$.

In this paper we give a new lower bound on $h_5(n)$.
It is widely conjectured that $h_5(n)$ grows quadratically in~$n$, 
but to this date only lower bounds on $h_5(n)$ that are linear in $n$  have been known.
As noted by B\'{a}r\'{a}ny and F\"{u}redi~\cite{BF}, a linear lower bound of $\lfloor n/10 \rfloor$ follows directly from Harborth's result~\cite{Ha78}.
B\'{a}r\'{a}ny and K\'{a}rolyi~\cite{BK} improved this bound to $h_5(n) \ge n/6 - O(1)$.
In 1987, Dehnhardt~\cite{De87} showed $h_5(11)=2$ and $h_5(12)=3$, 
obtaining $h_5(n) \ge 3 \lfloor n/12 \rfloor$.
However, his result remained unknown to the scientific community until recently.
Garc\'{i}a~\cite{g-nnet-12} then presented a proof of the lower bound $h_5(n) \ge 3 \lfloor \frac{n-4}{8} \rfloor$ 
and a slightly better estimate $h_5(n) \ge \lceil 3/7(n-11)\rceil$ 
was shown by Aichholzer, Hackl, and Vogtenhuber~\cite{ahv-5g5h-12}.
Quite recently, Valtr~\cite{v-ephpp-12} obtained $h_5(n) \geq n/2-O(1)$.
This was strengthened by Aichholzer et al.~\cite{afhhpv-lbnsc-14} to $h_5(n) \ge 3n/4-o(n)$.
All improvements on the multiplicative constant were achieved
by utilizing the values of $h_5(10)$, $h_5(11)$, and $h_5(12)$.
In the bachelor's thesis of Scheucher~\cite{scheucher2013}
the exact values $h_5(13)=3$, $h_5(14)=6$, and $h_5(15)=9$ were determined
and $h_5(16) \in \{10,11\}$ was shown. 
During the preparation of this paper, we further determined the value $h_5(16)=11$; 
see the webpage~\cite{program_manfred}.
Table~\ref{tab:5holes} summarizes our knowledge on the values of $h_5(n)$ for~$n \le 20$.
The values $h_5(n)$ for $n \le 16$ can be used to obtain further improvements on the multiplicative constant.
By revising the proofs of \cite[Lemma~1]{afhhpv-lbnsc-14} and \cite[Theorem~3]{afhhpv-lbnsc-14},
one can obtain $h_5(n) \ge n-10$ and $h_5(n) \ge 3n/2-o(n)$, respectively.
We also note that
it was shown 
in~\cite{PINCHASI2006385}
that if $h_3(n) \ge (1+\epsilon)n^2 - o(n^2)$, then $h_5(n) = \Omega(n^2)$.

\begin{table}[htb]
\centering
\begin{tabular}{r|rrrrrrrrrrrr}
$n$	&9	&10	&11	&12	&13	&14	&15	&16	&17	&18	&19	&20\\
\hline
$h_5(n)$&0	&1	&2	&3	&3	&6	&9	&11	& $\leq 16$ & $\leq 21$ & $\leq 26$ & $\leq 33$\\
\end{tabular}
\caption{The minimum number $h_5(n)$ of 5-holes determined by any set of $n\le 20$ points.}
\label{tab:5holes}
\end{table}

As our main result, we give the first superlinear lower bound on $h_5(n)$. 
This solves an open problem, which was explicitely
stated, for example, in a book by Brass, Moser, and Pach~\cite[Chapter~8.4, Problem~5]{BMP05} and in the survey~\cite{aich09}.

\begin{thm}\label{theorem:theorem1}
There is an absolute constant $c > 0$ such that
for every integer~$n \ge 10$
we have $h_5(n) \ge c n \log^{4/5}n$.
\end{thm}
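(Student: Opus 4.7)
My plan is to prove Theorem~\ref{theorem:theorem1} by strong induction on $n$, using the structural lemma stated in the abstract as the mechanism that extracts fresh $5$-holes at every level of the recursion. Base cases $n\le n_0$ are handled by Table~\ref{tab:5holes} together with the linear bound $h_5(n)\ge n-10$ already mentioned in the excerpt.

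For the inductive step, let $P$ be a set of $n$ points in general position, ordered by $x$-coordinate as $p_1,\dots,p_n$. For each $i\in\{5,\dots,n-5\}$ set $L_i=\{p_1,\dots,p_i\}$, $R_i=P\setminus L_i$, and let $\ell_i$ be a vertical line separating them. Because every subset of a set in convex position is itself in convex position, the indices with $L_i$ in convex position form an initial segment $\{5,\dots,i_L^\ast\}$, and those with $R_i$ in convex position form a final segment $\{i_R^\ast,\dots,n-5\}$. For every $i\in(i_L^\ast,i_R^\ast)$ the structural lemma produces a $5$-hole whose convex hull is crossed by $\ell_i$. If this open interval is short, then $P$ has a convex subset of linear size on one side of the associated cut, a rigid configuration from which a direct argument via nested convex chains should already produce well more than $\Omega(n(\log n)^{4/5})$ $5$-holes, closing the induction at once. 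Otherwise the structural lemma delivers $\Omega(n)$ ``crossing incidences''.

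The central difficulty is that a single $5$-hole $H$ with $x$-span $[j_{\min},j_{\max}]$ is charged by each cut $\ell_i$ with $j_{\min}\le i<j_{\max}$, a multiplicity potentially as large as $n-4$. To convert $\Omega(n)$ incidences into $\Omega(n/(\log n)^{1/5})$ distinct $5$-holes, I would stratify the crossing $5$-holes by their span $s(H)=j_{\max}-j_{\min}$ and apply a dichotomy with threshold $t$: short-span $5$-holes (span at most $t$) contribute multiplicity at most $t$ and immediately yield $\Omega(n/t)$ distinct holes; long-span $5$-holes force $P$ to contain a long convex chain whose vertices avoid linearly many intermediate points, a structured configuration that can be recursed on. Setting $t\approx(\log n)^{1/5}$ balances the two cases, and combining with the inductive hypothesis applied to a balanced cut $L_{i_0},R_{i_0}$ yields the recurrence
\[
h_5(n)\;\ge\;h_5(\lfloor n/2\rfloor)+h_5(\lceil n/2\rceil)+\Omega\!\bigl(n/(\log n)^{1/5}\bigr),
\]
which unrolls via the Master Theorem to $\Omega(n(\log n)^{4/5})$.

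\textbf{Main obstacle.} The hardest step is establishing the additive $\Omega(n/(\log n)^{1/5})$ term at each recursive level. The structural lemma alone yields only one $5$-hole per cut, so controlling multiplicity well enough that $\Omega(n)$ cuts produce $\Omega(n/(\log n)^{1/5})$ genuinely distinct $5$-holes requires a careful geometric analysis of how the five vertices of a crossing $5$-hole can avoid intermediate points. I expect the unusual exponent $1/5$ to emerge through an Erd\H{o}s--Szekeres-type estimate applied to the convex chains implicitly created by long-span $5$-holes, and pinning this stratification down with the right constants will be by far the most subtle part of the argument.
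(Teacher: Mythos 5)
There is a genuine gap, and it sits exactly where you locate it yourself: the conversion of $\Omega(n)$ crossing incidences into $\Omega(n/(\log n)^{1/5})$ distinct $5$-holes. Your short-span branch is fine (a hole of span at most $t$ is charged by at most $t$ cuts), but the long-span branch does not work as described. A long-span $5$-hole is just five points forming one empty convex pentagon; its emptiness constrains the intermediate points only to avoid that single pentagon and gives no convex chain of superconstant length, and no Erd\H{o}s--Szekeres-type mechanism is offered (or apparent) that would turn "many cuts whose only witnesses have long span'' into a configuration with enough $5$-holes to close the induction. Since Theorem~\ref{theorem:theorem2} is purely existential, you cannot force the witness at each cut to have short span, so the dichotomy is not under your control. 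There is a second, independent flaw in the recurrence itself: the extra holes you collect cross various cuts $\ell_i$, not the one balanced cut $\ell_{i_0}$ used for the recursion. A hole crossing $\ell_i$ with $i<i_0$ may lie entirely inside $L_{i_0}$ and is then a $5$-hole of $L_{i_0}$, i.e.\ it may already be absorbed in the $h_5(\lfloor n/2\rfloor)$ term; adding it again is double counting. For the inequality $h_5(n)\ge 2h_5(n/2)+\Delta$ to be valid, the $\Delta$ extra holes must all be divided by $\ell_{i_0}$ itself.

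The paper resolves both difficulties at once by changing the decomposition: it fixes a single halving line $\ell$ and then uses Lemma~\ref{lemma:lemma3} (Steiger--Zhao) to slice $P$ into pairwise disjoint islands $P_1,\dots,P_s$, each containing exactly $r=\lfloor\log_2^{1/5}n\rfloor$ points of $A$ and $r$ of $B$. Within each island the dichotomy is not about spans but about convex position: if $P_i\cap A$ or $P_i\cap B$ is in convex position, every $5$-subset of it is a hole (it is separated from the other side by $\ell$ and from the rest of $P$ by the island property), giving $\binom{r}{5}$ holes per such island; otherwise Theorem~\ref{theorem:theorem2} gives one $\ell$-divided hole per island. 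Disjointness of the islands makes all these holes distinct with no multiplicity analysis, and every extra hole is divided by the same line $\ell$ used for the recursion, so it is legitimately additional to $h_5(A)+h_5(B)$. The exponent $4/5$ then comes from balancing $(n/r)\binom{r}{5}\approx nr^4$ against the additive term $n/r$ summed over $\log n$ levels, forcing $r\approx\log^{1/5}n$; your span threshold $t$ has no analogous quantitative payoff on the long-span side, so the balancing argument you hope for has nothing to balance against. (Your treatment of the degenerate case is essentially right, though simpler than you suggest: if, say, $L_{i_L^*}$ has linear size and is in convex position, then every $5$-subset of it is already a $5$-hole of $P$, since $L_{i_L^*}$ comprises all points left of that cut.)
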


Let $P$ be a finite set of points in the plane in general position and let $\ell$ be a line that 
contains no point of~$P$.
We say that $P$ is \emph{$\ell$-divided} if there is at least one point of $P$ in each of the two halfplanes determined by~$\ell$.
For an $\ell$-divided set~$P$, we use $P=A \cup B$ to denote the fact that $\ell$ partitions $P$ into the subsets $A$ and~$B$.
In the rest of the paper,
we assume without loss of generality that $\ell$ is vertical and directed upwards, 
$A$ is to the left of~$\ell$, and $B$ is to the right of~$\ell$.

The following result, which might be of independent interest, is a crucial step in the proof of Theorem~\ref{theorem:theorem1}.

\begin{thm}\label{theorem:theorem2}
Let $P=A \cup B$ be an $\ell$-divided set with $|A|,|B| \ge 5$ and with neither $A$ nor $B$ in convex position.
Then there is an $\ell$-divided $5$-hole in~$P$.
\end{thm}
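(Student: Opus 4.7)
\emph{Overall strategy.} My plan is strong induction on $|P| = |A| + |B|$, with a finite base case decided by computer enumeration of order types. The key invariant I will preserve during the inductive reduction is: if $p$ is a vertex of $\conv(P)$, then any $5$-hole of $P \setminus \{p\}$ is automatically a $5$-hole of $P$, since a supporting line of $\conv(P)$ at $p$ separates $p$ from every subset of $P \setminus \{p\}$ and therefore $p$ cannot lie in the convex hull of any five such points.

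\emph{Inductive step.} Assume without loss of generality that $|A| \ge 6$. I seek a point $p \in A$ satisfying \textbf{(i)} $p$ is a vertex of $\conv(P)$, and \textbf{(ii)} $A \setminus \{p\}$ is still not in convex position. For such $p$ the reduced set $P' = (A \setminus \{p\}) \cup B$ has $|A \setminus \{p\}|, |B| \ge 5$ with neither side in convex position, so induction applies, and the invariant above promotes the resulting $\ell$-divided $5$-hole of $P'$ to one of $P$. To produce $p$, fix an interior point $a^* \in A$ (exists since $A$ is not in convex position) and, by Carath\'eodory, a triangle $T \subseteq A \setminus \{a^*\}$ with $a^* \in \operatorname{int} T$. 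The leftmost point $q$ of $A$ is automatically a vertex of $\conv(P)$ because no point of $B$ lies to its left; if $q \notin T$, take $p := q$, and $a^*$ remains strictly inside $\conv(A \setminus \{p\})$ because $T$ survives. If $q \in T$, I choose a different hull vertex of $A$ outside $T$ when $\conv(A)$ has at least four vertices (such a vertex exists and can be shown to be a vertex of $\conv(P)$ by a short separation argument); when $\conv(A)$ is a triangle, the hypothesis $|A| \ge 6$ forces at least three interior points of $A$, and a brief case analysis shows some hull vertex can be removed while keeping some interior point interior to the new convex hull.

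\emph{Base case and main obstacle.} The base case is $|A| = |B| = 5$ with neither side in convex position. The realizable order types of such $10$-point configurations form a finite, enumerable family, and for each one I verify by computer that an $\ell$-divided $5$-hole exists; this is the computer-assisted component of the proof. The main obstacle is the inductive reduction itself: in pathological configurations, every natural candidate $p \in A$ for removal may fail condition (ii), forcing either a symmetric argument on $B$ or the absorption of the stubborn configuration into an enlarged base case (covering, say, $|A|, |B| \le 6$). Managing this case analysis so that (a) every non-base instance admits a valid reduction, and (b) the enlarged base-case family remains small enough to enumerate exhaustively, is the delicate core of the argument and likely where the bulk of the computer-assisted work lies.
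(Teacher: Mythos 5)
Your reduction step is the same one the paper uses (remove a vertex of $\conv(P)$ so that neither side becomes convex; holes in the resulting island lift to $P$), but the proposal breaks down exactly at the point you flag as the ``main obstacle'' and then dismiss. The configurations in which no extremal point of $P$ can be deleted from either side without making that side convex are precisely the $\ell$-critical sets of the paper, and these are \emph{not} bounded in size: nothing in their definition (and nothing you prove) confines them to $|A|,|B|\le 6$ or to any fixed cardinality, so they cannot be ``absorbed into an enlarged base case'' and enumerated by computer. (The paper even exhibits arbitrarily large $\ell$-critical sets, albeit with $|A|=4$, in Proposition~\ref{thm:carrotConstruction}; Figure~\ref{fig:minimal_sets} shows further shapes.) Handling the stuck case is where essentially all of the paper's work lies: for an $\ell$-critical island $Q$ with $|Q\cap A|,|Q\cap B|\ge 6$ it first shows each side contributes at most two points to $\partial\conv(Q)$ (Lemma~\ref{lemma:lemma4}), then develops the $a^*$-wedge counting machinery (Lemmas~\ref{lemma:lemma5}--\ref{lemma:lemma14}, several of them computer-checked statements about sets of at most $11$ points) to prove $|Q\cap B|<|Q\cap A|$ and, symmetrically, $|Q\cap A|\le|Q\cap B|$ (Propositions~\ref{proposition:proposition15} and~\ref{proposition:proposition16}), a contradiction. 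Your proposal contains no substitute for this step, and ``a symmetric argument on $B$'' does not help, since in an $\ell$-critical set removal fails on both sides simultaneously.

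There is also a smaller but genuine flaw in the reduction itself: a vertex of $\conv(A)$ outside your Carath\'eodory triangle $T$ need not be a vertex of $\conv(P)$ --- hull vertices of $A$ on the side facing $\ell$ are routinely swallowed by $\conv(A\cup B)$ --- so the asserted ``short separation argument'' does not exist in general, and the triangle-hull case analysis has the same problem. Finally, note that the base case $|A|=|B|=5$ needs no computer at all: Harborth's theorem gives a $5$-hole in any $10$-point set, and since neither $A$ nor $B$ is in convex position the hole must meet both sides, hence is $\ell$-divided. The computer assistance that the theorem actually requires concerns $11$-point statements (e.g.\ $|A|=5$, $|B|=6$ with $A$ not in convex position, Lemma~\ref{lemma:lemma9}, and the wedge lemmas \ref{lemma:lemma10}--\ref{lemma:lemma12}) used inside the analysis of the $\ell$-critical case, not an enumeration of all small instances of the theorem itself.
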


The proof of Theorem~\ref{theorem:theorem2} is computer-assisted. 
We reduce the result to several statements about point sets of size at most 11 and then verify each of these statements by an exhaustive computer search.
To verify the computer-aided proofs we have implemented two independent programs, which, in addition, are based on different abstractions of point sets; see Subsection~\ref{sec:section_computer}.
Some of the tools that we use originate from the bachelor's theses of Scheucher~\cite{scheucher2013,scheucher2014}.

Using a result of Garc\'ia~\cite{g-nnet-12}, we adapt the proof of Theorem~\ref{theorem:theorem1}
to provide improved lower bounds on the minimum numbers of 3-holes and 4-holes.

\begin{thm}\label{theorem:34holes}
The following two bounds are satisfied for every positive integer~$n$:
\begin{enumerate}[(i)]
 \item\label{thm:h_34:item1} $h_3(n) \ge n^2+\Omega(n \log^{2/3} n)$ and
 \item\label{thm:h_34:item2} $h_4(n) \ge \frac{n^2}{2}+\Omega(n \log^{3/4} n)$.
\end{enumerate}
\end{thm}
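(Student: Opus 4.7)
The plan is to adapt the argument behind Theorem~\ref{theorem:theorem1}, replacing the structural role of Theorem~\ref{theorem:theorem2} with analogous results of García~\cite{g-nnet-12} for 3-holes and 4-holes. García shows that, for an $\ell$-divided set $P = A \cup B$ satisfying mild size and non-degeneracy hypotheses, there are $\Omega(|A| + |B|)$ $\ell$-divided 3-holes, and similarly $\Omega(|A| + |B|)$ $\ell$-divided 4-holes. These play, for $k \in \{3,4\}$, the same structural role that Theorem~\ref{theorem:theorem2} plays for $k = 5$: they quantify how many small $\ell$-divided holes must be present once the partition line cuts the point set non-trivially.

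Second, I would feed García's bounds into the recursive partitioning scheme underlying the proof of Theorem~\ref{theorem:theorem1}. That scheme takes as input a per-line lower bound on the number of $\ell$-divided $k$-holes and converts it into a global correction term of the form $\Omega(n \log^{\alpha} n)$, where $\alpha$ is determined by the strength of the per-line count. Starting from the currently best additive bounds $h_3(n) \ge n^2 - O(n)$ and $h_4(n) \ge \frac{n^2}{2} - O(n)$ of Aichholzer et al.~\cite{afhhpv-lbnsc-14}, and plugging in García's per-line estimates, the recursion should produce the additive corrections $\Omega(n \log^{2/3} n)$ and $\Omega(n \log^{3/4} n)$ claimed in the theorem.

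The exponents $2/3$ and $3/4$ should emerge from the same parameter balancing that yields the exponent $4/5$ for 5-holes. Heuristically, smaller values of $k$ correspond to weaker per-line lower bounds on $\ell$-divided $k$-holes, and this loss translates into a smaller exponent on $\log n$; the resulting pattern $(k-1)/k$ for $k \in \{3,4,5\}$ is consistent with a single balancing computation driven by the relevant structural theorem.

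The hard part will be the bookkeeping in the recursion. One must maintain García's non-degeneracy hypotheses---essentially, that neither side of the partitioning line is in convex position and that both sides are sufficiently large---through every recursion level, and one must control the overcount of $k$-holes that are $\ell$-divided by partition lines at multiple levels so that the contributions combine additively into the claimed log exponent. Once these technicalities are handled by the same mechanism used in the 5-hole recursion, the improved lower bounds on $h_3(n)$ and $h_4(n)$ should follow with little further work.
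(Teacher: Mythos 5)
There is a genuine gap, and it sits at the heart of your scheme: a recursion on $h_3$ (or $h_4$) itself cannot reproduce the quadratic term. If you split $P$ by a halving line and recurse, the two halves contribute only $2\cdot(n/2)^2 = n^2/2$ toward the leading term of $h_3$, so to recover $n^2+\Omega(n\log^{2/3}n)$ you would need on the order of $n^2/2$ $\ell$-divided 3-holes across the cut at every level; the per-line bound you attribute to Garc\'ia, $\Omega(|A|+|B|)$ $\ell$-divided 3-holes, is linear and therefore hopelessly too weak for this. Moreover, that is not the result of Garc\'ia the paper uses. The actual ingredient is Garc\'ia's exact identity $h_3(P)=n^2-5n+\gamma(P)+4+h_{3|5}(P)$ (and its 4-hole analogue), where $h_{3|5}(P)$ counts 3-holes \emph{generated} by 5-holes (the leftmost vertex of a 5-hole together with the two vertices not adjacent to it). This identity disposes of the quadratic term exactly and reduces the theorem to proving a superlinear lower bound on the generated-hole counts $h_{3|5}$ and $h_{4|5}$, which are subquadratic quantities that do recurse cleanly, just like the 5-hole count in Theorem~\ref{theorem:theorem1}.

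The second missing piece is that the structural engine remains Theorem~\ref{theorem:theorem2}, not some 3- or 4-hole analogue: in the recursion with $r=\lfloor\log_2^{1/3}n\rfloor$ (resp.\ $\lfloor\log_2^{1/4}n\rfloor$), each island either has a part in convex position, giving an $r$-hole which by Garc\'ia's identity contains $\binom{r}{3}-O(r^2)$ generated 3-holes, or else Theorem~\ref{theorem:theorem2} supplies an $\ell$-divided 5-hole, which \emph{generates} a 3-hole (and 4-holes); one then checks that for at least half of these 5-holes (after possibly reflecting $P$) the generated hole is itself $\ell$-divided, so it is not counted inside either half and the recursion $h_{3|5}(n)\ge 2h_{3|5}(\lfloor n/2\rfloor)+\lfloor n/(4r)\rfloor$ closes. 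Your exponent heuristic $(k-1)/k$ and the balancing of $r$ are correct, but without the generated-hole identity and the reuse of Theorem~\ref{theorem:theorem2} the argument as you describe it does not go through.
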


In the rest of the paper, we assume that every point set $P$ is planar, finite, and in general position. 
We also assume, without loss of generality, that all points in $P$ have distinct \mbox{$x$-coordinates}.
We use $\conv(P)$ to denote the convex hull of~$P$ and $\partial \conv(P)$ to denote the boundary of the convex hull of~$P$.

A subset $Q$ of $P$ that satisfies $P \cap \conv(Q) = Q$ is called
an \emph{island of~$P$}. 
Note that every $k$-hole in an island $Q$ of~$P$ 
is also a $k$-hole in~$P$. 
For any subset $R$ of the plane, 
if $R$ contains no point of~$P$, then we say that $R$ is \emph{empty of points of~$P$}.

In Section~\ref{section_theorem1} we derive quite easily Theorem~\ref{theorem:theorem1} from Theorem~\ref{theorem:theorem2}.
Theorem~\ref{theorem:34holes} is proved in Section~\ref{section_theorem34holes}.
Then, in Section~\ref{section_preliminaries}, we give some preliminaries for the proof of Theorem~\ref{theorem:theorem2}, 
which is presented in Section~\ref{section_theorem2}.
Finally, in Section~\ref{section_final_remarks}, we give some final remarks.
In particular, we show that the assumptions in Theorem~\ref{theorem:theorem2} are necessary.
To provide a better general view, 
we present a flow summary of the proof of Theorem~\ref{theorem:theorem1} in Appendix~\ref{appendix:flow_summary}.

\section{Proof of Theorem~\ref{theorem:theorem1}} 
\label{section_theorem1}

We now apply Theorem~\ref{theorem:theorem2} to obtain a superlinear lower bound on the number of $5$-holes in a given set of $n$ points.
It clearly suffices to prove the statement 
for the case in which $n=2^t$ for some integer $t \geq 5^5$.

We prove by induction on $t \ge 5^5$ that the number of $5$-holes in an arbitrary set $P$ of $n=2^t$ points is at least 
$f(t) \mathrel{\mathop:}= c \cdot 2^t t^{4/5} = c \cdot n \log_2^{4/5}{n}$ 
for some absolute constant $c>0$.
For $t=5^5$, we have  $n>10$ and, by the result of Harborth~\cite{Ha78}, there is at least one $5$-hole in~$P$.
If the constant $c$ is sufficiently small, then  $f(t)=c \cdot n \log_2^{4/5}{n}
\leq 1$ and we have at least $f(t)$ $5$-holes in~$P$, which constitutes our base case.

For the inductive step we assume that $t > 5^5$.
We first partition $P$ with a line $\ell$ into two sets $A$ and $B$ of size $n/2$ each.
Then we further partition $A$ and $B$ into smaller sets using the following well-known lemma, which is, for example, implied by a result of Steiger and Zhao~{\cite[Theorem~1]{steigerZhao2010}}.

\begin{lem}[\cite{steigerZhao2010}]
\label{lemma:lemma3}
Let $P' = A' \cup B'$ be an $\ell$-divided set and let $r$ be a positive integer such that $r \leq |A'|, |B'|$.
Then there is a line that is disjoint from $P'$ and that determines an open halfplane $h$ with $|A' \cap h|=r=|B'  \cap h|$.
\end{lem}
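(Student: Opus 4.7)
My plan is to use a continuous rotation argument combined with the intermediate value theorem. For each direction $\theta \in [0,\pi]$ I consider the unit vector $u_\theta = (\cos\theta, \sin\theta)$ and the scalar projections $\pi_\theta(p) = p \cdot u_\theta$ of the points of $P'$. Let $\alpha_r(\theta)$ denote the $r$-th largest value of $\pi_\theta$ over $A'$, and let $\beta_r(\theta)$ denote the $r$-th largest value over $B'$. Both $\alpha_r$ and $\beta_r$ are continuous functions of $\theta$, being order statistics of continuously varying quantities.

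Next, I compare the two functions at the endpoints of the parameter interval. At $\theta = 0$ the projection is simply the $x$-coordinate; since $\ell$ is vertical with $A'$ to its left and $B'$ to its right, every point of $A'$ has strictly smaller $x$-coordinate than every point of $B'$, and so $\alpha_r(0) < \beta_r(0)$. At $\theta = \pi$ the projection becomes $-x$, so the inequality reverses to $\alpha_r(\pi) > \beta_r(\pi)$. By the intermediate value theorem applied to $\alpha_r - \beta_r$, there is some $\theta^* \in (0,\pi)$ with $\alpha_r(\theta^*) = \beta_r(\theta^*) =: c^*$.

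At such $\theta^*$ the line $L = \{x : \pi_{\theta^*}(x) = c^*\}$ passes through a point $a \in A'$ realizing the $r$-th largest projection in $A'$ and a point $b \in B'$ realizing the $r$-th largest projection in $B'$, while the open halfside $\{\pi_{\theta^*} > c^*\}$ contains exactly $r-1$ points of $A'$ and $r-1$ points of $B'$. I then translate $L$ infinitesimally in direction $-u_{\theta^*}$ to obtain a line $L_\varepsilon = \{\pi_{\theta^*} = c^* - \varepsilon\}$ that avoids $P'$ for small generic $\varepsilon > 0$. The associated open halfplane $\{\pi_{\theta^*} > c^* - \varepsilon\}$ contains those $r-1$ points on each side together with $a$ and $b$, for a total of $r$ points of $A'$ and $r$ points of $B'$, which is exactly the halfplane $h$ promised by the lemma.

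The main obstacle I anticipate lies in handling degenerate configurations at the crossing angle: if two points of $A'$ (or of $B'$) happen to project to the same value at $\theta^*$, then $L$ carries too many points on one side and the post-translation count becomes $r+1$ rather than $r$. Fortunately, the set of angles at which two points of the same side share a projection is finite (each pair of points contributes at most two directions), so I can either refine the intermediate value argument by tracking the overlap of the intervals $(\alpha_{r+1}(\theta), \alpha_r(\theta))$ and $(\beta_{r+1}(\theta), \beta_r(\theta))$ and choosing a crossing outside the finite bad set, or simply perturb $P'$ infinitesimally while preserving its order type, run the argument on the perturbed set, and pass to a limit using the general position of $P'$. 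Either route produces the desired line.
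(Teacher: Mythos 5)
Your argument is correct, and it is genuinely different from what the paper does: the paper gives no proof of this lemma at all, citing it as a special case of Theorem~1 of Steiger and Zhao, whereas you give a short self-contained proof by a rotating-direction/intermediate-value argument. The core of your proof is sound: $\alpha_r$ and $\beta_r$ are continuous, the sign of $\alpha_r-\beta_r$ flips between $\theta=0$ and $\theta=\pi$ because $\ell$ is vertical and separates $A'$ from $B'$, and at a nondegenerate crossing the translated line yields exactly $r$ points of each class in the open halfplane. The one place where you are vague -- the two sketched fixes for the degenerate case -- is in fact unnecessary, and neither sketch is the cleanest route (the perturb-and-take-limits option in particular would need extra care, since open-halfplane counts are not stable under limits). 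The point you missed is that the only ties that matter are ties \emph{at the critical value} $c^*=\alpha_r(\theta^*)=\beta_r(\theta^*)$: ties among projections strictly above $c^*$ do not disturb the count of points with projection greater than $c^*-\varepsilon$. But since $\alpha_r(\theta^*)$ and $\beta_r(\theta^*)$ are themselves attained by some $a\in A'$ and some $b\in B'$, a second point of $A'$ (or of $B'$) projecting to $c^*$ would put three points of $P'$ on the common line $\{\pi_{\theta^*}=c^*\}$, contradicting the general position assumption that is in force throughout the paper (and is part of the setting of an $\ell$-divided set). Hence every zero of $\alpha_r-\beta_r$ is automatically nondegenerate and your first, simple analysis already completes the proof; what your approach buys, compared to the paper's citation, is an elementary and fully explicit argument, at the cost of having to notice this small general-position observation.
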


We set $r \mathrel{\mathop:}= \lfloor \log_2^{1/5}{n}\rfloor$, $s \mathrel{\mathop:}= \lfloor n/(2r)\rfloor$, and apply Lemma~\ref{lemma:lemma3} iteratively in the following way to partition $P$ into islands $P_1,\dots,P_{s+1}$ of $P$ so that for every $i \in \{1,\dots,s\}$, the size of each $P_i \cap A$ and $P_i \cap B$ is exactly~$r$.
Let $P'_0 \mathrel{\mathop:}= P$.
For every $i=1,\dots,s$, we consider a line that is disjoint from $P'_{i-1}$ and that determines an open halfplane $h$ with $|P'_{i-1} \cap A \cap h|=r=|P'_{i-1} \cap B \cap h|$.
Such a line exists by Lemma~\ref{lemma:lemma3} applied to the $\ell$-divided set $P'_{i-1}$.
We then set 
$P_i \mathrel{\mathop:}= P'_{i-1} \cap h$,  $P'_i \mathrel{\mathop:}= P'_{i-1} \setminus P_i$, 
and continue with $i+1$.
Finally, we set $P_{s+1} \mathrel{\mathop:}= P'_s$.

Let $i \in \{1,\dots,s\}$.
If one of the sets $P_i \cap A$ and $P_i \cap B$ is in convex position, 
then there are at least $\binom{r}{5}$ $5$-holes in $P_i$ and, 
since $P_i$ is an island of~$P$, we have at least $\binom{r}{5}$ $5$-holes in~$P$.
If this is the case for at least $s/2$ islands $P_i$, 
then, given that $s = \lfloor n/(2r) \rfloor$ and thus $s/2 \ge \lfloor n/(4r) \rfloor$, we obtain at least $\lfloor n/(4r) \rfloor \binom{r}{5} \geq c \cdot n\log_2^{4/5}{n}$ $5$-holes in~$P$ for a sufficiently small constant $c>0$.

We thus further assume that for more than $s/2$ islands $P_i$, neither of the sets $P_i \cap A$ nor $P_i \cap B$ is in convex position.
Since $r = \lfloor \log_2^{1/5}{n}\rfloor \ge 5$, Theorem~\ref{theorem:theorem2} implies that there is an $\ell$-divided $5$-hole in each such~$P_i$.
Thus there is an $\ell$-divided $5$-hole in $P_i$ for more than $s/2$ islands $P_i$.
Since each $P_i$ is an island of~$P$ and since $s = \lfloor n/(2r)\rfloor$, 
we have more than $s/2 \ge \lfloor n/(4r)\rfloor$ $\ell$-divided 5-holes in~$P$.
As $|A|=|B|=n/2=2^{t-1}$, there are at least $f(t-1)$ $5$-holes in $A$ and at least $f(t-1)$ $5$-holes in~$B$ by the inductive assumption.
Since $A$ and $B$ are separated by the line~$\ell$, we have at least
\[\label{eq:lowerBound}
2f(t-1) +  n/(4r) = 2c (n/2)\log_2^{4/5}{(n/2)} +  n/(4r)
\ge c  n(t-1)^{4/5} +  n/(4t^{1/5})\]
$5$-holes in~$P$.
The right side of the above expression is at least $f(t) = c nt^{4/5}$, because
the inequality $cn(t-1)^{4/5}+ n/(4t^{1/5}) \geq c n t^{4/5}$ is equivalent to the inequality $(t-1)^{4/5}t^{1/5}+1/(4c) \geq  t$, 
which is true if the constant $c$ is sufficiently small, as $(t-1)^{4/5}t^{1/5} \ge t-1$.
This finishes the proof of Theorem~\ref{theorem:theorem1}.

\section{Proof of Theorem~\ref{theorem:34holes}}\label{section_theorem34holes}

In this section we improve the lower bounds on the minimum number of 3-holes and 4-holes.
 To this end we use the notion of generated holes as introduced by Garc\'{i}a~\cite{g-nnet-12}.

Given a 5-hole $H$ in a point set $P$, 
a 3-hole in $P$ is \emph{generated by $H$} 
if it is spanned by the leftmost point $p$ of $H$ and 
the two vertices of $H$ that are not adjacent  to $p$ on the boundary of~$\conv(H)$.
Similarly, a 4-hole in $P$ is \emph{generated by $H$} 
if it is spanned by the vertices of $H$ 
with the exception of one of the points adjacent to the leftmost point of $H$ on the boundary of~$\conv(H)$.
We call a 3-hole or a 4-hole in $P$ \emph{generated} if it is generated by some 5-hole in~$P$.
We denote the number of generated 3-holes and generated 4-holes in $P$
by $h_{3|5}(P)$ and $h_{4|5}(P)$, respectively.
We also denote by $h_{3|5}(n)$ and $h_{4|5}(n)$ 
the minimum of $h_{3|5}(P)$ and $h_{4|5}(P)$, respectively, among all sets $P$ of $n$ points.

For an integer $k \ge 3$ and a point set $P$, let $h_k(P)$ be the number of $k$-holes in $P$.
We say that a point from $P$ is \emph{extremal} in~$P$ 
if it is a vertex of the polygon~$\conv(P)$.
A point from $P$ that is not extremal is \emph{inner} in~$P$.
Garc\'{i}a~\cite{g-nnet-12} proved the following relationships 
between $h_3(P)$ and $h_{3|5}(P)$
and between $h_4(P)$ and $h_{4|5}(P)$.

\begin{thm}[\cite{g-nnet-12}]
\label{theorem:generated34holes_garcia}
Let $P$ be a set of $n$ points and let
$\gamma(P)$ be the number of extremal points of~$P$. 
Then the following two equalities are satisfied:
\begin{enumerate}[(i)] 
\item 
$h_3(P) = n^2-5n+\gamma(P)+4+h_{3|5}(P)$ and
\item 
$h_4(P) = \frac{n^2}{2}-\frac{7n}{2}+\gamma(P)+3+h_{4|5}(P)$.
\end{enumerate}
\end{thm}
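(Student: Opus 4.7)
My plan is to prove both parts by characterizing non-generated holes combinatorially via their leftmost vertex and then counting them; I focus on part (i) since part (ii) is analogous.

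Let $T = \{a, b, c\}$ be a 3-hole with $a$ its leftmost vertex, and label $b, c$ so that $b$ lies angularly above $c$ as seen from $a$. Let $P_a$ denote the set of points of $P$ strictly to the right of $a$, and let $U(a), L(a) \in P_a$ be the angularly highest and lowest points of $P_a$ from $a$'s viewpoint. The key structural claim is that $T$ is generated if and only if there exist $d, e \in P_a$ with $d$ angularly above $b$ and $e$ angularly below $c$ such that the five points $\{a, b, c, d, e\}$ are in convex position with CCW cyclic order $(a, e, c, b, d)$ and the two triangles $\{a, b, d\}$ and $\{a, c, e\}$ are empty; the latter two emptiness conditions, together with the emptiness of $T$, suffice (by the fan triangulation at $a$) for the pentagon $\{a, b, c, d, e\}$ to be an empty convex pentagon, i.e., a 5-hole generating $T$. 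A necessary condition is $b \ne U(a)$ and $c \ne L(a)$, and in the convex-position case $\gamma(P) = n$ it is also sufficient; in general the convex-position requirement on the 5-set introduces an additional subtle obstruction. An analogous statement holds for 4-holes: $Q = \{a, p_1, p_2, p_3\}$ with $a$ leftmost and $p_1, p_2, p_3$ in CCW order around $\conv(Q)$ is generated iff $p_1 \ne U(a)$ or $p_3 \ne L(a)$, equivalently, iff $Q$ can be extended to a 5-hole by inserting a vertex adjacent to $a$.

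With this in hand, I would count non-generated holes by summing local contributions over $a \in P$. Using inclusion-exclusion on the two conditions $b = U(a)$ and $c = L(a)$ for non-generated 3-holes (and their conjunction for non-generated 4-holes), the sum reduces in the convex-position case $\gamma(P) = n$ to the clean identities $(n-2)^2$ and $\binom{n-2}{2}$, which match the right-hand sides of the two formulas after substituting $\gamma(P) = n$. For general $P$, interior points modify the count through the convex-position obstruction described above; the heart of the proof is to show that the net correction is exactly $\gamma(P) - n$ in each of the two formulas, yielding $h_3(P) - h_{3|5}(P) = n^2 - 5n + \gamma(P) + 4$ and $h_4(P) - h_{4|5}(P) = \frac{n^2}{2} - \frac{7n}{2} + \gamma(P) + 3$.

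The main technical obstacle lies in verifying the net correction: the naive angular-extreme characterization becomes insufficient when interior points of $P$ obstruct putative pentagons (for instance, when the candidate vertex $c$ or $b$ lies inside the convex hull of the other four points of $\{a, b, c, d, e\}$), and a careful case analysis is required to show that each such obstruction contributes exactly the right amount to the overall count.
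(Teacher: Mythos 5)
First, a point of reference: the paper does not prove this statement at all --- it is imported verbatim from Garc\'ia~\cite{g-nnet-12} as Theorem~\ref{theorem:generated34holes_garcia}, so there is no internal proof to compare yours against; the benchmark is Garc\'ia's own argument, and what matters here is whether your outline stands on its own.

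It does not, because the central step is asserted rather than proved. Your setup is fine as far as it goes: the fan-triangulation observation correctly shows that a 3-hole $\{a,b,c\}$ with leftmost vertex $a$ is generated if and only if there are points $d,e$ to the right of $a$ making $\{a,e,c,b,d\}$ a convex pentagon with $\triangle(a,b,d)$ and $\triangle(a,c,e)$ empty, and your count in the convex-position case ($\gamma(P)=n$) does come out to $(n-2)^2$ for 3-holes and $\binom{n-2}{2}$ for 4-holes, matching the formulas. But the theorem is an \emph{exact} identity for arbitrary point sets: it says the number of non-generated 3-holes equals $n^2-5n+\gamma(P)+4$ no matter how the interior points are placed. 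Everything difficult lives precisely in the regime you defer: when $b\ne U(a)$ and $c\ne L(a)$ yet every candidate pair $d,e$ fails (a blocking point in $\triangle(a,b,d)$ or $\triangle(a,c,e)$, or loss of convex position), the inclusion--exclusion over the two events $b=U(a)$, $c=L(a)$ no longer counts the non-generated holes, and you give no mechanism --- no bijection, no induction on interior points, no continuous-motion or deletion argument --- for why the totality of these obstructions, summed over all leftmost vertices $a$, contributes exactly $\gamma(P)-n$. That claim \emph{is} the theorem; labelling it ``the heart of the proof'' and ``a careful case analysis'' without carrying it out leaves the proposal as a verified special case plus a correct reformulation, not a proof. (A smaller instance of the same issue: your stated 4-hole criterion ``generated iff $p_1\ne U(a)$ or $p_3\ne L(a)$'' is itself only valid in the convex-position case, for the same blocking reasons.) To close the gap you would need an actual argument that the defect is a hull-dependent invariant, e.g.\ an induction that tracks how both $h_3(P)-h_{3|5}(P)$ and $\gamma(P)$ change when an interior point is removed or moved to the hull --- none of which is sketched.
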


The proofs of both parts of Theorem~\ref{theorem:34holes} are carried out by induction on $n$ 
similarly to the proof of Theorem~\ref{theorem:theorem1}.
The base cases follow from the fact that each set $P$ of $n\ge 10$ points contains at least one 5-hole in $P$
and thus a generated 3-hole in $P$ and a generated 4-hole in~$P$.
For the inductive step,
let $P = A \cup B$ be an $\ell$-divided set of $n$ points with 
$|A|,|B| \ge \left\lfloor \frac{n}{2} \right\rfloor$, 
where $n$ is a sufficiently large positive integer.

To show part~(\ref{thm:h_34:item1}), it suffices to prove $h_{3|5}(P) \ge \Omega(n \log^{2/3} n)$ 
as the statement then follows from Theorem~\ref{theorem:generated34holes_garcia}.
We use the recursive approach from the proof of Theorem~\ref{theorem:theorem1}, 
where we choose $r=\lfloor\log_2^{1/3} n\rfloor$.
In each step of the recursion
we either obtain
$\left\lfloor\frac{n}{4r}\right\rfloor$ pairwise disjoint $r$-holes in~$P$ or
$\left\lfloor\frac{n}{4r}\right\rfloor$ pairwise disjoint $\ell$-divided 5-holes in~$P$.

In the first case, each $r$-hole in~$P$
admits $\binom{r}{3}$ 3-holes in~$P$ and, 
by Theorem~\ref{theorem:generated34holes_garcia}, it contains
$
\binom{r}{3}-r^2+5r-r-4
$
generated 3-holes in~$P$.
Thus, in total, we count at least 
$
\frac{n}{4r}\binom{r}{3}-O(nr) \ge \Omega(n \log^{2/3} n)
$ 
generated 3-holes in~$P$.

In the second case, 
we have at least $\left\lfloor\frac{n}{4r}\right\rfloor$ $\ell$-divided 5-holes in~$P$.
Without loss of generality, 
we can assume that at least $\frac{1}{2}\left\lfloor\frac{n}{4r}\right\rfloor \ge \left\lfloor\frac{n}{8r}\right\rfloor$ 
of those $\ell$-divided 5-holes in~$P$ 
contain at least two points to the right of $\ell$,
as we otherwise continue with the horizontal reflection of~$P$, 
which has $\ell$ as the axis of reflection.
Therefore we have at least $\left\lfloor\frac{n}{8r}\right\rfloor$ $\ell$-divided generated 3-holes in~$P$
and, analogously as in the proof of Theorem~\ref{theorem:theorem1}, 
we obtain 
$$
h_{3|5}(P) \ge 
2h_{3|5} \left(  \left\lfloor \frac{n}{2}\right\rfloor \right)
+\left\lfloor\frac{n}{4r}\right\rfloor \ge \Omega(n \log^{2/3} n).
$$
This finishes the proof of part~(\ref{thm:h_34:item1}).

The proof of part~(\ref{thm:h_34:item2}) is almost identical.
We choose $r=\lfloor\log_2^{1/4} n\rfloor$
and use the facts 
that every $r$-hole in~$P$  contains
$
\binom{r}{4}-\frac{r^2}{2}+\frac{7r}{2}-r-3
$
generated 4-holes in~$P$
and that every $\ell$-divided 5-hole in~$P$ generates two 4-holes in~$P$,
at least one of which is $\ell$-divided.
This finishes the proof of Theorem~\ref{theorem:34holes}.

\section{Preliminaries for the proof of Theorem~\ref{theorem:theorem2}}\label{section_preliminaries}

Before proceeding with the proof of Theorem~\ref{theorem:theorem2}, we first introduce some notation and definitions, and state some immediate observations.

Let $a,b,c$ be three distinct points in the plane.
We denote the line segment spanned by $a$ and $b$ as  $ab$,
the ray starting at $a$ and going through $b$ as $\overrightarrow{ab}$,
and the line through $a$ and $b$ directed from $a$ to $b$ as $\dline{ab}$.
We say $c$ is to the \emph{left} (\emph{right}) of $\dline{ab}$ 
if the triple $(a,b,c)$ traced in this order 
is oriented counterclockwise (clockwise).
Note that $c$ is to the left of $\dline{ab}$ if and only if $c$ is to the right of $\dline{ba}$,
and that the triples $(a,b,c)$, $(b,c,a)$, and $(c,a,b)$ have the same orientation.
We say a point set $S$ is to the \emph{left} (\emph{right}) of $\dline{ab}$
if every point of $S$ is to the left (right) of $\dline{ab}$.

\paragraph*{Sectors of polygons}

For an integer $k \geq 3$, let $\mathcal{P}$ be a convex polygon 
with vertices $p_1,p_2,\ldots,p_k$ 
traced counterclockwise in this order.
We denote by $S(p_1,p_2,\ldots,p_k)$ the open convex region to the left of
each of the three lines $\dline{p_1p_2}$, $\dline{p_1p_k}$, and $\dline{p_{k-1}p_{k}}$.
We call the region $S(p_1,p_2,\ldots,p_k)$ a \emph{sector} of $\mathcal{P}$.
Note that every convex $k$-gon defines exactly $k$ sectors.
Figure~\ref{fig:sectors_wedges}(\subref{fig:sectors_wedges_1}) gives an illustration.

\begin{figure}[htb]
	\hfill
	\begin{subfigure}[t]{.35\textwidth}\centering
		\includegraphics[page=1,width=\textwidth]{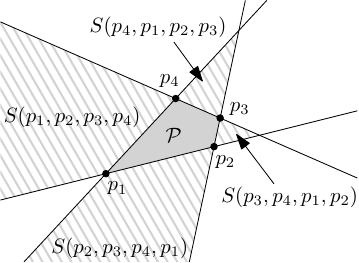}
		\caption{}
		\label{fig:sectors_wedges_1}
	\end{subfigure}
	\hfill
	\begin{subfigure}[t]{.3\textwidth}\centering
		\includegraphics[page=2,width=\textwidth]{sectors_wedges}
		\caption{}
		\label{fig:sectors_wedges_2}
	\end{subfigure}
	\hfill
	\begin{subfigure}[t]{.3\textwidth}\centering
		\includegraphics[page=3,width=\textwidth]{sectors_wedges}
		\caption{}
		\label{fig:sectors_wedges_3}
	\end{subfigure}
	\hfill
	\caption{
	(\subref{fig:sectors_wedges_1})~An example of sectors.
	(\subref{fig:sectors_wedges_2})~An example of \mbox{$a^*$-wedges} with $t=|A|-1$.
	(\subref{fig:sectors_wedges_3})~An example of \mbox{$a^*$-wedges} with $t<|A|-1$.
	}
	\label{fig:sectors_wedges}
\end{figure}

We use $\triangle(p_1,p_2,p_3)$ to denote the closed triangle with vertices $p_1,p_2,p_3$.
We also use $\square(p_1,p_2,p_3,p_4)$ to denote the closed quadrilateral with vertices $p_1,p_2,p_3,p_4$ traced in the counterclockwise order along the boundary. 

The following simple observation summarizes some properties of sectors of polygons.
\begin{obs}\label{observation:observation1}
Let $P = A \cup B$  be an $\ell$-divided 
set with no $\ell$-divided 5-hole in~$P$.
Then the following conditions are satisfied.
\begin{enumerate}[(i)]
\item \label{observation:observation1_item1} 
	Every sector of an $\ell$-divided 4-hole in $P$ is empty of points of~$P$.
\item \label{observation:observation1_item2} 
	If $S$ is a sector of a 4-hole in~$A$
	and $S$ is empty of points of~$A$, 
	then $S$ is empty of points of~$B$.
\end{enumerate}
\end{obs}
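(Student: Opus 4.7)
The plan is to prove both parts by contradiction, exhibiting an $\ell$-divided 5-hole in~$P$ from a hypothetical offending point in the sector. The central trick, used in both parts, is that if $q$ is the offending point and $p_1,p_4$ denote the two vertices of the underlying 4-gon that lie on the boundary of $S=S(p_1,p_2,p_3,p_4)$, then choosing $q$ to minimize the distance to the line $\dline{p_1p_4}$ guarantees that the interior of $\triangle(p_1,p_4,q)$ is empty of~$P$. Attaching $q$ to the 4-gon across the edge $p_4p_1$ then produces a convex pentagon whose interior will turn out to be empty of~$P$.

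For part~(\ref{observation:observation1_item1}), let $H=\{p_1,\dots,p_4\}$ be an $\ell$-divided 4-hole in~$P$ traced CCW, and suppose $q\in S\cap P$ is chosen as above. The three sector inequalities are exactly what is needed to check that inserting $q$ between $p_4$ and $p_1$ in the cyclic order yields a convex pentagon $K$. Its interior decomposes into the interior of $H$, the interior of $\triangle(p_1,p_4,q)$, and the open segment $p_1p_4$. The first set is empty of $P$ because $H$ is a 4-hole; the second lies in the open sector $S$ (by convexity of $S$, together with $q\in S$ and $p_1,p_4\in\partial S$), so any further $P$-point inside it would contradict the minimal choice of $q$; the third is empty of $P$ by general position. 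Since $K$ inherits $\ell$-dividedness from $H$, it is the sought $\ell$-divided 5-hole.

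For part~(\ref{observation:observation1_item2}), I would run the same construction with a 4-hole $H'\subset A$ in~$A$ whose sector $S$ is empty of $A$, and with a hypothetical $q\in S\cap B$ chosen to minimize distance to $\dline{p_1p_4}$ among $B\cap S$. The triangle is handled exactly as before: its interior lies in $S$, hence is empty of $A$ by hypothesis and of $B$ by the minimal choice of~$q$. The only new concern is that the interior of $H'$ might still contain points of~$B$, since $H'$ is only a 4-hole in~$A$. This is where the $\ell$-division becomes essential: because $H'\subseteq A$ lies entirely in the closed halfplane on the $A$-side of~$\ell$, so does $\conv(H')$, which is therefore disjoint from~$B$. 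Combined with $H'$ being a 4-hole in~$A$, this forces the interior of $H'$ to be empty of~$P$, so $H'\cup\{q\}$ is an $\ell$-divided 5-hole in~$P$, contradicting the hypothesis. The key step beyond part~(\ref{observation:observation1_item1}) is thus this halfplane observation, which lets the 4-hole in~$A$ play the role of a 4-hole in~$P$.
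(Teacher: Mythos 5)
Your proposal is correct: choosing the offending point in the sector closest to the line through the two sector-edge vertices, attaching it across that edge to get a convex pentagon, and decomposing its interior into the 4-hole, the triangle, and the open diagonal is exactly the standard justification, and your extra halfplane remark in part~(ii) (that a 4-hole in~$A$ is automatically a 4-hole in~$P$ because $\conv(H')$ lies strictly on the $A$-side of~$\ell$) correctly supplies the only point where the two parts differ. The paper states this observation without proof, treating it as immediate, and your argument is the intended one.
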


\paragraph*{\boldmath{$\ell$}-critical sets and islands}

An $\ell$-divided set $C = A \cup B$ 
is called \emph{$\ell$-critical} if
it fulfills the following two conditions.
\begin{enumerate}[(i)]
	\item 
	Neither $A$ nor $B$ is in convex position.
	\item 
	For every extremal point $x$ of~$C$, 
	one of the sets $(C\setminus \{x\}) \cap A$ and 
	$(C\setminus \{x\}) \cap B$ is in convex position.
\end{enumerate}
	
Note that every $\ell$-critical set $C = A \cup B$ contains 
at least four points in each of $A$ and~$B$.
Figure~\ref{fig:minimal_sets} shows some examples of $\ell$-critical sets.
If $P = A \cup B$ is an $\ell$-divided set with neither $A$ nor $B$ in convex position,
then there exists an $\ell$-critical island of~$P$.
This can be seen by iteratively removing extremal points so that none of the parts is in convex position after the removal.

\begin{figure}[htb]
	\hfill
	\begin{subfigure}[t]{.24\textwidth}\centering
		\includegraphics[page=1,width=\textwidth]{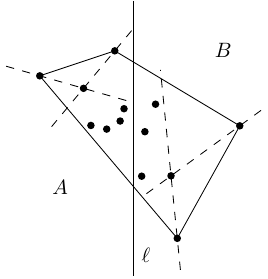}
		\caption{}
		\label{fig:minimal_sets_A}
	\end{subfigure}
	\hfill
	\begin{subfigure}[t]{.24\textwidth}\centering
		\includegraphics[page=2,width=\textwidth]{minimal_sets}
		\caption{}
		\label{fig:minimal_sets_B}
	\end{subfigure}
	\hfill
	\begin{subfigure}[t]{.24\textwidth}\centering
		\includegraphics[page=3,width=\textwidth]{minimal_sets}
		\caption{}
		\label{fig:minimal_sets_C}
	\end{subfigure}
	\hfill
	\begin{subfigure}[t]{.24\textwidth}\centering
		\includegraphics[page=4,width=\textwidth]{minimal_sets}
		\caption{}
		\label{fig:minimal_sets_D}
	\end{subfigure}
	\hfill
	\caption{Examples of $\ell$-critical sets.}
	\label{fig:minimal_sets}
\end{figure}

\paragraph*{\boldmath{$a$}-wedges and \boldmath{$\astar$}-wedges}

Let $P = A \cup B$ be an $\ell$-divided set.
For a point $a$ in~$A$, 
the rays $\overrightarrow{a a'}$ for all $a' \in A \setminus \{a\}$ 
partition the plane 
into $|A|-1$ regions.
We call the closures of those regions \emph{$a$-wedges}
and 
label them as $W^{(a)}_1,\ldots,W^{(a)}_{|A|-1}$ in the clockwise order around~$a$,
where $W^{(a)}_1$ is the topmost $a$-wedge that intersects~$\ell$.
Let $t^{(a)}$ be the number of $a$-wedges that intersect~$\ell$.
Note that 
$W^{(a)}_1,\ldots,W^{(a)}_{t^{(a)}}$ are the $a$-wedges that intersect $\ell$
sorted in top-to-bottom order on~$\ell$.
Also note that all $a$-wedges are convex if $a$ is an inner point of~$A$, 
and that there exists exactly one non-convex $a$-wedge otherwise.
The indices of the $a$-wedges are considered modulo $|A|-1$.
In particular, $W_{0}^{(a)} = W_{|A|-1}^{(a)}$
and $W_{|A|}^{(a)} = W_{1}^{(a)}$.

If $A$ is not in convex position, 
we denote the rightmost inner point of $A$ as $\astar$
and write $t\mathrel{\mathop:}=t^{(\astar)}$ and 
$W_k \mathrel{\mathop:}= W^{(\astar)}_k$ for $k=1,\ldots,|A|-1$.
Recall that $\astar$ is unique, 
since all points have distinct $x$-coordinates.
Figures~\ref{fig:sectors_wedges}(\subref{fig:sectors_wedges_2}) and \ref{fig:sectors_wedges}(\subref{fig:sectors_wedges_3}) give an illustration.

We set
$w_k \mathrel{\mathop:}= |B \cap W_k|$
and label the points of $A$ 
so that $W_k$ is bounded by the rays $\overrightarrow{\astar a_{k-1}}$ 
and $\overrightarrow{\astar a_{k}}$ for $k=1,\ldots,|A|-1$.
Again, the indices are considered modulo $|A|-1$.
In particular, $a_0 = a_{|A|-1}$ and $a_{|A|}=a_1$.

\begin{obs}\label{observation:observation3}
Let $P = A \cup B$ be an $\ell$-divided set with $A$ not in convex position.
Then the points $a_1,\ldots,a_{t-1}$ lie to the right of~$\astar$
and 
the points $a_t,\ldots,a_{|A|-1}$ lie to the left of~$\astar$.
\end{obs}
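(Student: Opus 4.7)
The plan is to reduce the observation to one elementary angular fact. Since $\ell$ is vertical and $A$ lies entirely to the left of $\ell$, the point $\astar$ itself lies strictly to the left of $\ell$. A ray $\overrightarrow{\astar a'}$ with $a' \in A \setminus \{\astar\}$ therefore crosses $\ell$ if and only if the direction from $\astar$ to $a'$ has positive $x$-component, which is exactly the condition that $a'$ lies to the right of $\astar$. So the ``$\ell$-hitting'' rays from $\astar$ are precisely the rays to the points of $A$ that lie to the right of $\astar$; let $m$ denote the number of such points.

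Because $\astar$ is an inner point of $A$, the rays $\overrightarrow{\astar a'}$ span every direction around $\astar$ (in particular $m \ge 1$ and $|A|-1-m \ge 1$), and every $\astar$-wedge is convex, i.e., has angular span less than $\pi$. Since ``right-pointing'' corresponds to an open angular half-plane, the right-pointing rays form one contiguous block in the clockwise cyclic order around $\astar$. I would then identify the wedges hitting $\ell$: any wedge with both bounding rays left-pointing lies in the left half-plane and misses $\ell$; so the $\ell$-hitting wedges are exactly the $m-1$ wedges sandwiched between two consecutive right-pointing rays (each producing a bounded segment on $\ell$) together with the two ``boundary'' wedges that contain the straight-up and straight-down directions (each having one right-pointing and one left-pointing bounding ray, and each yielding an $\ell$-intersection that is an infinite ray reaching $+\infty$ or $-\infty$ on $\ell$, respectively). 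This gives $t = m+1$ and simultaneously locates the top and bottom of the column of $\ell$-hitting wedges.

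To finish, the top boundary wedge has $\ell$-intersection reaching $+\infty$, so it is strictly above every other $\ell$-hitting wedge and, by the labeling convention, equals $W_1$. Going clockwise from $W_1$, the $t-1 = m$ rays separating consecutive wedges $W_1, W_2, \dots, W_t$ are exactly the $m$ right-pointing rays, which by the indexing are $\overrightarrow{\astar a_1}, \ldots, \overrightarrow{\astar a_{t-1}}$; converting back from rays to points, $a_1, \ldots, a_{t-1}$ lie to the right of $\astar$. All the remaining rays $\overrightarrow{\astar a_t}, \ldots, \overrightarrow{\astar a_{|A|-1}}$ are left-pointing (each either bounds the bottom side of $W_t$ or separates two wedges that both miss $\ell$), so $a_t, \ldots, a_{|A|-1}$ lie to the left of $\astar$.

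The only step requiring any care is the count $t = m+1$: one must use the convexity of the $\astar$-wedges (angular span less than $\pi$) to verify both that the two boundary wedges really do reach $\ell$ through their infinite $\ell$-rays and that no wedge bounded by two left-pointing rays can reach $\ell$. Everything else is direct bookkeeping along the clockwise order.
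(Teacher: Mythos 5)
Your argument is correct: since $a^*$ lies strictly to the left of the vertical line $\ell$ and all $a^*$-wedges are convex with angular span less than $\pi$, a wedge meets $\ell$ exactly when at least one of its bounding rays points to the right of $a^*$, and your bookkeeping of the contiguous clockwise block of right-pointing rays (with the two boundary wedges containing the vertical directions, the upper one being $W_1$) yields exactly the claimed split of the indices. The paper states this observation without proof, and your verification matches the intended elementary reasoning; the only cosmetic slip is in your final parenthetical, where the ray $\overrightarrow{a^*a_{|A|-1}}$ bounds the top side of $W_1$ rather than falling under either of the two cases you list, but your complement argument (every remaining ray is left-pointing because it is not right-pointing) covers it regardless.
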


\section{Proof of Theorem~\ref{theorem:theorem2}}\label{section_convex}
\label{section_theorem2}

First, we give a high-level overview of the main ideas of the proof of Theorem~\ref{theorem:theorem2}.
We proceed by contradiction and we suppose that there is no $\ell$-divided 5-hole in a given $\ell$-divided set $P=A \cup B$ with $|A|,|B| \ge 5$ and with neither $A$ nor $B$ in convex position.
If $|A|, |B| = 5$, 
then the statement follows from the result of Harborth~\cite{Ha78}.
Thus we assume that $|A| \ge 6$ or $|B| \ge 6$.
We reduce $P$ to an island $Q$ of $P$
by iteratively removing points from the convex hull
until one of the two parts $Q \cap A$ and $Q \cap B$ contains exactly five points
or $Q$ is $\ell$-critical with $|Q \cap A|,|Q \cap B| \ge 6$.
If $|Q \cap A| = 5$ and $|Q \cap B| \ge 6$ or vice versa, then we reduce $Q$ to an island of~$Q$ with eleven points and, using a computer-aided result (Lemma~\ref{lemma:lemma9}), we show that there is an $\ell$-divided 5-hole in that island and hence in~$P$.
If $Q$ is $\ell$-critical with $|Q \cap A|,|Q \cap B| \ge 6$, 
then we show that $|A \cap \partial \conv(Q)|,|B \cap \partial \conv(Q)| \le 2$ and that, if $|A \cap \partial \conv(Q)|=2$, 
then $a^*$ is the only inner point of $Q \cap A$ and similarly for $B$ (Lemma~\ref{lemma:lemma4}).
Without loss of generality, we assume that $|A \cap \partial \conv(Q)|=2$ and thus $a^*$ is the only inner point of $Q \cap A$.
Using this assumption, we prove that $|Q \cap B| < |Q \cap A|$ (Proposition~\ref{proposition:proposition15}).
By exchanging the roles of $Q \cap A$ and $Q \cap B$, we obtain $|Q\cap A| \le |Q \cap B|$ (Proposition~\ref{proposition:proposition16}), which gives a contradiction. 

To prove that $|Q \cap B| < |Q \cap A|$, 
we use three results about the sizes of the parameters $w_1,\dots,w_t$ for the $\ell$-divided set~$Q$,
that is, about the numbers of points of $Q \cap B$ in the \mbox{$a^*$-wedges} $W_1,\dots,W_t$ of~$Q$.
We show that if we have $w_i=2=w_j$ for some $1 \leq i < j \leq t$, then $w_k=0$ for some $k$ with $i<k<j$ (Lemma~\ref{lemma:lemma7}).
Further, for any three or four consecutive \mbox{$a^*$-wedges} whose union is convex and contains at least four points of $Q \cap B$, each of those \mbox{$a^*$-wedges} contains at most two such points (Lemma~\ref{lemma:lemma13}). 
Finally, we show that $w_1,\dots,w_t \leq 3$ (Lemma~\ref{lemma:lemma14}).
The proofs of Lemmas~\ref{lemma:lemma13} and~\ref{lemma:lemma14} rely on some results about small $\ell$-divided sets with computer-aided proofs (Lemmas~\ref{lemma:lemma10}, \ref{lemma:lemma11}, and \ref{lemma:lemma12}).
Altogether, this is sufficient to show that $|Q \cap B| < |Q \cap A|$.

We now start the proof of Theorem~\ref{theorem:theorem2} by showing that
if there is an $\ell$-divided 5-hole in the intersection of $P$ with a union of consecutive \mbox{$a^*$-wedges}, then there is an $\ell$-divided 5-hole in~$P$.

\begin{lem}\label{lemma:lemma5}
Let $P = A \cup B$ be an $\ell$-divided set 
with $A$ not in convex position.
For integers $i,j$ with $1\le i \le j \le t$,
let $W \mathrel{\mathop:}= \bigcup_{k=i}^j W_k$ and $Q \mathrel{\mathop:}= P \cap W$.
If there is an $\ell$-divided 5-hole in~$Q$,
then there is an $\ell$-divided 5-hole in~$P$.
\end{lem}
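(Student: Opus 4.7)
The plan is to show that the given $\ell$-divided 5-hole $H$ in $Q$ is itself an $\ell$-divided 5-hole in $P$. Since $H \subseteq Q \subseteq P$ and $H$ is already $\ell$-divided, it suffices to check that $\conv(H)$ contains no points of $P \setminus Q = P \setminus W$, and this will follow from $\conv(H) \subseteq W$.

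The apex $a^*$ lies on the boundary of every $a^*$-wedge, so $a^* \in W \cap A \subseteq Q$; therefore $a^*$ cannot lie in $\conv(H) \setminus H$, as that would violate $H$ being a 5-hole in $Q$. Thus either $a^* \in H$ or $a^* \notin \conv(H)$. In either case I would construct a convex wedge $V$ with apex $a^*$ and opening angle strictly less than $180^\circ$ such that $H \subseteq V$, which yields $\conv(H) \subseteq V$. If $a^* \in H$, take $V$ to be the interior wedge of the convex pentagon $\conv(H)$ at the vertex $a^*$: its angle is $<180^\circ$ by convexity, and the remaining four vertices of $H$ lie inside $V$. If $a^* \notin \conv(H)$, a line through $a^*$ strictly separates $a^*$ from $\conv(H)$, placing all five vertices of $H$ in an open halfplane and hence in a convex wedge at $a^*$ of angle less than $180^\circ$.

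The final step is to verify $V \subseteq W$. The two boundary rays of $V$ pass through the two angularly extreme vertices of $H$ as seen from $a^*$, while at least two of the remaining vertices of $H$ have directions from $a^*$ strictly inside the $V$-arc. The set $W$, being a union of consecutive $a^*$-wedges, corresponds at $a^*$ to a single contiguous arc of directions that contains $\overrightarrow{a^* h}$ for every $h \in H$. A contiguous directional arc containing the two extremes together with an intermediate direction must contain the whole $V$-arc, since otherwise it would have to wrap the other way around, and a proper wedge (not the entire plane) cannot do so while still containing the intermediate direction. Hence $V \subseteq W$, and consequently $\conv(H) \subseteq W$, completing the proof.

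The main subtlety is the reflex case, where $W$ spans more than $180^\circ$ at $a^*$ (for example when $i=1$ and $j=t$). Then the shorter arc between two directions of $W$ need not lie in $W$ in general, because it might cross the complementary convex wedge at $a^*$; so $\conv(H) \subseteq W$ is not automatic from $H \subseteq W$ alone. The intermediate vertices of $H$ strictly inside $V$, which still lie in $W$, are precisely what pins down the correct wrapping direction and forces $V \subseteq W$ in this case.
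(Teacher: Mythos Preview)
Your argument has a genuine gap in the reflex case. The claim that the intermediate vertices of $H$ ``pin down the correct wrapping direction and force $V \subseteq W$'' is false. Suppose $W$ spans the directional arc $[0^\circ,200^\circ]$ at $a^*$, and $H$ consists of $a_{i-1}$ (at direction $0^\circ$, on the boundary of~$W$) together with four points of $Q$ at directions $190^\circ,192^\circ,195^\circ,198^\circ$. All five directions lie in~$W$. The smallest convex wedge $V$ at $a^*$ containing $H$ is the arc $[190^\circ,360^\circ]$ of opening $170^\circ$; its boundary directions are $190^\circ$ and $0^\circ$, and the ``intermediate'' directions $192^\circ,195^\circ,198^\circ$ lie strictly inside~$V$. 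But these intermediate directions also lie in $W=[0^\circ,200^\circ]$, precisely because they all cluster near one of the extremes; they do nothing to rule out $W$ going the other way around. Indeed $270^\circ \in V \setminus W$, so $V \not\subseteq W$ and hence $\conv(H) \not\subseteq W$.

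This is not merely a gap in the reasoning but a failure of the overall strategy: when $\conv(H)$ crosses one boundary ray of~$W$, it can genuinely contain points of $P\setminus Q$ lying in the triangle $\triangle(a^*,a_{i-1},a_j)\subseteq\mathbb{R}^2\setminus W$, so $H$ itself need not be a $5$-hole in~$P$. The paper's proof handles exactly this situation: it shows that then $a_{i-1}\in H$ (respectively $a_j\in H$), that the remaining four vertices of $H$ lie in a controlled region, and it \emph{replaces} $a_{i-1}$ by a point $p'\in P\setminus Q$ inside $\triangle(a^*,a_{i-1},a_j)$ closest to $\overline{a_j a^*}$, obtaining a new $\ell$-divided $5$-hole $H'$ that is empty in all of~$P$. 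Any correct proof must include such a repair step; the purely directional argument cannot suffice.
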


\begin{proof}
If $W$ is convex then $Q$ is an island of~$P$ and the statement immediately follows.
Hence we assume that $W$ is not convex.
The region $W$ is bounded by the rays $\overrightarrow{\astar a_{i-1}}$ and $\overrightarrow{\astar a_j}$
and all points of $P \setminus Q$ lie in the 
convex region $\mathbb{R}^2 \setminus W$;
see Figure~\ref{fig:dividedHolesInWedgeUnions}.

\begin{figure}[htb]
	\hfill
	\begin{subfigure}[t]{.3\textwidth}\centering
		\includegraphics[page=1,width=\textwidth]{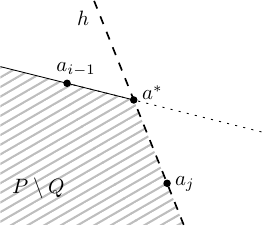}
		\caption{}
		\label{fig:dividedHolesInWedgeUnions1}
	\end{subfigure}
	\hfill
	\begin{subfigure}[t]{.3\textwidth}\centering
		\includegraphics[page=2,width=\textwidth]{partitionedHolesInWedgeUnions}
		\caption{}
		\label{fig:dividedHolesInWedgeUnions2}
	\end{subfigure}
	\hfill
	\begin{subfigure}[t]{.3\textwidth}\centering
		\includegraphics[page=3,width=\textwidth]{partitionedHolesInWedgeUnions}
		\caption{}
		\label{fig:dividedHolesInWedgeUnions3}
	\end{subfigure}
	\hfill
	\caption{Illustration of the proof of Lemma~\ref{lemma:lemma5}. 
	(\subref{fig:dividedHolesInWedgeUnions1})~The point $a_j$ is to the right of $a^*$. 
	(\subref{fig:dividedHolesInWedgeUnions2})~The point $a_j$ is to the left of $a^*$. 
	(\subref{fig:dividedHolesInWedgeUnions3})~The hole $H$ properly intersects the ray $\protect\overrightarrow{a^*a_j}$. 
	The boundary of the convex hull of $H$ is drawn red and the convex hull of $H'$ is drawn blue.}
	\label{fig:dividedHolesInWedgeUnions}
\end{figure}

Since $W$ is non-convex and 
every $a^*$-wedge contained in $W$ intersects~$\ell$, 
at least one of the points $a_{i-1}$ and $a_j$ lies to the left of $a^*$.
Moreover, the points $a_i,\ldots,a_{j-1}$ are to the right of $a^*$ by Observation~\ref{observation:observation3}.
Without loss of generality, we assume that $a_{i-1}$ is to the left of $a^*$,
as otherwise we consider the vertical reflection of the whole point set~$P$.

If $a_j$ is to the left of $a^*$,
then we let $h$ be the closed halfplane determined by the vertical line through $a^*$
such that $a_{i-1}$ and $a_j$ lie in $h$.
Otherwise, if $a_j$ is to the right of $a^*$,
then we let $h$ be the closed halfplane determined by the line $\overline{a^*a_{j}}$
such that $a_{i-1}$ lies in $h$.
In either case, $h \cap A \cap Q = \{ a^*, a_{i-1}, a_j\}$.

Let $H$ be an $\ell$-divided 5-hole in~$Q$.
We say that $H$ \emph{properly intersects} a ray~$r$
if the interior of $\conv(H)$ intersects~$r$.
Now we show that if $H$ properly intersects the ray $\overrightarrow{a^* a_j}$,
then $H$ contains $a_{i-1}$.
Assume there are points $p,q\in H$ such that 
the relative interior of $pq$ intersects $r \mathrel{\mathop:}= \overrightarrow{a^* a_j}$.
Since $r$ lies in $h$ and neither of $p$ and $q$ lies in~$r$, 
at least one of the points $p$ and $q$ lies in $h \setminus r$.
Without loss of generality, we assume $p \in h \setminus r$.
From $h \cap A \cap Q = \{ a^*, a_{i-1}, a_j\}$ we have $p = a_{i-1}$.
By symmetry, 
if $H$ properly intersects the ray $\overrightarrow{a^* a_{i-1}}$,
then $H$ contains $a_j$.

Suppose for contradiction 
that $H$ properly intersects both rays 
$\overrightarrow{\astar a_{i-1}}$ and $ \overrightarrow{\astar a_j}$.
Then $H$ contains the points $a_{i-1},a_j,x,y,z$ for some points $x,y,z \in Q$, 
where $a_{i-1} x$ intersects $\overrightarrow{a^* a_j}$, and 
$a_j z$ intersects $\overrightarrow{a^* a_{i-1}}$.
Observe that $z$ is to the left of $\dline{a_{i-1} \astar}$ and
that $x$ is to the right of $\dline{a_j \astar}$.
If $a_j$ lies to the right of~$a^*$,
then $z$ is to the left of $a^*$, and thus $z$ is in~$A$; see Figure~\ref{fig:dividedHolesInWedgeUnions}(\subref{fig:dividedHolesInWedgeUnions1}).
However, this is impossible as $z$ also lies in~$h$.
Hence, $a_j$ lies to the left of~$a^*$; 
see Figure~\ref{fig:dividedHolesInWedgeUnions}(\subref{fig:dividedHolesInWedgeUnions2}).
As $x$ and $z$ are both to the right of $\astar$, 
the point $\astar$ is inside the convex quadrilateral $\square(a_{i-1},a_j,x,z)$.
This contradicts the assumption that $H$ is a 5-hole in~$Q$.

So assume that $H$ properly intersects exactly one of the rays
$\overrightarrow{\astar a_{i-1}}$ and $\overrightarrow{\astar a_j}$, 
say $\overrightarrow{\astar a_{j}}$; 
see Figure~\ref{fig:dividedHolesInWedgeUnions}(\subref{fig:dividedHolesInWedgeUnions3}).
In this case, $H$ contains~$a_{i-1}$.
The interior of the triangle $\triangle(\astar,a_{i-1},a_j)$ is empty of points of~$Q$,
since the triangle is contained in~$h$.
Moreover, $\conv(H)$ cannot intersect the line that determines $h$ both strictly above and strictly below~$a^*$.
Thus, all remaining points of $H \setminus \{ a_{i-1} \}$ 
lie to the right of $\dline{ a_{i-1} \astar}$ and to the right of $\dline{ a_j \astar}$.
If $H$ is empty of points of $P \setminus Q$, we are done.
Otherwise, we let $H' \mathrel{\mathop:}= (H  \setminus \{a_{i-1}\})\cup \{p'\}$ 
where $p' \in P \setminus Q$ is a point 
inside $\triangle(\astar,a_{i-1},a_j)$ closest to $\overline {a_j \astar}$.
Note that the point $p'$ might not be unique.
By construction, $H'$ is an $\ell$-divided 5-hole in~$P$.
An analogous argument shows that there is an $\ell$-divided 5-hole in $P$ if $H$ properly intersects $\overrightarrow{\astar a_{i-1}}$.

Finally, if $H$ does not properly intersect any of the rays 
$\overrightarrow{\astar a_{i-1}}$ and $ \overrightarrow{\astar a_j}$, 
then $\conv(H)$ contains no point of $P \setminus Q$ in its interior,
and hence $H$ is an $\ell$-divided 5-hole in~$P$.
\end{proof}

\subsection{Sequences of \texorpdfstring{\boldmath{$a^*$}}{a*}-wedges with at most two points of~\texorpdfstring{\boldmath{$B$}}{B}}
\label{section_212}

In this subsection we consider an $\ell$-divided set $P = A \cup B$ with $A$ not in convex position.
We consider the union $W$ of consecutive \mbox{$\astar$-wedges},
each containing at most two points of~$B$, and 
derive an upper bound on the number of points of $B$ that lie in $W$
if there is no $\ell$-divided 5-hole in $P \cap W$; see Corollary~\ref{corollary:corollary8}.

\begin{obs}\label{observation:observation4}
Let $P = A \cup B$ be an $\ell$-divided set with $A$ not in convex position. 
Let $W_k$ be an $\astar$-wedge with $w_k \ge 1$ and $1 \le k \le t$
and let $b$ be the leftmost point in $W_k \cap B$.
Then the points $\astar$, $a_{k-1}$, $b$, and $a_k$ form an $\ell$-divided 4-hole in~$P$.
\end{obs}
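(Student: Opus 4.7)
I plan to verify that $\{\astar, a_{k-1}, b, a_k\}$ forms an $\ell$-divided 4-hole in~$P$ by checking three things in turn: the $\ell$-partition of the vertex set, convex position, and emptiness of the convex hull. The first is immediate: $\astar, a_{k-1}, a_k \in A$ lie strictly left of~$\ell$, while $b \in B$ lies strictly right of~$\ell$, so $\ell$ separates three of the vertices from the fourth.

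For convex position, I would use that $W_k$ is a convex wedge (because $\astar$ is an inner point of~$A$, every $\astar$-wedge is convex) bounded by the rays $\overrightarrow{\astar a_{k-1}}$ and $\overrightarrow{\astar a_k}$. Since $b \in W_k$, the point $b$ lies on the same side of $\overline{\astar a_{k-1}}$ as $a_k$, and on the same side of $\overline{\astar a_k}$ as~$a_{k-1}$. At the same time, the triangle $\triangle(\astar, a_{k-1}, a_k)$ is contained in the open halfplane strictly left of~$\ell$, while $b$ lies strictly right of~$\ell$, so $b$ is not in this triangle. The only portion of the convex wedge $W_k$ that lies outside $\triangle(\astar, a_{k-1}, a_k)$ is the (unbounded) region on the side of the chord $\overline{a_{k-1} a_k}$ opposite to~$\astar$. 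Consequently $\astar, a_{k-1}, b, a_k$ are the vertices of a convex quadrilateral taken in this cyclic order.

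For emptiness, I would split the quadrilateral along the diagonal $a_{k-1} a_k$ into $\triangle(\astar, a_{k-1}, a_k)$ and $\triangle(a_{k-1}, b, a_k)$; general position excludes points of $P$ on the shared diagonal, so it suffices to check the two open triangles. By the definition of $\astar$-wedges, the open region $W_k$ contains no point of~$A$, hence neither triangle's interior contains a point of $A\setminus\{\astar, a_{k-1}, a_k\}$. The first triangle lies strictly left of~$\ell$, so it contains no point of~$B$ either. For the second triangle, note that $x(b) > \max(x(a_{k-1}), x(a_k))$, because $b$ lies right of~$\ell$ while $a_{k-1}, a_k$ lie left; therefore every interior point of $\triangle(a_{k-1}, b, a_k)$ has $x$-coordinate strictly smaller than~$x(b)$. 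Because $\triangle(a_{k-1}, b, a_k)$ is contained in the convex wedge~$W_k$, any $b' \in B$ in its interior would belong to $W_k \cap B$ with $x(b') < x(b)$, contradicting the choice of $b$ as the leftmost point of $W_k \cap B$. This last step is the only place where the leftmost property of~$b$ is genuinely used, and I expect it to be the main (though small) obstacle in the argument; everything else is a direct consequence of the emptiness properties of $\astar$-wedges and the orientation of~$\ell$.
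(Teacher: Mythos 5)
Your proposal is correct, and it follows the intended reasoning: the paper states this as an immediate observation without proof, and your verification (convexity of the $\astar$-wedge $W_k$ since $\astar$ is an inner point of $A$, the triangle $\triangle(\astar,a_{k-1},a_k)$ lying strictly left of $\ell$, and the leftmost choice of $b$ ruling out points of $B$ in $\triangle(a_{k-1},b,a_k)$) is exactly the argument the authors leave implicit. All steps check out, including the use of distinct $x$-coordinates to make ``leftmost'' well-defined and of general position to dismiss points on the diagonal.
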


From Observation~\ref{observation:observation1}(\ref{observation:observation1_item1})
and Observation~\ref{observation:observation4} we obtain the following result.

\begin{obs}\label{observation:observation5}
Let $P = A \cup B$ be an $\ell$-divided set with $A$ not in convex position 
and with no $\ell$-divided 5-hole in~$P$. 
Let $W_k$ be an $\astar$-wedge with $w_k \ge 2$ and $1 \le k \le t$
and let $b$ be the leftmost point in $W_k \cap B$.
For every point $b'$ in $(W_k \cap B) \setminus \{b\}$,
the line $\overline{bb'}$ intersects the segment $a_{k-1}a_k$.
Consequently, $b$ is inside $\triangle(a_{k-1},a_k,b')$,
to the left of $\dline{a_kb'}$,
and to the right of $\dline{a_{k-1}b'}$.
\end{obs}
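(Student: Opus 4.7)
The plan is to apply Observation~\ref{observation:observation1}(\ref{observation:observation1_item1}) to the $\ell$-divided 4-hole $H=\{\astar,a_{k-1},b,a_k\}$ given by Observation~\ref{observation:observation4}; in counterclockwise order its vertices read $(\astar,a_k,b,a_{k-1})$. The two sectors of $H$ adjacent to the vertex $b$ -- namely $S(b,a_{k-1},\astar,a_k)$ (opposite edge $a_k b$) and $S(a_{k-1},\astar,a_k,b)$ (opposite edge $b a_{k-1}$) -- are then empty of points of~$P$.

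I would next consider the two lines $\dline{a_{k-1}b}$ and $\dline{a_k b}$; they meet at $b$ and partition the plane into four open quadrants. A short check shows that intersecting each of these quadrants with the open wedge $W_k$ yields, respectively, the interior of $H$ (from the quadrant containing $\astar$), the two empty sectors above (from the two quadrants neighbouring the $\astar$-quadrant at $b$), and an open cone $C$ at $b$ bounded by the extensions of $b a_{k-1}$ and $b a_k$ past $b$ on the sides away from $a_{k-1}$ and $a_k$ (from the fourth, opposite quadrant). The identification of each of the two sectors with the corresponding quadrant inside $W_k$ uses that the third defining half-plane of the sector -- the one through an edge of the form $\astar a_i$ -- coincides with a bounding half-plane of $W_k$ and is therefore automatic.

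Now let $b' \in (W_k\cap B)\setminus\{b\}$. By general position $b'$ lies in the open wedge, it is on the far side of $\dline{a_{k-1}a_k}$ from $\astar$ (since $b'\in B$), and the $x$-coordinate of $b'$ exceeds that of $b$ (since $b$ is the leftmost point of $W_k\cap B$). Because $b'\in P$ and the interior of $H$ and both adjacent sectors are empty, $b'$ must lie in $C$. The ray from $b$ in direction $-\overrightarrow{bb'}$ then lies strictly between $\overrightarrow{ba_{k-1}}$ and $\overrightarrow{ba_k}$, so it crosses $a_{k-1}a_k$ at some interior point $q$; this proves that $\dline{bb'}$ meets the segment. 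For the ``consequently'' part, $b$ and $b'$ lie on the same (far) side of $\dline{a_{k-1}a_k}$ while $q$ lies on it, and $b$ has smaller $x$-coordinate than $b'$, so the three collinear points on $\dline{bb'}$ appear in the order $q,b,b'$. Thus $b$ lies strictly on the cevian segment $qb'$ of $\triangle(a_{k-1},a_k,b')$, so $b$ is in the interior of this triangle and the two orientation conditions (left of $\dline{a_k b'}$, right of $\dline{a_{k-1}b'}$) follow.

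I expect the most delicate step to be the quadrant/sector identification in the second paragraph: one needs to check that the third defining half-plane of each of the two sectors adjacent to $b$ is implied by membership in the open wedge $W_k$, which is what reduces the proof to a clean four-quadrant analysis around $b$.
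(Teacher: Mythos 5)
Your proof is correct and follows exactly the route the paper intends: the paper derives Observation~\ref{observation:observation5} directly from Observation~\ref{observation:observation4} (the $\ell$-divided 4-hole $\{\astar,a_{k-1},b,a_k\}$) together with Observation~\ref{observation:observation1}(\ref{observation:observation1_item1}) (emptiness of its sectors), which is precisely your argument, fleshed out with the four-quadrant analysis at $b$. The step you flag as delicate is indeed harmless, since for the conclusion you only need the inclusion of each quadrant-inside-$W_k$ into the corresponding sector, and that direction is immediate because the sector's third half-plane is one of the two half-planes bounding $W_k$.
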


The following lemma states that there is an $\ell$-divided 5-hole in $P$ 
if two consecutive \mbox{$\astar$-wedges} both contain exactly two points of~$B$.

\begin{lem}\label{lemma:lemma6}
Let $P = A \cup B$ be an $\ell$-divided set 
with $A$ not in convex position and with $|A|,|B| \ge 5$.
Let $W_i$ and $W_{i+1}$ be consecutive \mbox{$\astar$-wedges} with $w_i=2=w_{i+1}$ and $1 \leq i < t$.
Then there is an $\ell$-divided 5-hole in~$P$. 
\end{lem}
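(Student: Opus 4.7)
The plan is to argue by contradiction: assume $P$ has no $\ell$-divided 5-hole, and derive a contradiction from $w_i = w_{i+1} = 2$. By Lemma~\ref{lemma:lemma5} applied to $W := W_i \cup W_{i+1}$, it is enough to exhibit an $\ell$-divided 5-hole in the restricted set $Q := P \cap W$. Since the \mbox{$\astar$-wedges} partition angular directions at $\astar$ using only points of~$A$, we have $Q \cap A = \{\astar, a_{i-1}, a_i, a_{i+1}\}$; writing $\{b_1, b_2\} = W_i \cap B$ with $b_1$ leftmost and $\{b_3, b_4\} = W_{i+1} \cap B$ with $b_3$ leftmost, we also have $Q \cap B = \{b_1, b_2, b_3, b_4\}$. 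The task therefore reduces to locating an $\ell$-divided 5-hole among these eight named points.

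First I would assemble the structural constraints these points must satisfy. By Observation~\ref{observation:observation4}, $Q_1 := \{\astar, a_{i-1}, b_1, a_i\}$ and $Q_2 := \{\astar, a_i, b_3, a_{i+1}\}$ are $\ell$-divided 4-holes in~$P$. Observation~\ref{observation:observation5} places $b_1$ inside $\triangle(a_{i-1}, a_i, b_2)$, so that $b_2$ lies in the open cone $C_1$ with apex $b_1$ bounded by the two rays $\overrightarrow{a_{i-1}b_1}$ and $\overrightarrow{a_i b_1}$ extended past~$b_1$; symmetrically $b_4$ lies in the analogous cone $C_2$ at~$b_3$. Observation~\ref{observation:observation1}(i) further makes every sector of $Q_1$ and of $Q_2$ empty of points of~$P$. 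In particular, the counterclockwise angular order around $\astar$ of the eight points is $a_{i+1}, \{b_3, b_4\}, a_i, \{b_1, b_2\}, a_{i-1}$, with $b_2$ (resp.\ $b_4$) strictly farther from $\astar$ than $b_1$ (resp.\ $b_3$) in the sense described.

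Next, to produce an explicit 5-hole, I would case-analyze on which side of the chord $\dline{b_1 b_3}$ the vertex $a_i$ lies. In the case where $a_i$ and $\astar$ lie on opposite sides, I claim that $H := \{a_{i-1}, b_1, a_i, b_3, a_{i+1}\}$ is an $\ell$-divided 5-hole: the angular order forces $H$ to be a convex pentagon (ccw $a_{i+1}, b_3, a_i, b_1, a_{i-1}$); $\astar$ lies outside $\conv(H)$ by the side condition (together with the fact that $\astar$ has smaller $x$-coordinate than the other $A$-vertices, at least in the generic subcase $1 < i < t-1$); and $b_2, b_4$ are excluded from $\conv(H)$ because any point of $C_1$ (resp.\ $C_2$) that lay inside $\conv(H)$ would sit in the empty sector of $Q_1$ (resp.\ $Q_2$) adjacent to $b_1$ (resp.\ $b_3$). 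In the opposite case, $a_i$ lies inside $\conv(\{a_{i-1}, b_1, b_3, a_{i+1}\})$ and is a reflex point of the above pentagon, so I would switch to a 5-tuple built from the outer points $\{a_{i-1}, b_2, b_4, a_{i+1}\}$ together with either $\astar$ or $a_i$, exploiting the cone description of $b_2, b_4$ to push the boundary of the convex hull outward far enough to exclude the remaining three points of~$Q$.

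The principal obstacle lies in this second case: the candidate $\{a_{i-1}, b_2, a_i, b_4, a_{i+1}\}$ automatically contains $b_1 \in \triangle(a_{i-1}, a_i, b_2)$ in its interior, while $\{\astar, a_{i-1}, b_1, b_3, a_{i+1}\}$ contains $a_i$ in its interior, so one must combine ingredients into a candidate such as $\{\astar, a_{i-1}, b_2, b_4, a_{i+1}\}$ and use the cone and sector structure carefully to verify that neither $b_1$, nor $b_3$, nor $a_i$ winds up inside the hull. A secondary subtlety concerns the edge case $i = t-1$, where $a_{i+1}$ lies to the left of~$\astar$ and the generic $x$-coordinate argument must be replaced by a direct appeal to the emptiness of the sectors of $Q_1, Q_2$. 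Once an $\ell$-divided 5-hole is located in $Q$ in every configuration, Lemma~\ref{lemma:lemma5} transfers it to an $\ell$-divided 5-hole in~$P$, contradicting our assumption and finishing the proof.
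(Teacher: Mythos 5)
Your plan reduces, via Lemma~\ref{lemma:lemma5}, to producing an $\ell$-divided 5-hole among the eight points of $Q=P\cap(W_i\cup W_{i+1})$, and the decisive ``second case'' (when the chosen five points are not in convex position) is exactly where you stop: you name the candidate $\{\astar,a_{i-1},b_2,b_4,a_{i+1}\}$ and say one must ``verify'' that $b_1$, $b_3$, $a_i$ stay outside its hull, but you give no argument, and this is the whole difficulty of the lemma. Moreover, your outline never uses the hypothesis $|B|\ge 5$ (nor any point of $P$ outside $W$), so if it could be completed it would prove the statement already for $|B|=4$, a strictly stronger claim. The paper's proof strongly indicates that no argument confined to the eight points of $Q$ can work: in the non-convex case it does not exhibit a 5-hole inside $Q$ at all. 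Instead, assuming no $\ell$-divided 5-hole in $P$, it successively pins down the configuration (using, among other things, the point $a_{i+2}\in A\setminus Q$ and the fact that $|B|\ge 5$ forces a further $\astar$-wedge meeting $\ell$) and concludes that every point of $B\setminus Q$ would have to lie in sectors that are empty of points of $P$, i.e.\ $B$ would consist of only the four points inside $W$; the contradiction is with $|B|\ge 5$, not with the existence of a hole among your eight points. In the intermediate configuration the paper is driven to (Figures~\ref{fig:problem_2_2_page4} and~\ref{fig:problem_2_2_page35}), the natural in-$Q$ candidates fail: for instance $\{a_{i-1},b_2,a_i,b_4,a_{i+1}\}$ contains $b_1\in\triangle(a_{i-1},a_i,b_2)$, the four $A$-points are not even in convex position there (the paper shows $\astar\in\triangle(a_{i-1},a_{i+1},a_i)$), and nothing in your outline shows your $\astar$-based candidate is convex or empty. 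So the missing idea is precisely the one the paper supplies: after the local analysis fails to produce a hole, one must bring in the points of $P$ outside $W$ and use $|B|\ge5$ to reach a contradiction.

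Two secondary inaccuracies in the parts you do sketch. First, your emptiness argument ``any point of $C_1$ inside $\conv(H)$ would sit in a sector of $Q_1$'' is not correct as stated: the sectors of a 4-hole (one per edge, bounded by the supporting lines of the adjacent edges) do not cover the vertical cones at its vertices, and $C_1$ is such a vertical cone. The correct reason $b_2$ cannot lie in your case-1 pentagon is more elementary: the pentagon lies in the convex cone at $b_1$ spanned by the directions to its neighbours $a_{i-1}$ and $a_i$, while $b_2\in C_1$ lies in the opposite cone. Second, convexity of $\{a_{i-1},b_1,a_i,b_3,a_{i+1}\}$ does not follow from the angular order around $\astar$ alone (points in cyclic angular order need not be in convex position); you would need the case hypothesis together with the facts from Observations~\ref{observation:observation1} and~\ref{observation:observation5}, as the paper does for its pentagon in the convex subcase. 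These are repairable, but the unresolved second case, and the unused hypothesis $|B|\ge 5$, constitute a genuine gap.
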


\begin{proof}
The overall idea of the proof is as follows.
We suppose for contradiction that there is no $\ell$-divided 5-hole in~$P$.
Then we prove a sequence of structural facts on the layout of the points of~$P$
forced by this assumption.
Eventually we show that the structure of the point set $P$ resembles 
the point set from Figure~\ref{fig:problem_2_2_page35}(\subref{fig:problem_2_2_page5}).
In particular, we arrive at the conclusion that
$|B| = 4$, which contradicts our assumption $|B| \ge 5$.

Suppose for contradiction that there is no $\ell$-divided 5-hole in~$P$.
Let $W \mathrel{\mathop:}= W_i \cup W_{i+1}$ and let $Q \mathrel{\mathop:}= P \cap W$.
By Lemma~\ref{lemma:lemma5}, 
there is also no $\ell$-divided 5-hole in~$Q$.
We label the points in $B \cap W_i$ as $b_{i-1}$ and $b_i$ 
so that $b_{i-1}$ is to the right of $b_i$.
Similarly, we label the points in $B\cap W_{i+1}$ as $b_{i+1}$ and $b_{i+2}$ 
so that $b_{i+2}$ is to the right of $b_{i+1}$.
By Observation~\ref{observation:observation5}, 
the point $a_i$ is to the right of $\dline{b_ib_{i-1}}$ and 
to the left of $\dline{b_{i+1}b_{i+2}}$.
If the points $b_{i-1},b_i,b_{i+1},b_{i+2}$ are in convex position, 
then $a_i,b_{i+1},b_{i+2},b_{i-1},b_i$ form an $\ell$-divided 5-hole in~$P$;
see Figure~\ref{fig:problem_2_2}(\subref{fig:problem_2_2_page1}).
Thus, we assume the points $b_{i-1},b_i,b_{i+1},b_{i+2}$ are not in convex position.
Without loss of generality, we assume that the line $\overline{b_ib_{i-1}}$ intersects the segment $b_{i+1}b_{i+2}$,
as otherwise we consider the vertical reflection of the whole point set~$P$.

\begin{claim}
\label{claim_lemma6_claim_a}
The segments $a_ib_{i-1}$ and $b_ib_{i+1}$ intersect.
\end{claim}
As $\overline{b_ib_{i-1}}$ intersects $a_ia_{i-1}$ and $b_{i+1}b_{i+2}$, 
the point $b_{i-1}$ lies in the triangle $\triangle(b_i,b_{i+1},b_{i+2})$. 
Moreover, $b_{i-1}$ is to the right of $\dline{b_{i+1}b_i}$, 
$a_i$ is to the left of $\dline{b_{i+1}b_i}$,
$b_i$ is to the left of $\dline{a_ib_{i-1}}$, and 
$b_{i+1}$ is to the right of $\dline{a_ib_{i-1}}$.
Consequently, the points $a_i,b_{i+1},b_{i-1},b_i$ form an $\ell$-divided 4-hole in~$P$,
and, in particular, the segments $a_ib_{i-1}$ and $b_ib_{i+1}$ intersect; see Figure~\ref{fig:problem_2_2}(\subref{fig:problem_2_2_page2}).
This finishes the proof of Claim~\ref{claim_lemma6_claim_a}.

	\begin{figure}[htb]
		\hfill
		\begin{subfigure}[t]{.4\textwidth}\centering
			\includegraphics[page=1]{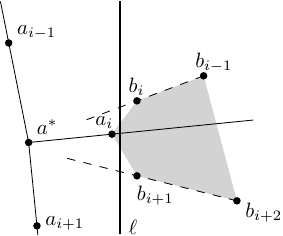}
			\caption{}
			\label{fig:problem_2_2_page1}
		\end{subfigure}
		\hfill
		\begin{subfigure}[t]{.5\textwidth}\centering
			\includegraphics[page=2]{problem_2_2}
			\caption{}
			\label{fig:problem_2_2_page2}
		\end{subfigure}
		\hfill
		\caption{
		(\subref{fig:problem_2_2_page1})~If $b_{i-1},b_i,b_{i+1},b_{i+2}$ are in convex position, then there is an $\ell$-divided 5-hole in~$P$. 
		(\subref{fig:problem_2_2_page2})~The points $\astar,a_{i+1},a_i,a_{i-1}$ form a 4-hole in~$P$.}
		\label{fig:problem_2_2}
	\end{figure}
	
The points $a_{i-1},b_i,b_{i-1},b_{i+2}$ are in convex position because 
$a_{i-1}$ is the leftmost and 
$b_{i+2}$ is the rightmost of those four points and because
both $a_{i-1}$ and $b_{i+2}$ lie to the left of $\dline{b_ib_{i-1}}$.
Moreover, the points $a_{i-1},b_i,b_{i-1},b_{i+2}$ form an $\ell$-divided 4-hole in $P$ 
as $\square(a_{i-1},b_i,b_{i-1},b_{i+2})$ lies in $W$ and $w_i=w_{i+1}=2$.
	
\begin{claim}
\label{claim_lemma6_claim_b}
Among the four points $b_{i+2},b_{i-1},b_{i+1},a_{i+1}$, the clockwise order around $b_{i+2}$ is $a_{i+1},b_{i+1},b_{i-1}$.
\end{claim}
The point $b_{i+2}$ is the rightmost of those four points.
By Observation~\ref{observation:observation5}, 
$b_{i+1}$ lies to the right of $\dline{a_ib_{i+2}}$
and $a_{i+1}$ lies to the right of $\dline{b_{i+1}b_{i+2}}$.
Since $b_{i-1} \in W_i$ and $b_{i+2} \in W_{i+1}$, 
the point $b_{i-1}$ lies to the left of $\dline{a_ib_{i+2}}$.
This finishes the proof of Claim~\ref{claim_lemma6_claim_b}.

\begin{claim}
\label{claim_lemma6_claim_c}
The points $b_{i+2},b_{i-1},b_{i+1},a_{i+1}$ are not in convex position.
\end{claim}

Suppose for contradiction that 
the points $b_{i+2},b_{i-1},b_{i+1},a_{i+1}$ form a convex quadrilateral.
Due to the clockwise order around $b_{i+2}$,
the convex quadrilateral is $\square(b_{i+2},b_{i-1},b_{i+1},\allowbreak a_{i+1})$.
The only points of $P$ that can lie in the interior of this quadrilateral
are $a^*$, $a_{i-1}$, $a_i$, and $b_i$.
Since the triangle $\triangle(b_{i+2},b_{i+1},a_{i+1})$ is contained in $W_{i+1}$,
it contains neither of the points $a^*$, $a_{i-1}$, $a_i$, and $b_i$.
Since the triangle $\triangle(b_{i+2},b_{i-1},b_{i+1})$ is contained in the convex hull of~$B$,
it does not contain $a^*$, $a_{i-1}$, nor $a_i$.
Moreover, as $b_{i-1}$ lies in the triangle $\triangle(b_{i},b_{i+1},b_{i+2})$, 
the triangle $\triangle(b_{i+2},b_{i-1},b_{i+1})$ also does not contain $b_i$.
Thus the quadrilateral $\square(b_{i+2},b_{i-1},b_{i+1},a_{i+1})$ is empty of points of~$P$.
By Observation~\ref{observation:observation1}(\ref{observation:observation1_item1}), 
the two sectors $S(a_{i-1},b_i,b_{i-1},b_{i+2})$ and $S(b_{i+2},b_{i-1},b_{i+1},a_{i+1})$ 
contain no point of~$P$.
Since every point of $B \setminus \{b_{i-1},b_i,b_{i+1},b_{i+2}\}$ is  either in $S(a_{i-1},b_i,b_{i-1},b_{i+2})$ or in $S(b_{i+2},b_{i-1},b_{i+1},a_{i+1})$, 
we have $B = \{b_{i-1},b_i,b_{i+1},b_{i+2}\}$. 
This contradicts the assumption that $|B| \ge 5$ and finishes the proof of Claim~\ref{claim_lemma6_claim_c}.
	
In particular, the point $b_{i+1}$ lies in the triangle  $\triangle(b_{i-1},a_{i+1},b_{i+2})$,
since $a_{i+1}$ is the leftmost  
and $b_{i+2}$ is the rightmost of the points $b_{i+2},b_{i-1},b_{i+1},a_{i+1}$ 
and since $b_{i-1}$ lies in $W_i$. 
The red area in Figure~\ref{fig:problem_2_2}(\subref{fig:problem_2_2_page2}) gives an illustration.

Consequently, the point $a_{i+1}$ lies to the left of $\dline{b_{i+1}b_{i-1}}$.
By Observation~\ref{observation:observation1}(\ref{observation:observation1_item1}), 
the point $a_{i+1}$ is not in the sector $S(b_{i+1},b_{i-1},b_i,a_i)$,
as otherwise the points $b_{i+1},b_{i-1},b_i,a_i,a_{i+1}$ 
form an $\ell$-divided 5-hole in~$P$.
Thus the point $a_{i+1}$ lies to the left of $\dline{a_ib_i}$; see Figure~\ref{fig:problem_2_2}(\subref{fig:problem_2_2_page2}).

\begin{claim}
\label{claim_lemma6_claim_d}
The points $\astar,a_{i+1},a_i,a_{i-1}$ are not in convex position.
\end{claim}
The points $\astar,a_{i+1},a_i,a_{i-1}$ do not form a 4-hole in~$P$ because
otherwise $b_i$ lies in the sector $S(a_{i-1},\astar,a_{i+1},a_i)$ 
and forms a $5$-hole together with $a_{i-1},\astar,a_{i+1},a_i$,
which is impossible by Observation~\ref{observation:observation1}(\ref{observation:observation1_item2}).
This finishes the proof of Claim~\ref{claim_lemma6_claim_d}.

\begin{claim}
\label{claim_lemma6_claim_e}
The point $a^*$ is inside the triangle $\triangle(a_{i-1},a_{i+1},a_i)$.
\end{claim}
The point $a_i$ is not inside $\triangle(a_{i-1},a_{i+1},a^*)$,
since, by Observation~\ref{observation:observation3}, 
$a_i$ is to the right of $\astar$ and 
since $a^*$ is the rightmost inner point of~$A$.
Since $a_{i-1}$ is to the left of $\dline{\astar a_i}$ 
and $a_{i+1}$ is to the right of $\dline{\astar a_i}$,
$\astar$ is the inner point of $\astar,a_{i+1},a_i,a_{i-1}$.
Figure~\ref{fig:problem_2_2_page4} gives an illustration.
This finishes the proof of Claim~\ref{claim_lemma6_claim_e}.

	\begin{figure}[htb]
		\centering
		\includegraphics[page=4]{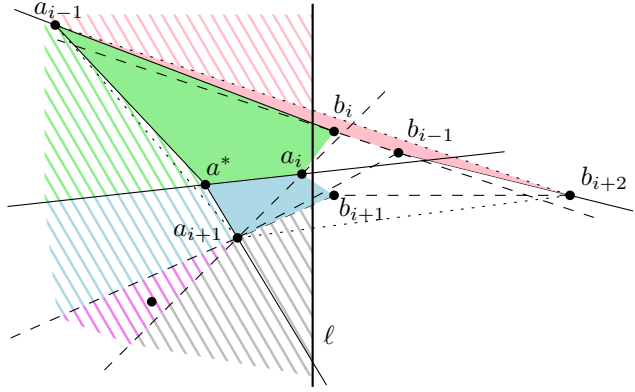}
		\caption{Location of the points of $A \setminus Q$.}
		\label{fig:problem_2_2_page4}
	\end{figure}	
	
\begin{claim}
\label{claim_lemma6_claim_f}
All points of $B\setminus Q$ lie in \mbox{$\astar$-wedges} below~$W_{i+1}$. 
\end{claim}
Since $|B| \ge 5$, there is another $\astar$-wedge besides $W_i$ and $W_{i+1}$ that intersects~$\ell$.
Now we show that all points of $B \setminus Q$ lie 
in \mbox{$a^*$-wedges} below $W_{i+1}$.
The rays $\overrightarrow{b_ia_{i-1}}$ and $\overrightarrow{b_{i-1}b_{i+2}}$ both start in $W_i$ and then leave $W_i$.
Moreover, the segment $b_ia_{i-1}$ intersects $\ell$ and
$b_{i-1}b_{i+2}$ intersects $\overrightarrow{\astar a_i}$.
As both $b_i$ and $b_{i-1}$ lie to the right of $\dline{a_{i-1}b_{i+2}}$,
all points of $B\setminus Q$ that lie in an $\astar$-wedge above $W_i$ also lie in the sector $S(a_{i-1},b_i,b_{i-1},b_{i+2})$.
We recall that, by Observation~\ref{observation:observation1}(\ref{observation:observation1_item1}), 
the sector $S(a_{i-1},b_i,b_{i-1},b_{i+2})$ is empty of points of~$P$.
This finishes the proof of Claim~\ref{claim_lemma6_claim_f}.

	\begin{claim}\label{claim:lemma6_claim1}
	We have $i=1$.
	That is, $W_i$ is the topmost $a^*$-wedge that intersects~$\ell$.
	\end{claim}
	By Observation~\ref{observation:observation3}, 
	$a_{i+1}$ lies to the right of $\astar$.
	Since $a_i$ and $a_{i+1}$ are both to the right of $a^*$
	and since $a^*$ is inside the triangle $\triangle(a_{i-1},a_{i+1},a_i)$,
	the point $a_{i-1}$ is to the left of~$a^*$.
	By Observation~\ref{observation:observation3}, 
	we have $i=1$.
	This proves Claim~\ref{claim:lemma6_claim1}.
	
	\begin{claim}\label{claim:lemma6_claim2}
	All points of $A \setminus Q$ 
	lie to the left of $\overline{a_{i+1}a_i}$,
	to the right of $\overline{a_{i+1}b_{i+1}}$,
	and to the right of $\overline{a^*a_{i+1}}$.
	\end{claim}
	The violet area in Figure~\ref{fig:problem_2_2_page4}
	gives an illustration where the remaining points of $A \setminus Q$ lie.
	We recall that the sector $S(a_{i-1},b_i,b_{i-1},b_{i+2})$
	(red shaded area in Figure~\ref{fig:problem_2_2_page4})
	is empty of points of~$P$.
	By Observation~\ref{observation:observation4},
	both sets $\{\astar,a_i,b_i,a_{i-1}\}$ 
	and $\{\astar,a_{i+1},b_{i+1},a_i\}$
	form $\ell$-divided 4-holes in~$P$.
	By Observation~\ref{observation:observation1}(\ref{observation:observation1_item1}), 
	the two sectors 
	$S(\astar,a_i,b_i,a_{i-1})$ (green shaded area in Figure~\ref{fig:problem_2_2_page4})
	and
	$S(\astar,a_{i+1},b_{i+1},a_i)$ (blue shaded area in Figure~\ref{fig:problem_2_2_page4})
	are thus empty of points of~$P$.
	Therefore, 
	no point of $A \setminus Q$ lies to the left of 
	$\dline{a_{i+1}b_{i+1}}$.
	Since $W$ is non-convex,
	every point of~$P$ that is to the left of $\dline{a^*a_{i+1}}$ lies in $Q$.
	Thus every point of $A \setminus Q$ lies to the right of $\dline{a^*a_{i+1}}$.
	Moreover, no point $a$ of $A \setminus Q$ lies
	to the right of $\dline{a_{i+1}a_i}$ 
	(gray area in Figure~\ref{fig:problem_2_2_page4})
	because otherwise, 
	$a_{i+1}$ is an inner point of $\triangle(a_i,\astar,a)$,
	which is impossible since $\astar$ is the rightmost inner point of $A$ and $a_{i+1}$ is to the right of $a^*$.
	This finishes the proof of Claim~\ref{claim:lemma6_claim2}.
	
	Now we have restricted where the points of $A \setminus Q$ lie.
	In the rest of the proof we show the following claim.
	We will then use the sectors $S(b_{i+2},b_{i+1},a_{i+1},a_{i+2})$ and 
	$S(a_{i-1},b_i,b_{i-1},b_{i+2})$ to argue that $|B| = |B \cap Q| = 4$, 
	which then contradicts the assumption $|B| \ge 5$.
	
	\begin{claim}\label{claim:lemma6_claim3}
	The points $b_{i+2},b_{i+1},a_{i+1},a_{i+2}$ form an $\ell$-divided 4-hole in~$P$.
	\end{claim}
	
	We consider $a_{i+2}$ 
	and show that the points $a_{i+1},\astar,a_{i-1},a_{i+2}$ are in convex position.
	It suffices to show that $a_{i+2}$ does not lie 
	in the triangle $ \triangle(\astar,a_{i-1},a_{i+1})$
	because of the cyclic order of $A \setminus \{a^*\}$ around~$a^*$.
	Recall that $\astar $ lies inside the triangle $\triangle(a_{i-1},a_{i+1},a_i)$,
	that $b_{i+1}$ lies inside the triangle $\triangle(a_i,a_{i+1},b_{i+2})$,
	and that $b_{i-1}$ lies inside the triangle $\triangle(a_{i-1},a_i,b_{i+2})$.
	Since the triangles $\triangle(a_{i-1},a_{i+1},a_i)$, $\triangle(a_i,a_{i+1},b_{i+2})$, and $\triangle(a_{i-1},a_i,b_{i+2})$
	are oriented counterclockwise along the boundary,
	the point $a_i$ lies inside $\triangle(a_{i-1},a_{i+1},b_{i+2})$.
	Thus also the points $\astar,b_i,b_{i+1}$ lie 
	in the triangle $\triangle(a_{i-1},a_{i+1},b_{i+2})$.
	Consequently, the triangle $\triangle(\astar,a_{i-1},a_{i+1})$
	is contained in the union of the sectors
	$S(a_{i+1},b_{i+1},a_i,\astar)$ 
	(blue shaded area in Figure~\ref{fig:problem_2_2_page4}) 
	and $S(\astar,a_i,b_i,a_{i-1})$ 
	(green shaded area in Figure~\ref{fig:problem_2_2_page4}).	
	Thus $a_{i+2}$ does not lie in the triangle $\triangle(\astar,a_{i-1},a_{i+1})$ 
	and the points
	$a_{i+1},\astar,a_{i-1},a_{i+2}$ are in convex position.
	
	We now show that the sector $S(a_{i+1},\astar,a_{i-1},a_{i+2})$ 
	is empty of points of~$P$.
	If the quadrilateral $\square(a_{i+1},\astar,a_{i-1},a_{i+2})$ 
	is not empty of points of~$P$, 
	then there is a point $a_{i-1}'$ of $A$ in $\triangle(\astar,a_{i-1},a_{i+2})$.
	This is because $\triangle(\astar,a_{i+2},a_{i+1})$ 
	is empty of points of $A$ due to the cyclic order 
	of $A \setminus \{a^*\}$ around $a^*$.
	We can choose $a_{i-1}'$ to be a point
	that is closest to the line $\overline{\astar a_{i+2}}$ 
	among the points of $A$ inside $\triangle(\astar,a_{i+2},a_{i+1})$. 
	If the quadrilateral $\square(a_{i+1},\astar,a_{i-1},a_{i+2})$ is empty of points of~$P$, 
	then we set $a_{i-1}'\mathrel{\mathop:}=a_{i-1}$.
	
	By the choice of $a_{i-1}'$, 
	the quadrilateral $\square(a_{i+1},\astar,a_{i-1}',a_{i+2})$ 
	is empty of points of~$P$. 
	Since $a_{i+1}$ and $a_{i+2}$ are consecutive in the order around $\astar$, 
	no point of $A$ lies in the sector $S(a_{i+1},\astar,a_{i-1}',a_{i+2})$.
	By Observation~\ref{observation:observation1}(\ref{observation:observation1_item2}),
	the sector $S(a_{i+1},\astar,a_{i-1}',a_{i+2})$  (gray shaded area in Figure~\ref{fig:problem_2_2_page35}(\subref{fig:problem_2_2_page5}))
	is empty of points of~$P$.
	Since 
	the sector $S(a_{i+1},\astar,a_{i-1},a_{i+2})$
	is a subset of 
	$S(a_{i+1},\astar,a_{i-1}',a_{i+2})$,
	the sector $S(a_{i+1},\astar,a_{i-1},a_{i+2})$ 
	is empty of points of~$P$.	
	
	\begin{figure}[htb]
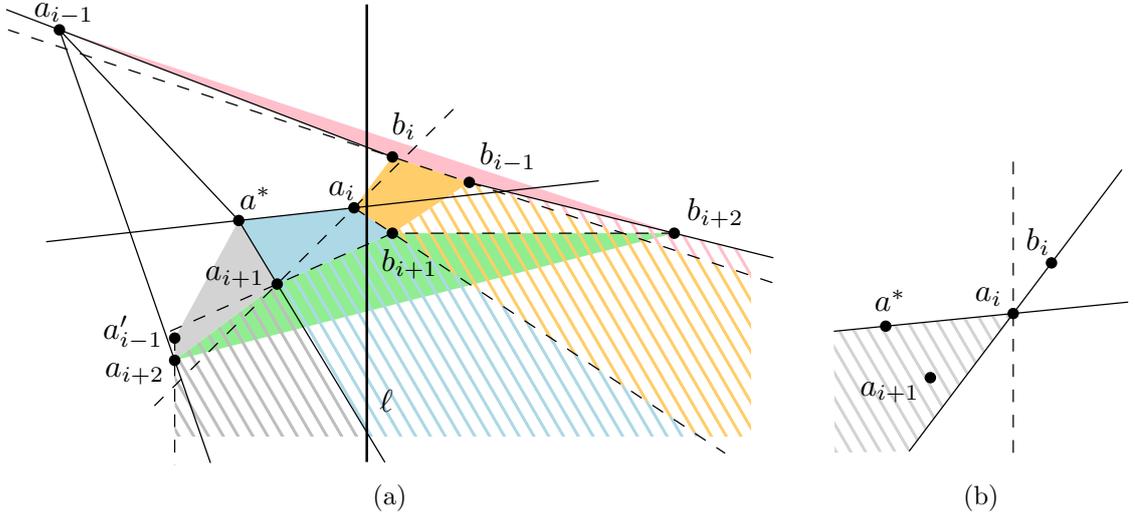

		\hfill
		\begin{subfigure}[t]{.65\textwidth}\centering
			\includegraphics[page=5,width=\textwidth]{problem_2_2}
			\caption{}
			\label{fig:problem_2_2_page5}
		\end{subfigure}
		\hfill
		\begin{subfigure}[t]{.25\textwidth}\centering
			\includegraphics[page=3,width=\textwidth]{problem_2_2}
			\caption{}
			\label{fig:problem_2_2_page3}
		\end{subfigure}
		\hfill
		\caption{
		(a)~Location of the points of $B\setminus Q$.
		(b)~The point $a_{i+1}$ lies to the left of $a_i$.}
		\label{fig:problem_2_2_page35}
	\end{figure}
	
	We show that $a_{i+1}$ is to the left of $a_i$ and to the right of $a_{i+2}$.
	Recall that $a_i$ lies to the right of $\astar$ 
	and to the left of $b_i$.
	The point $b_i$ lies to the left of $\dline{\astar a_i}$ and
	the point $a_{i+1}$ lies to the right of this line; see Figure~\ref{fig:problem_2_2_page35}(\subref{fig:problem_2_2_page3}).
	The point $a_{i+1}$ then lies to the left of $a_i$, since we know already that $a_{i+1}$ lies to the left of $\dline{a_i b_i}$.
	Recall that $a_{i+1}$ is to the right of $\astar$.
	Consequently, the point $a_{i+2}$ lies to the left of $a_{i+1}$, as $a_{i+2}$ 
	lies to the right of $\dline{\astar a_{i+1}}$ and to the left of $\dline{a_{i+1}a_i}$ by Claim~\ref{claim:lemma6_claim2}.

	Now we are ready to prove that
	the points $b_{i+2},b_{i+1},a_{i+1},a_{i+2}$ 
	form an $\ell$-divided 4-hole in~$P$ (green area in Figure~\ref{fig:problem_2_2_page35}(\subref{fig:problem_2_2_page5})).
	Recall that $b_{i+2}$ and $a_{i+2}$ both 
	lie to the right of $\dline{a_{i+1}b_{i+1}}$,
	and that $a_{i+2}$ is the leftmost and $b_{i+2}$ is the rightmost of those four points.
	Altogether,
	we see that the points $b_{i+2},b_{i+1},a_{i+1},a_{i+2}$ are in convex position.
	The four sectors 
	$S(b_{i+2},a_{i-1},b_i,b_{i-1})$ 
	(red shaded area in Figure~\ref{fig:problem_2_2_page35}(\subref{fig:problem_2_2_page5})),
	$S(b_{i-1},b_i,a_i,b_{i+1})$ 
	(orange shaded area in Figure~\ref{fig:problem_2_2_page35}(\subref{fig:problem_2_2_page5})),
	$S(b_{i+1},a_i,\astar,a_{i+1})$ 
	(blue shaded area in Figure~\ref{fig:problem_2_2_page35}(\subref{fig:problem_2_2_page5})), and
	$S(a_{i+1},\astar,a_{i-1}',a_{i+2})$ 
	(gray shaded area in Figure~\ref{fig:problem_2_2_page35}(\subref{fig:problem_2_2_page5}))	
	contain the quadrilateral $\square(b_{i+2},b_{i+1},a_{i+1},a_{i+2})$ 
	(green area in Figure~\ref{fig:problem_2_2_page35}(\subref{fig:problem_2_2_page5})).
	The sectors are empty of points of~$P$ by Observation~\ref{observation:observation1}(\ref{observation:observation1_item1}).
	Consequently, the convex quadrilateral 
	$\square(b_{i+2},b_{i+1},a_{i+1},a_{i+2})$ is an $\ell$-divided 4-hole in~$P$.
	This concludes the proof of Claim~\ref{claim:lemma6_claim3}.
		
	To finish the proof of Lemma~\ref{lemma:lemma6}, recall that 
	all points of $B\setminus Q$ lie in \mbox{$\astar$-wedges} below~$W_{i+1}$ by Claim~\ref{claim_lemma6_claim_f}.
	Since $a_{i+2}$ is to the left of $a_{i+1}$,
	the line $\dline{a_{i+2}a_{i+1}}$ intersects $\ell$ above $\ell \cap W_{i+2}$.
	The line $\dline{a_{i+1}b_{i+1}}$ also intersects $\ell$ above $\ell \cap W_{i+2}$,
	since $a_{i+1}$ and $b_{i+1}$ both lie in~$W_{i+1}$.
	From $i=1$, every point of $B \setminus Q$ is 
	to the right of $\dline{a_{i+2}a_{i+1}}$ and
	to the right of $\dline{a_{i+1}b_{i+1}}$.
	Since the points $b_{i+2},b_{i+1},a_{i+1},a_{i+2}$ 
	form an $\ell$-divided 4-hole in~$P$ by Claim~\ref{claim:lemma6_claim3}, 
	Observation~\ref{observation:observation1}(\ref{observation:observation1_item1})
	implies that the sector $S(b_{i+2},b_{i+1},a_{i+1},a_{i+2})$ is empty of points of~$P$.
	Thus every point of $B \setminus Q$ lies to the left of $\dline{b_{i+1}b_{i+2}}$.
	Since $\overline{b_{i+1}b_{i+2}}$ intersects $\ell \cap W_{i+1}$ above $\ell \cap a_{i+1}b_{i+1}$ and 
	since $b_{i-1}$ lies to the left of $b_{i+2}$ and 
	to the left of $\dline{b_{i+1}b_{i+2}}$,
	every point of $B \setminus Q$ lies to the left of~$\dline{b_{i-1}b_{i+2}}$ and to the right of $b_{i+2}$,
	and thus in the sector $S(a_{i-1},b_i,b_{i-1},b_{i+2})$.
	However, by Observation~\ref{observation:observation1}(\ref{observation:observation1_item1}),
	this sector is empty of points of~$P$.
	Thus we obtain $B = \{b_{i-1},b_i,b_{i+1},b_{i+2}\}$,
	which contradicts the assumption $|B| \ge 5$. 
	This concludes the proof of Lemma~\ref{lemma:lemma6}.
\end{proof}

Next we show that if there is a sequence of consecutive \mbox{$a^*$-wedges} 
where the first and the last $a^*$-wedge both contain two points of~$B$
and every $a^*$-wedge in between them contains exactly one point of~$B$, 
then there is an $\ell$-divided 5-hole in~$P$.

\begin{lem}\label{lemma:lemma7}
Let $P = A \cup B$ be an $\ell$-divided set 
with $A$ not in convex position and with $|A| \ge 5$ and $|B| \ge 6$.
Let $W_i, \ldots, W_j$ be consecutive \mbox{$\astar$-wedges} 
with $1 \le i < j \le t$, $w_i=2=w_j$, 
and $w_k=1$ for every $k$ with $i<k<j$.
Then there is an $\ell$-divided 5-hole in~$P$. 
\end{lem}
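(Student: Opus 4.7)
The plan is to argue by contradiction: suppose there is no $\ell$-divided 5-hole in $P$. By Lemma~\ref{lemma:lemma5}, it suffices to find one in $Q := P \cap W$, where $W := W_i \cup \cdots \cup W_j$, so I can assume throughout that I am working inside the wedge union. The case $j = i+1$ is exactly Lemma~\ref{lemma:lemma6} (the condition ``$w_k=1$ for $i<k<j$'' is vacuous), so I focus on $j \ge i+2$ and set up notation as follows. Let $b_{i-1}, b_i$ be the two points of $W_i \cap B$ with $b_i$ to the left of $b_{i-1}$; let $b_{j+1}, b_j$ be the two points of $W_j \cap B$ with $b_j$ to the left of $b_{j+1}$; and for $i<k<j$ let $c_k$ denote the single point of $W_k \cap B$.

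Applying Observation~\ref{observation:observation5} to $W_i$ and $W_j$ yields that $b_i \in \triangle(a_{i-1},a_i,b_{i-1})$ and $b_j \in \triangle(a_{j-1},a_j,b_{j+1})$; in particular, $a_i$ is to the right of $\dline{b_i b_{i-1}}$ and $a_{j-1}$ is to the left of $\dline{b_j b_{j+1}}$. Observation~\ref{observation:observation4} produces an $\ell$-divided 4-hole $\{a^*,a_{k-1},c_k,a_k\}$ for each $i<k<j$, together with the endpoint 4-holes $\{a^*,a_{i-1},b_i,a_i\}$ and $\{a^*,a_{j-1},b_j,a_j\}$, and Observation~\ref{observation:observation1}(\ref{observation:observation1_item1}) then supplies a family of empty sectors that sweep across~$W$. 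The natural 5-hole candidate to construct is $\{b_i, b_{i-1}, c_{i+1}, b_{j+1}, b_j\}$ (or a small variant in which $c_{i+1}$ is replaced by another $c_k$ or by $a_i$), whose convex position should follow from the side-of-line information above combined with the fact that $c_{i+1} \in W_{i+1}$ is separated from $W_i$ and from $W_j$ by rays through $a^*$, and whose emptiness should follow by chaining the empty sectors of the adjacent 4-holes listed above.

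I expect the cleanest execution to be induction on $j - i$, with Lemma~\ref{lemma:lemma6} providing the base case $j = i+1$. In the inductive step, if the candidate 5-tuple above is already an $\ell$-divided 5-hole in~$Q$, then Lemma~\ref{lemma:lemma5} lifts it back to~$P$ and we are done; otherwise, some point of $Q$ obstructs either convexity or emptiness, and I would peel the offending wedge (say $W_{i+1}$, absorbing $c_{i+1}$ into the hole and removing $W_i$ from consideration) to reduce $j - i$ and invoke the inductive hypothesis on a smaller wedge range. The main obstacle will be the bookkeeping required to show, in the various sub-cases arising from where an obstructing $c_k$ or $a_k$ lies, that at least one of the explicit 5-tuples is in convex position with all the relevant sectors (supplied by the Observations) simultaneously empty. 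This is exactly analogous to the long case analysis carried out in the proof of Lemma~\ref{lemma:lemma6}, and the slightly stronger hypothesis $|B| \ge 6$ is presumably needed precisely to rule out the degenerate configurations in which the only way to close the 5-hole would require a sixth point of $B$ which is not guaranteed when $|B|=5$.
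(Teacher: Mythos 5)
Your opening moves match the paper's: reduce to $Q = P \cap W$ via Lemma~\ref{lemma:lemma5}, dispose of $j=i+1$ by Lemma~\ref{lemma:lemma6}, and set up the same labelling and the same first consequences of Observations~\ref{observation:observation4} and~\ref{observation:observation5}. But from there on the proposal is a plan rather than a proof, and the plan has concrete defects. First, your primary candidate $\{b_i, b_{i-1}, c_{i+1}, b_{j+1}, b_j\}$ consists of five points of~$B$, so even if it were a hole it would not be $\ell$-divided and would yield no contradiction; only the unexamined variant containing a point of $A$ could serve. Second, emptiness is the actual difficulty, and ``chaining the empty sectors'' of the 4-holes $\{a^*,a_{k-1},c_k,a_k\}$ does not give it: the danger comes precisely from the single points of $B$ in the middle wedges, and controlling them requires the structural work that occupies most of the paper's proof (its Claims~\ref{claim:claim1}--\ref{claim:claim4}): all relevant points of $B$ lie to the right of $\dline{a_k a_{k-1}}$, the point of $B$ in $W_{i+1}$ lies to the right of $\dline{b_i b_{i-1}}$, and, by an induction along the wedges, each middle point lies inside the quadrilateral $\square(b_{i-1},b_{k-1},a_{k-1},a_k)$, hence to the left of $\dline{a_k b_{i-1}}$ and to the left of~$b_{i-1}$. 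This nesting is exactly what dictates the paper's final hole, which is anchored at the $j$-end and contains a point of $A$: it is spanned by $b_{i-1}$, the point of $B$ in $W_{j-1}$, the vertex $a_{j-1}$, and the two points of $B$ in $W_j$. A candidate like yours, taking points from both ends plus the point in $W_{i+1}$, will typically contain the remaining middle points of $B$ in its hull, so no ``small variant'' can be certified without an analysis of this kind.

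Third, the induction on $j-i$ cannot be closed as you describe: after ``peeling'' $W_i$, the leftmost wedge of the reduced range contains only one point of $B$, so the hypothesis $w_i=2=w_j$ of the lemma fails for the smaller instance and the inductive hypothesis does not apply; you would need to formulate and prove a stronger statement to induct on, which the proposal does not do. (The paper avoids any such induction on the range and instead proves the nesting claim directly; also, the hypothesis $|B|\ge 6$ is used inside Claim~\ref{claim:claim1} to guarantee a further $a^*$-wedge below $W_{k+1}$ in one of its cases, not to rule out a degenerate closing configuration as you guessed.) In short, the reduction and base case are right, but the heart of the lemma --- exhibiting a specific $\ell$-divided 5-tuple and proving both its convexity and its emptiness against the intermediate points of $B$ --- is missing, and the specific tuple you propose is not even $\ell$-divided.
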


\begin{proof}
For $i=j-1$, the statement follows by Lemma~\ref{lemma:lemma6}.
Thus we assume $j \ge i+2$.
That is, we have at least three consecutive \mbox{$a^*$-wedges}.
Suppose for contradiction that there is no $\ell$-divided 5-hole in~$P$.
Let $W\mathrel{\mathop:}=\bigcup_{k=i}^j W_k$ and $Q \mathrel{\mathop:}= P \cap W$. 
By Lemma~\ref{lemma:lemma5}, 
there is also no $\ell$-divided 5-hole in~$Q$.
Note that $|Q\cap B| = j-i+3$.
Also observe that $|Q \cap A|=j-i+2$ if $a_{i-1}=a_j=a_t$ 
and $|Q \cap A|=j-i+3$ otherwise.
We label the points in $B \cap W_i$ as $b_{i-1}$ and $b_i$ 
so that $b_{i-1}$ is to the right of $b_i$.
Further, we label the unique point in $B \cap W_k$ as $b_k$ for each $i < k < j$, and
the two points in $B\cap W_j$ as $b_{j}$ and $b_{j+1}$ so that $b_{j+1}$ is to the right of~$b_j$;
see Figure~\ref{fig:lemma3_illustration}.

\begin{figure}[htb]
	\centering
	\includegraphics{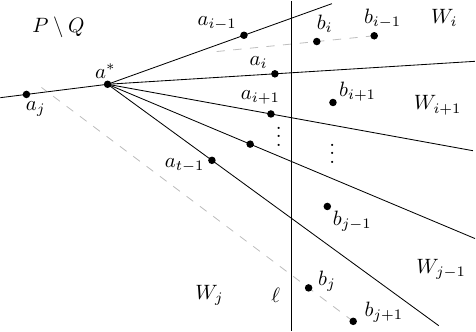}
	\caption{An illustration of \mbox{$\astar$-wedges} $W_i,\ldots,W_j$ in the proof of Lemma~\ref{lemma:lemma7}.}
	\label{fig:lemma3_illustration}
\end{figure}

\begin{claim}\label{claim:claim1}
All points of $B\cap (W_{k-1} \cup W_k \cup W_{k+1})$ are to the right of $\dline{a_{k}a_{k-1}}$ for every $k$ with $i < k < j$.
\end{claim}
The claim clearly holds for points from $B\cap W_k$. 
Thus it suffices to prove the claim only for points from 
$B\cup W_{k-1}$, 
as for points from $B\cup W_{k+1}$ it follows by symmetry.
Since $i<k<j$, 
Observation~\ref{observation:observation3}
implies that
the points $a_{k-1}$ and $a_k$ are both to the right of $\astar$.

We now distinguish the following two cases.

\begin{enumerate}
\item  
The point
$a_{k-2}$ is to the left of $\dline{\astar a_k}$;
see~Figure~\ref{fig:empty_lemma}(\subref{fig:empty_lemma1}).
Since $\astar$ is the rightmost inner point of~$A$,
$a_{k-1}$ does not lie inside the triangle $\triangle(\astar,a_k,a_{k-2})$
and thus $\square(a_{k-2},\astar,a_k,a_{k-1})$ is a 4-hole in~$P$.
All points of $B \cap W_{k-1}$ lie 
to the right of $\dline{\astar a_{k-2}}$
and to the left of $\dline{a_{k-2} a_{k-1}}$.
By Observation~\ref{observation:observation1}(\ref{observation:observation1_item2}), 
no point of $B \cap W_{k-1}$ lies in the sector $S(a_{k-2},\astar,a_k,a_{k-1})$ 
(red shaded area in Figure~\ref{fig:empty_lemma}(\subref{fig:empty_lemma1}))
and thus all points of $B\cap W_{k-1}$ are 
to the right of $\dline{a_ka_{k-1}}$.

\begin{figure}[htb]
	\hfill
	\begin{subfigure}[t]{.45\textwidth}\centering
		\includegraphics[page=1,width=\textwidth]{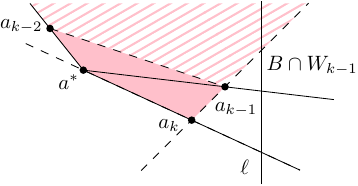}
		\caption{}
		\label{fig:empty_lemma1}
	\end{subfigure}
	\hfill
	\begin{subfigure}[t]{.45\textwidth}\centering
		\includegraphics[page=2,width=\textwidth]{empty_lemma}
		\caption{}
		\label{fig:empty_lemma2}
	\end{subfigure}
	\hfill
	\caption{An illustration of the proof of Claim~\ref{claim:claim1}.}
	\label{fig:empty_lemma}
\end{figure}

\item
The point
$a_{k-2}$ is to the right of $\dline{\astar a_k}$;
see~Figure~\ref{fig:empty_lemma}(\subref{fig:empty_lemma2}).
Since $a_{k-1}$ and $a_k$ are to the right of $\astar$
and since $a_{k-2}$ is to the left of $\dline{\astar a_{k-1}}$ 
and to the right of $\dline{\astar a_{k}}$,
the point $a_{k-2}$ is to the left of $\astar$. 
By Observation~\ref{observation:observation3}, 
we have $k=2$. 
That is, $W_{k-1}$ is the topmost $\astar$-wedge 
that intersects~$\ell$. 

There is another $\astar$-wedge below $W_{k+1}$,
since otherwise
$|B|=|B \cap (W_{k-1} \cup W_k \cup W_{k+1}) | \le 2+1+2 = 5$,
which is impossible according to the assumption $|B| \ge 6$.
By Observation~\ref{observation:observation3}, 
the point $a_{k+1}$ is to the right of $\astar$.
Moreover, since $\astar$ is the rightmost inner point of~$A$,
the point $a_{k}$ does not lie inside 
the triangle $\triangle(\astar,a_{k+1},a_{k-1})$.
The points $\astar,a_{k+1},a_k,a_{k-1}$ 
then form a 4-hole in~$P$, 
which has $\astar$ as the leftmost point.

By definition, all points of $B\cap W_{k-1}$ 
lie to the left of $\dline{\astar a_{k-1}}$.
As the ray $\overrightarrow{\astar a_{k+1} }$ intersects~$\ell$,
all points of $B\cap W_{k-1}$ lie also to the left of $\dline{\astar a_{k+1}}$.
By Observation~\ref{observation:observation1}(\ref{observation:observation1_item2}), 
no point of $B \cap W_{k-1}$ lies in the sector $S(\astar,a_{k+1},a_k,a_{k-1})$.
Thus all points of $B\cap W_{k-1}$ lie 
to the right of $\dline{a_ka_{k-1}}$.
\end{enumerate}
This finishes the proof of Claim~\ref{claim:claim1}.

We say that points $p_1,p_2,p_3,p_4$ form a \emph{counterclockwise-oriented convex quadrilateral} if every triple $(p_x,p_y,p_z)$ with $1 \leq x<y<z \leq 4$ is oriented counterclockwise.

\begin{claim}\label{claim:claim2}
The points $b_{i-1},b_i,a_i,a_{i+1}$ form a counterclockwise-oriented convex quadrilateral.
\end{claim}

Due to Claim~\ref{claim:claim1},
the points $b_{i-1}$ and $b_i$ are both to the right of $\dline{a_{i+1}a_i}$.
Thus the points $a_i$ and $a_{i+1}$ are both extremal points of those four points.
Also the point $b_{i-1}$ is extremal, 
since it is the rightmost of those four points.
The point $b_i$ does not lie inside the triangle $\triangle(a_{i+1},a_i,b_{i-1})$,
since, by Observation~\ref{observation:observation5},
$b_i$ lies to the left of $\dline{a_ib_{i-1}}$.
To finish the proof of Claim~\ref{claim:claim2},
it suffices to observe that the triples 
$(b_{i-1},b_i,a_i)$,
$(b_{i-1},b_i,a_{i+1})$,
$(b_{i-1},a_i,a_{i+1})$, and
$(b_i,a_i,a_{i+1})$
are all oriented counterclockwise.

\begin{claim}\label{claim:claim3}
The point $b_{i+1}$ lies to the right of $\dline{b_ib_{i-1}}$.
\end{claim}
	Suppose for contradiction 
	that $b_{i+1}$ lies to the left of $\dline{b_ib_{i-1}}$.
	We consider the five points $a_{i-1},a_i,b_{i-1},b_i,b_{i+1}$;
	see Figure~\ref{fig:lemma3_b3_specialcase}.
	By Claim~\ref{claim:claim1}, 
	the points $b_{i-1}$, $b_i$, and $b_{i+1}$ 
	lie to the right of $\dline{a_ia_{i-1}}$.
	Moreover, since $b_{i-1}$ and $b_i$ lie in $W_i$ and 
	since $b_{i+1}$ lies in $W_{i+1}$, 
	the points $b_{i-1}$ and $b_i$ both lie to the left of $\dline{a_ib_{i+1}}$.
	By Observation~\ref{observation:observation5},
	the point $a_{i-1}$ lies to the left of $\dline{b_ib_{i-1}}$
	and $b_{i+1}$ is to the right of $b_{i-1}$.
	Consequently, the points $b_{i-1}$ and $b_i$ 
	lie in the triangle $\triangle(a_{i-1},a_i,b_{i+1})$.
	Altogether, the points $a_{i-1}$, $b_i$, $b_{i-1}$, and $b_{i+1}$
	are in convex position. 
	
	\begin{figure}[htb]
	\centering
	\includegraphics{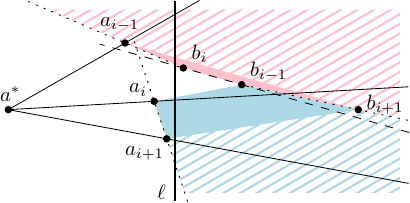}
	\caption{An illustration of the proof of Claim~\ref{claim:claim3}.}
	\label{fig:lemma3_b3_specialcase}
	\end{figure}
	
	By Claim~\ref{claim:claim1},
	the points $b_{i-1}$ and $b_{i+1}$ lie to the right of $\dline{a_{i+1}a_i}$. 
	Moreover, since $b_{i-1}$ is to the left of $b_{i+1}$ and to the left of $\dline{a_ib_{i+1}}$,
	the points $b_{i+1}$, $b_{i-1}$, $a_i$, and $a_{i+1}$ are in convex position.
	Since there are no further points in $W_i$ and $W_{i+1}$,
	the sets $\{a_{i-1},b_i,b_{i-1},b_{i+1}\}$ and $\{b_{i+1},b_{i-1},a_i,a_{i+1}\}$ are $\ell$-divided 4-holes in~$P$.
	By Observation~\ref{observation:observation1}(\ref{observation:observation1_item1}),
	the point $b_{i+2}$ lies neither in $S(a_{i-1},b_i,b_{i-1},b_{i+1})$ 
	nor in $S(b_{i+1},b_{i-1},a_i,a_{i+1})$.
	Recall that the ray $\overrightarrow{b_{i-1}b_{i+1}}$ intersects $\overrightarrow{\astar a_i}$
	and the ray $\overrightarrow{b_ia_{i-1}}$ does not intersect $\overrightarrow{\astar a_i}$.
	Therefore $b_{i+2}$ is to the right of $\dline{a_i a_{i+1}}$.
	This contradicts Claim~\ref{claim:claim1} 
	and finishes the proof of Claim~\ref{claim:claim3}.

\begin{claim}\label{claim:claim4}
For each $k$ with $i < k < j$, 
the point $b_k$ lies to the left of $\dline{a_k b_{i-1}}$ 
and to the left of~$b_{i-1}$.
\end{claim}
	Recall the labeling of the points in $B \cap W$; see Figure~\ref{fig:lemma3_illustration}.
	We show by induction on $k$ that
	\begin{enumerate}[(i)]
	 \item\label{lemma3_lem_t4_prop_1} 
	 the points $b_{i-1}$, $b_{k-1}$, $a_{k-1}$, and $a_k$
	 form a counterclockwise-oriented convex quadrilateral, 
	 which has $b_{i-1}$ as the rightmost point, and
	 \item\label{lemma3_lem_t4_prop_2} 
	 the point $b_k$ lies inside this convex quadrilateral 
	 and, in particular, to the left of $\dline{a_k b_{i-1}}$.
	\end{enumerate}
	Claim~\ref{claim:claim4} then clearly follows.
	
	For the base case,
	we consider $k=i+1$.
	By Claim~\ref{claim:claim2}, 
	the points $b_{i-1}$, $b_i$, $a_i$, and $a_{i+1}$ form a counterclockwise-oriented convex quadrilateral.
	By definition, $b_{i-1}$ is the rightmost of those four points.
	Figure~\ref{fig:lemma3_bi_end}(\subref{fig:lemma3_bi_end1}) gives an illustration.
	The point $b_{i+1}$ lies to the right of $\dline{a_{i+1}a_i}$
	and, by Claim~\ref{claim:claim3}, to the right of $\dline{b_ib_{i-1}}$.
	Moreover, since $b_{i+1}$ lies in $W_{i+1}$, it lies to the right of $\dline{a_ib_i}$.
	By Observation~\ref{observation:observation1}(\ref{observation:observation1_item1}),
	$b_{i+1}$ does not lie in the sector $S(b_{i-1},b_i,a_i,a_{i+1})$.
	Consequently,
	$b_{i+1}$ lies inside the quadrilateral $\square(b_{i-1},b_i,a_i,a_{i+1})$.
	
	\begin{figure}[htb]
		\hfill
		\begin{subfigure}[t]{.45\textwidth}\centering
			\includegraphics[page=1,width=\textwidth]{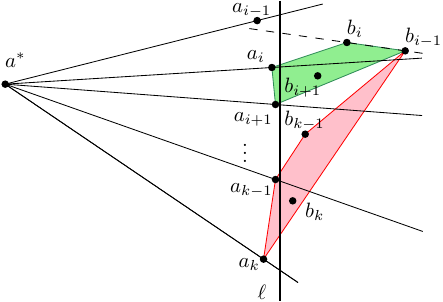}
			\caption{}
			\label{fig:lemma3_bi_end1}
		\end{subfigure}
		\hfill
		\begin{subfigure}[t]{.45\textwidth}\centering
			\includegraphics[page=2,width=\textwidth]{lemma3_bi_end}
			\caption{}
			\label{fig:lemma3_bi_end2}
		\end{subfigure}
		\hfill
		\caption{
		(\subref{fig:lemma3_bi_end1})~An illustration of the proof of Claim~\ref{claim:claim4}. 
		(\subref{fig:lemma3_bi_end2})~An illustration of the proof of Lemma~\ref{lemma:lemma7}.
		}
		\label{fig:lemma3_bi_end}
	\end{figure}

	For the inductive step,
	let $i+1 < k < j$.
	By the inductive assumption,
	the point $b_{k-1}$ lies to the left of $\dline{a_{k-1}b_{i-1}}$
	and to the left of $b_{i-1}$. 
	By Claim~\ref{claim:claim1}, $b_{k-1}$ lies to the right of $\dline{a_ka_{k-1}}$.
	Hence, the points $a_k$ and $b_{i-1}$ both lie 
	to the right of $\dline{a_{k-1}b_{k-1}}$.
	Recall that the points $b_{i-1},b_{k-1},a_{k-1},a_k$ lie to the right of $\astar$.
	Since $b_{i-1}$ is the first and 
	$a_k$ is the last in the clockwise order around $\astar$,
	the points $b_{i-1},b_{k-1},a_{k-1},a_k$ form 
	a counterclockwise-oriented convex quadrilateral,

	Recall that  
	the points $b_{k-1}$ and $b_k$ both 
	lie to the right of $\dline{a_ka_{k-1}}$ and that  
 	$b_{k-1}$ is to the left of $\dline{a_{k-1}b_{i-1}}$.
	Since $b_k \in W_k$, the point $b_k$ lies to the right of  $\dline{a_{k-1}b_{i-1}}$.
 	Therefore the clockwise order of $\{b_{k-1},b_{i-1},b_k\}$ around $a_{k-1}$ 
 	is $b_{k-1},b_{i-1},b_k$.
	Since $b_{i-1}$ is not contained in  $W_{k-1} \cup W_k$, the point 
 	$b_{i-1}$ is not contained in the triangle $\triangle(a_{k-1},b_k,b_{k-1})$.
	Consequently, 
	the points $a_{k-1},b_k,b_{i-1},b_{k-1}$ form a convex quadrilateral and,
	in particular, $b_k$ lies to the right of $\dline{b_{k-1}b_{i-1}}$.
	Figure~\ref{fig:lemma3_bi_end}(\subref{fig:lemma3_bi_end1}) gives an illustration.
	Since $b_k$ lies in $W_k$, 
	it lies to the right of $\dline{a_{k-1}b_{k-1}}$.
	By Observation~\ref{observation:observation1}(\ref{observation:observation1_item1}), the point $b_k$ does not lie in the sector $S(b_{i-1},b_{k-1},a_{k-1},a_k)$.
	Thus $b_k$ lies inside the quadrilateral $\square(b_{i-1},b_{k-1},a_{k-1},a_k)$.
	This finishes the proof of Claim~\ref{claim:claim4}.
	\medskip
	
	Using Claim~\ref{claim:claim4}, 
	we now finish the proof of Lemma~\ref{lemma:lemma7},
	by finding an $\ell$-divided 5-hole in the island $Q$ 
	and thus obtaining a contradiction with the assumption that there is no $\ell$-divided 5-hole in~$P$.
	In the following, 
	we assume, without loss of generality, 
	that $b_{j+1}$ is to the right of $b_{i-1}$.
	Otherwise we can consider a vertical reflection of~$P$.
	
	We consider the polygon $\mathcal{P}$ through the points 
	$b_{i-1},b_{j-1},a_{j-1},b_j,b_{j+1}$
	and we show that $\mathcal{P}$ is convex and empty of points of~$Q$.
	See Figure~\ref{fig:lemma3_bi_end}(\subref{fig:lemma3_bi_end2})
	for an illustration. 
	This will give us an $\ell$-divided 5-hole in~$Q$.
	
	We show that $\mathcal{P}$ is convex by proving that every point of 
	$\{b_{i-1},b_{j-1},a_{j-1},b_j,b_{j+1}\}$ 
	is a convex vertex of~$\mathcal{P}$.
	The point $a_{j-1}$ is a convex vertex of $\mathcal{P}$
	because it is the leftmost point in $\mathcal{P}$.
	The point $b_{i-1}$ is a convex vertex of $\mathcal{P}$
	because all points of $\mathcal{P}$ lie to the right of $\astar$
	and $b_{i-1}$ is the topmost point 
	in the clockwise order around $\astar$.
	The point $b_{j+1}$ is a convex vertex of $\mathcal{P}$
	because $b_{j+1}$ is the rightmost point in $\mathcal{P}$ 
	by Claim~\ref{claim:claim4} and by the assumption
	that $b_{j+1}$ is to the right of $b_{i-1}$.
	The point $b_{j-1}$ is a convex vertex of $\mathcal{P}$
	because $b_{j-1}$ lies to 
	the left of $\dline{a_{j-1}b_{i-1}}$ by Claim~\ref{claim:claim4}
	while $b_j$ and $b_{j+1}$ both lie to the right of this line.
	The point $b_j$ is a convex vertex of $\mathcal{P}$
	because, by Observation~\ref{observation:observation5}, 
	$b_j$ lies to the right of $\dline{a_{j-1}b_{j+1}}$
	while $b_{j-1}$ and $b_{i-1}$ both lie to the right of this line.
	Consequently, $\mathcal{P}$ is a convex pentagon 
	with vertices from both $A$ and~$B$. 
	Moreover, by Claim~\ref{claim:claim4}, all points $b_k$ with $i < k < j$ 
	lie to the left of $\overline{a_kb_{i-1}}$. 
	Since $b_i$ is to the left of $\dline{b_{j-1}b_{i-1}}$,  $\mathcal{P}$ is thus empty of points of~$Q$,
	which gives us a contradiction with the assumption that there is no $\ell$-divided 5-hole in~$P$.
\end{proof}

We now use Lemma~\ref{lemma:lemma7} 
to show the following upper bound on 
the total number of points of~$B$ 
in a sequence $W_i, \ldots, W_j$ of consecutive \mbox{$a^*$-wedges} with $w_i,\ldots,w_j \le 2$.

\begin{cor}\label{corollary:corollary8}
Let $P = A \cup B$ be an $\ell$-divided set 
with no $\ell$-divided 5-hole,
with $A$ not in convex position, and with $|A| \ge 5$ and $|B| \ge 6$.
For $1 \le i \le j \le t$, let $W_i, \ldots, W_j$ be consecutive \mbox{$\astar$-wedges} 
with $w_k \leq 2$ for every $k$ with $i \le k \le j$.
Then $\sum_{k=i}^j w_k \le j-i+2$.
\end{cor}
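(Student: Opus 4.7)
The plan is to combine Lemmas~\ref{lemma:lemma6} and~\ref{lemma:lemma7} into a ``gap principle'' and then conclude by a short counting argument. Specifically, I will show that between any two indices in $\{i,\ldots,j\}$ at which $w$ equals $2$ there must be at least one index at which $w$ vanishes; the bound then follows by balancing the excess contributions of the $2$'s against the deficits from the $0$'s.

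Let $T \mathrel{\mathop:}= \{k : i \le k \le j,\ w_k = 2\}$ and set $s \mathrel{\mathop:}= |T|$. If $s = 0$, then every $w_k$ is at most $1$, so $\sum_{k=i}^{j} w_k \le j-i+1 \le j-i+2$ and the claim is immediate. Otherwise, enumerate $T$ in increasing order as $k_1 < k_2 < \cdots < k_s$ and fix $r \in \{1,\ldots,s-1\}$. By the choice of $T$, every index $m$ with $k_r < m < k_{r+1}$ satisfies $w_m \le 1$. Lemma~\ref{lemma:lemma6} rules out $k_{r+1} = k_r+1$, since that would directly produce an $\ell$-divided $5$-hole in~$P$; thus $k_{r+1} \ge k_r+2$. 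Applying Lemma~\ref{lemma:lemma7} to the subrange $W_{k_r},\ldots,W_{k_{r+1}}$ (whose hypotheses are all inherited from the corollary, as $\{k_r,k_{r+1}\}\subseteq\{i,\ldots,j\}\subseteq\{1,\ldots,t\}$) then shows that not all intermediate values $w_m$ can equal $1$, for that would again yield an $\ell$-divided $5$-hole in~$P$. Hence there is an index $m_r \in (k_r,k_{r+1})$ with $w_{m_r} = 0$. The witnesses $m_1,\ldots,m_{s-1}$ lie in pairwise disjoint open intervals and are therefore distinct, so the number $z$ of indices $k \in \{i,\ldots,j\}$ with $w_k = 0$ satisfies $z \ge s-1$.

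A direct count then gives
\[
\sum_{k=i}^{j} w_k \;=\; 2s + \bigl((j-i+1) - s - z\bigr) \;=\; s + (j-i+1) - z \;\le\; s + (j-i+1) - (s-1) \;=\; j - i + 2,
\]
which is the desired bound. The main conceptual step is the gap principle in the second paragraph; once it is in hand, the counting is routine. The only potential source of trouble is verifying that the hypotheses of Lemma~\ref{lemma:lemma7} transfer to each invocation on a subrange, but this is immediate from the corollary's assumptions on $|A|$, $|B|$, and the non-convexity of $A$, none of which depend on the indices chosen.
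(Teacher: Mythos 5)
Your proof is correct and follows essentially the same route as the paper: both arguments use Lemma~\ref{lemma:lemma7} (with Lemma~\ref{lemma:lemma6} covering the adjacent case, which Lemma~\ref{lemma:lemma7} in fact already subsumes) to place an empty wedge between any two wedges containing two points of~$B$, and then conclude by the same counting, which the paper phrases as $n_2 \le n_0 + 1$. No gaps; your write-up is just a more explicit version of the paper's argument.
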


\begin{proof}
Let $n_0$, $n_1$, and $n_2$ be the number of \mbox{$a^*$-wedges} 
from $W_i,\dots,W_j$ with 0, 1, and 2 points of~$B$, respectively.
Due to Lemma~\ref{lemma:lemma7}, 
we can assume that
between any two \mbox{$a^*$-wedges} from $W_i,\dots,W_j$ with two points of $B$ each,
there is an $a^*$-wedge with no point of~$B$.
Thus $n_2 \le n_0 + 1$.
Since $n_0+n_1+n_2=j-i+1$,
we have
$
\sum_{k=i}^j w_k = 0 n_0 + 1 n_1 + 2 n_2 = (j-i+1) + (n_2 - n_0) \le j-i+2
$.
\end{proof}

\subsection{Computer-assisted results}
\label{sec:section_computer}

We now provide lemmas that are key ingredients in the proof of Theorem~\ref{theorem:theorem2}.
All these lemmas have computer-aided proofs.
Each result was verified by two independent implementations, 
which are also based on different abstractions of point sets; see below for details.

\begin{lem}\label{lemma:lemma9}
Let $P = A\cup B$ be an $\ell$-divided set 
with $|A|=5$, $|B|=6$, and with $A$ not in convex position.
Then 
there is an $\ell$-divided 5-hole in~$P$.
\end{lem}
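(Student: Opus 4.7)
The plan is to reduce the statement to a finite computer search. The three relevant properties — that $A$ is not in convex position, that $\ell$ separates $A$ from $B$, and that a 5-element subset $H \subseteq P$ forms an $\ell$-divided 5-hole in $P$ — depend only on the order type of the 11-point set $P$ together with the labeling induced by $\ell$. Since only finitely many such labeled order types exist, it suffices to verify the conclusion on a finite list.

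First, I would enumerate all order types of 11 points in general position, using an existing order type database of this size. For each order type I fix a concrete realization, rotating slightly if necessary so that all $x$-coordinates are distinct. Second, I consider all vertical lines $\ell$ that split the realization into a left set of size 5 and a right set of size 6; once the points are sorted by $x$-coordinate, this is the single partition with $|A|=5$ on the left and $|B|=6$ on the right. I discard the configurations in which $A$ happens to be in convex position, since they do not satisfy the hypothesis. Third, on every surviving configuration I iterate over all $\binom{11}{5}=462$ subsets $H$ and test whether $H$ is in convex position, whether $H \cap A \neq \emptyset \neq H \cap B$, and whether $\conv(H) \cap P = H$. The lemma is established if in every surviving configuration at least one $H$ passes all three tests.

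The main obstacle is trustworthiness: the number of order types of 11 points is of the order of $10^{9}$, so a bug in the database, in the enumeration, or in the chirotope-to-coordinate translation could silently invalidate the proof. To guard against this, I would implement the entire pipeline twice, independently, using two different abstractions of point sets: one based on concrete rational coordinates with orientations computed from exact determinant signs, and one based purely on abstract chirotope data in which no coordinates are ever instantiated. Agreement of the two implementations on every case gives strong evidence of correctness. The obvious symmetries — reflection across $\ell$, and the horizontal flip that would exchange the roles of $A$ and $B$ (which does not apply here since $|A|\neq|B|$, but analogous label symmetries do) — can additionally be exploited to cut the number of cases by a constant factor.
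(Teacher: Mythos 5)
Your overall strategy --- reduce the lemma to a finite check over all order types of $11$ points and run two independent implementations, one on a coordinate/database representation and one on an abstract orientation (chirotope/signature) representation --- is exactly the approach the paper takes for Lemmas~\ref{lemma:lemma9}--\ref{lemma:lemma12}. However, as described, your enumeration of cases is incomplete, and this is a genuine gap. You fix one concrete realization per order type and then consider only the single partition given by the $5$ leftmost versus the $6$ rightmost points of that realization. But the lemma quantifies over \emph{all} $\ell$-divided sets $P=A\cup B$ with $|A|=5$, $|B|=6$: for a fixed order type there are in general many partitions into a $5$-set and a $6$-set that are separable by a line (one for each combinatorially distinct direction of a separating line), and which of them appears as the vertical split depends on the particular realization stored in the database. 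A hypothetical counterexample $(P,\ell)$ would map, under the order-type isomorphism, to the database representative with some \emph{other} $x$-order, so its partition $(A,B)$ need not be the ``$5$ leftmost vs.\ $6$ rightmost'' split you test; your search could pass on that order type while the counterexample's partition goes unexamined. Note also that ``$5$ leftmost points of the chosen realization'' is not an order-type invariant, so the reduction to order types does not by itself justify testing a single split.

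The fix is straightforward and is what makes the computer search sound: for each order type, iterate over all $\binom{11}{5}=462$ subsets $A$ (or over all directions determined by pairs of points), keep those for which $\conv(A)\cap\conv(P\setminus A)=\emptyset$ --- linear separability is determined by the triple orientations, so this test is order-type invariant --- and $A$ is not in convex position, and only then search for a $5$-hole meeting both parts. Alternatively, do what the paper's second implementation does: enumerate \emph{labeled} $x$-sorted signature functions (so the $x$-order is part of the enumerated object) and always split into the first $5$ and last $6$ points; this covers every configuration with a vertical separating line, at the cost of also visiting non-realizable signatures, which is harmless for soundness (it only enlarges the set of checked cases) but should be kept in mind if the search ever reports a ``counterexample.''
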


\begin{lem}\label{lemma:lemma10}
Let $P = A\cup B$ be an $\ell$-divided set
with no $\ell$-divided $5$-hole in~$P$,
$|A| = 5$,  $4 \le |B| \le 6$, 
and with $A$ in convex position.
Then for every point $a$ of~$A$,
every convex $a$-wedge contains at most two points of~$B$.
\end{lem}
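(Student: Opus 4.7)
The plan is to prove the contrapositive by a computer-assisted case analysis over the finitely many order types of point sets $P$ satisfying the hypotheses. First I describe the geometric setup that structures the enumeration.

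Assume for contradiction that there is a point $a \in A$ and a convex $a$-wedge $W$ with $|W \cap B| \ge 3$, and that $P$ has no $\ell$-divided 5-hole. Since $A$ is in convex position and $W$ is convex, $W$ is bounded by rays $\overrightarrow{aa_L}$ and $\overrightarrow{aa_R}$ for two points $a_L, a_R \in A$ that are consecutive in the angular order of $A\setminus\{a\}$ around~$a$. Because $a, a_L, a_R$ all lie to the left of $\ell$ while $B$ lies to the right, the closed triangle $\triangle(a,a_L,a_R)$ is contained in $\conv(A)$ and hence contains no point of~$B$. Consequently the three points of $B\cap W$ lie in $W$ strictly on the opposite side of the chord $\overline{a_L a_R}$ from~$a$; label them $b_1, b_2, b_3$ so that the angular order around $a$ reads $a_L, b_1, b_2, b_3, a_R$. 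From this configuration one can already write down natural candidate $\ell$-divided 5-holes, such as $H_1 = \{a, a_L, b_1, b_3, a_R\}$ and $H_2 = \{a_L, b_1, b_2, b_3, a_R\}$ (and the obvious variants exchanging $b_1$ or $b_3$ for $b_2$). Each candidate is $\ell$-divided, and convex position of each follows from the angular order around $a$ together with the fact that the $b_i$ lie strictly on the opposite side of $\overline{a_L a_R}$ from~$a$.

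The only non-trivial remaining step is emptiness: no point of $P$ outside the candidate may lie inside its convex hull. Since $|A\setminus\{a,a_L,a_R\}|=2$ and $|B\setminus\{b_1,b_2,b_3\}|\le 3$, there are at most five other points to test per candidate, but the different configurations of these points produce many subcases; in some configurations every candidate of the form above is blocked, and a different 5-hole must be exhibited. Rather than enumerate these subcases by hand, I would run an exhaustive computer search over all order types of point sets $P=A\cup B$ satisfying $|A|=5$ in convex position, $4\le|B|\le 6$, and $A, B$ separated by a line. For each such order type, the program tests the two combinatorial predicates (i) ``$P$ contains no $\ell$-divided 5-hole'' and (ii) ``some convex $a$-wedge for some $a\in A$ contains at least three points of $B$,'' and the lemma asserts that these predicates are never simultaneously true. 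Since $|P|\le 11$ and $A$ is pinned to a convex configuration, the enumeration is tractable, and two independent implementations based on different abstractions of point sets (as in Subsection~\ref{sec:section_computer}) provide cross-verification.

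The main obstacle is not a single conceptual difficulty but rather the intricate interaction between the positions of the remaining at most five points of $P \setminus H_i$ and the candidate hulls: the ``obvious'' candidates can fail simultaneously in principle, and ruling this out seems to require checking the full order-type list. This is precisely what makes computer-assisted verification the natural tool here, and I do not see a clean, unified geometric argument that covers all of the subcases at once.
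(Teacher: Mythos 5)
Your proposal coincides with the paper's own proof: Lemma~\ref{lemma:lemma10} is established there exactly as you suggest, by an exhaustive computer search over all order types of the at most $11$ points involved, cross-checked by two independent implementations based on different abstractions of point sets (Subsection~\ref{sec:section_computer}), and your geometric preamble plays no logical role in the verification. One small caveat for the record: your claim that convex position of the candidate $5$-holes such as $\{a,a_L,b_1,b_3,a_R\}$ follows automatically from the angular order around $a$ and the $b_i$ lying beyond the chord $a_La_R$ is not correct in general (a $b_i$ close to the chord can lie inside the hull of the others), but since the actual argument is the order-type enumeration, this does not affect the validity of your approach.
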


\begin{lem}\label{lemma:lemma11}
Let $P = A\cup B$ be an $\ell$-divided set
with no $\ell$-divided $5$-hole in~$P$,
$|A|=6$, and $|B|=5$.
Then for each point $a$ of~$A$,
every convex $a$-wedge contains at most two points of~$B$.
\end{lem}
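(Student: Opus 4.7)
The plan is to reduce the claim to a finite combinatorial enumeration and verify it by exhaustive computer search, since the conclusion and the hypothesis are both invariants of the order type of $P$ together with the partition $A\cup B$. Recall that the order type records, for every ordered triple of points of $P$, whether it is oriented clockwise or counterclockwise; this data already determines which subsets form convex $k$-gons, whether a point lies inside a given polygon, and on which side of the line through two given points any third point lies. In particular, the property "there is no $\ell$-divided $5$-hole" and the property "some convex $a$-wedge contains at least three points of $B$" are both determined by the order type of $P$ augmented with the bipartition induced by~$\ell$.

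First, I would enumerate all combinatorially distinct $\ell$-divided point sets with $|A|=6$ and $|B|=5$. The most direct route uses the order-type database for $n=11$ already employed in this paper: iterate over all order types of $11$ points in general position, and for each, over all partitions $P=A\cup B$ with $|A|=6$, $|B|=5$ that are realizable by a vertical line separating the two parts. Such realizability can be checked by existence of a linear functional that splits the two parts correctly, which is a small linear programming feasibility test. Alternatively, one may build $\ell$-divided order types incrementally by placing points left-to-right and only recording those sign patterns that are consistent with the signotope (chirotope) axioms; this avoids instantiating the full database.

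Second, for each configuration that passes the filter, the test is routine. For every $a\in A$ and every unordered pair $\{a',a''\}\subseteq A\setminus\{a\}$, decide whether the $a$-wedge bounded by the rays $\overrightarrow{aa'}$ and $\overrightarrow{aa''}$ (on the side not containing the remaining points of $A$ between them in the cyclic order around $a$) is convex, and if so, count the points of $B$ lying in it; the counting uses only the signs of the orientations $(a,a',b)$ and $(a,a'',b)$. Then check whether $P$ contains an $\ell$-divided $5$-hole by iterating over all $\binom{11}{5}$ quintuples, verifying convex position, emptiness, and that at least one vertex lies on each side of $\ell$. If every configuration that avoids an $\ell$-divided $5$-hole also avoids a convex $a$-wedge with three or more $B$-points, the lemma follows.

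The main obstacle is computational cost: the number of order types of $11$ points is of order $10^9$, so a naive loop is expensive. I would prune aggressively by generating the set $A$ first (only $\ell$-divided order types with $|A|=6$ in which $A$ is in any of the few convex/non-convex combinatorial types relevant here), and for each such $A$ iterate over the combinatorial possibilities for inserting the $5$ points of $B$ to the right of $\ell$, backtracking as soon as a violating wedge or an $\ell$-divided $5$-hole is detected. To make the verification reliable, I would implement the enumeration twice using two independent abstractions of point configurations — for example, one directly on realized coordinates drawn from the order-type database, and a second purely combinatorial one built from signotope axioms — and confirm that both certify the same negative answer, matching the methodology described in Subsection~\ref{sec:section_computer}.
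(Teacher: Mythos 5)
Your proposal matches the paper's own proof of this lemma essentially verbatim: the paper establishes Lemmas~\ref{lemma:lemma9} to~\ref{lemma:lemma12} by an exhaustive computer search over all order types of at most $11$ points, once via the order-type database and once via an independent enumeration based on signature functions (signotopes), checking the $\ell$-divided partition, the wedge condition, and the absence of $\ell$-divided $5$-holes purely from triple orientations. So your approach, including the double independent implementation, is correct and essentially the same as the paper's.
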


\begin{lem}\label{lemma:lemma12}
Let $P = A\cup B$ be an $\ell$-divided set
with no $\ell$-divided $5$-hole in~$P$,
$5 \le |A| \le 6$, $|B| = 4$, and with $A$ in convex position.
Then for every point $a$ of~$A$,
if the non-convex $a$-wedge is empty of points of~$B$,
every $a$-wedge contains at most two points of~$B$.
\end{lem}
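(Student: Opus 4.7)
The plan is to verify the claim by exhaustive computer search over all combinatorially distinct $\ell$-divided point sets with $5 \le |A| \le 6$, $|B|=4$, and $A$ in convex position, so at most $10$ points in total. All of the relevant predicates — convex position of $A$, presence of an $\ell$-divided $5$-hole, identity of the non-convex $a$-wedge for each $a\in A$, and the count $|B\cap W|$ for each $a$-wedge $W$ — are determined purely by orientations of point triples, so it suffices to enumerate order types (chirotopes) rather than actual coordinates. This fits the two-implementation discipline announced in Subsection~\ref{sec:section_computer}: one implementation works directly on abstract order types, the other on representative coordinate realizations.

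Concretely, I would start from the database of order types on $n$ points for $n\in\{9,10\}$, for each order type fix a left-to-right indexing consistent with distinct $x$-coordinates, and then enumerate every vertical split into a prefix $A$ of size $5$ or $6$ and a suffix $B$ of size $4$; this encodes the $\ell$-division, since $\ell$ itself has no orientation footprint. I would discard any configuration for which $A$ is not in convex position (a single scan of the orientations along $A$), and discard any configuration that already contains an $\ell$-divided $5$-hole (test every $5$-subset that straddles the split for convexity and emptiness).

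On the surviving configurations I would, for each $a\in A$, sort $A\setminus\{a\}$ cyclically around $a$ to obtain the $a$-wedges, identify the unique non-convex (reflex) wedge at the extremal point $a$, and check the implication: whenever that non-convex wedge contains no point of $B$, every other $a$-wedge contains at most two points of $B$. A single orientation test per $(a,a',b)$ triple suffices to place $b\in B$ in the correct $a$-wedge. The lemma holds iff no counterexample is produced.

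The main obstacle is correctness of the search rather than its running time: the combinatorial counts are small, but one must be careful that the vertical-separation encoding, the identification of the non-convex $a$-wedge, and the detection of $\ell$-divided $5$-holes match the geometric definitions used in the statement. This is precisely why two independent implementations on different abstractions are run in parallel; any discrepancy on even a single order type would signal a bug, and agreement on the full enumeration gives the proof.
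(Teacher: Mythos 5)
Your overall strategy---reduce the statement to finitely many combinatorial types on at most $10$ points and verify it by exhaustive search with two independent implementations---is exactly the paper's approach to its computer-assisted Lemmas~\ref{lemma:lemma9} to~\ref{lemma:lemma12}. However, there is a concrete gap in how you enumerate the $\ell$-divided configurations. You propose to take each order type with one fixed realization, index its points by increasing $x$-coordinate, and test only the single split into the $x$-prefix $A$ (of size $5$ or $6$) and the $x$-suffix $B$ (of size $4$). But the lemma quantifies over \emph{all} $\ell$-divided sets, i.e., over all partitions $P=A\cup B$ of the prescribed sizes that are separable by \emph{some} line; the normalization ``$\ell$ is vertical'' is a rotation applied per configuration, and rotating a realization changes its $x$-order while leaving its order type unchanged. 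Hence a potential counterexample whose separating line is not vertical in the database's canonical realization would never be examined: per stored order type you test exactly one bipartition, whereas the number of line-separable bipartitions with $|B|=4$ can be much larger (for points in convex position, every four consecutive hull points already form a separable $B$). Your remark that ``$\ell$ has no orientation footprint'' is true but cuts against you: precisely because the line leaves no trace in the triple orientations, the combinatorial datum that must be enumerated is the pair (order type, separable bipartition), not the order type with a single canonical split.

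The repair is standard and cheap at these sizes: for every order type and every partition of its point set into $A$ and $B$ with $|A|\in\{5,6\}$ and $|B|=4$, keep the pair if and only if $\conv(A)\cap\conv(B)=\emptyset$ (linear separability is itself determined by the triple orientations), and run your wedge test on all surviving pairs; alternatively, follow the paper's second implementation and enumerate abstract $x$-ordered signature functions directly, in which case taking the last four points as $B$ is genuinely without loss of generality because all abstract $x$-orders are generated. With that correction, the rest of your procedure---identifying the unique non-convex $a$-wedge at each (extremal) point $a$ of the convex set $A$, locating each point of $B$ in its $a$-wedge by orientation tests, detecting $\ell$-divided $5$-holes by checking straddling $5$-subsets for convexity and emptiness, and cross-checking two independent implementations---is sound and matches the paper.
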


To prove these lemmas, we employ an exhaustive computer search through 
all combinatorially different sets of $|P| \leq 11$ points in the plane.
Since none of these statements depends on the actual coordinates of the points
but only on the relative positions of the points,
we distinguish point sets only by orientations of triples of points
as proposed by Goodman and Pollack~\cite{Goodman:1983:doi:10.1137/0212032}.
That is,
we check all possible equivalence classes of point sets in the plane
with respect to their triple-orientations, 
which are known as \emph{order types}.

We wrote two independent programs to verify Lemmas~\ref{lemma:lemma9} to \ref{lemma:lemma12}.
Both programs are available online~\cite{program_martin,program_manfred}.

The first implementation is based on programs from the two bachelor's theses of Scheu\-cher~\cite{scheucher2013,scheucher2014}.
For our verification purposes we reduced the framework from there to a very compact implementation~\cite{program_manfred}.
The program uses the order type database~\cite{aak-eotsp-01a,ak-aoten-06}, 
which stores all order types realizable as point sets of size up to 11. 
The order types realizable as sets of ten points 
are available online~\cite{aichDB} and
the ones realizable as sets of eleven points 
need about 96~GB and are available upon request from Aichholzer.
The running time of each of the programs in this implementation
does not exceed two hours on a standard computer.

The second implementation~\cite{program_martin} 
neither uses the order type database 
nor the program used to generate the database.
Instead it relies on the description of point sets 
by so-called \emph{signature functions}~\cite{bfk15,FW01_sweeps}.
In this description, points are sorted according to their $x$-coordinates and every unordered triple of points is represented by
a sign from $\{-,+\}$, where the sign is $-$ if the triple traced in the order by increasing $x$-coordinates is oriented clockwise and the sign is $+$ otherwise.
Every 4-tuple of points is then represented by four signs of its triples, which are ordered lexicographically. 
There are only eight 4-tuples of signs that we can obtain (out of 16 possible ones); see~{\cite[Theorem~3.2]{bfk15}} or~{\cite[Theorem~7]{FW01_sweeps}} for details.
In our algorithm, we generate all possible signature functions using a simple depth-first search algorithm and verify the conditions from our lemmas for every signature.
The running time of each of the programs in this implementation
takes up to a few hundreds of hours.

\subsection{Applications of the computer-assisted results}

Here we present some applications of the computer-assisted 
results from Section~\ref{sec:section_computer}.

\begin{lem}\label{lemma:lemma13}
Let $P = A \cup B$ be an $\ell$-divided set
with no $\ell$-divided $5$-hole in~$P$, with $|A| \ge 6$, and with $A$ not in convex position.
Then the following two conditions are satisfied.
\begin{enumerate}[(i)]
\item \label{lemma:lemma13_item1}
Let $W_{i},W_{i+1},W_{i+2}$ 
be three consecutive \mbox{$a^*$-wedges}
whose union is convex and
contains at least four points of~$B$.
Then $w_i,w_{i+1},w_{i+2} \leq 2$.
\item \label{lemma:lemma13_item2}
Let $W_{i},W_{i+1},W_{i+2},W_{i+3}$ 
be four consecutive \mbox{$a^*$-wedges} 
whose union is convex and 
contains at least four points of~$B$.
Then $w_i,w_{i+1},w_{i+2},w_{i+3} \leq 2$.
\end{enumerate}
\end{lem}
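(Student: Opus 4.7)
Both parts will be proved by contradiction using the computer-assisted Lemmas~\ref{lemma:lemma9}--\ref{lemma:lemma12}. Suppose $w_{k_0}\ge 3$ for some $k_0$ in the range considered; let $W$ be the union of the $\astar$-wedges in the statement and set $Q\mathrel{\mathop:}= P\cap W$. Since $W$ is convex by hypothesis, $Q$ is an island of $P$, so $Q$ has no $\ell$-divided 5-hole (alternatively by Lemma~\ref{lemma:lemma5}). By definition of the $\astar$-wedges the interior of each $W_k$ contains no point of $A$, so $Q\cap A$ consists precisely of $\astar$ together with the bounding-ray endpoints, giving $|Q\cap A|=5$ in part~(i) and $|Q\cap A|=6$ in part~(ii); the hypothesis gives $|Q\cap B|\ge 4$.

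Viewed as an $\ell$-divided set on its own, $Q$ has $\astar$-wedges $W_i,\ldots,W_{i+m}$ (with $m=2$ in part~(i) and $m=3$ in part~(ii)) together with the ``wrap-around'' wedge $\mathbb{R}^2\setminus W$. The wedges inside $W$ are convex because $W$ is, while the complementary wedge contains no $B$-points since $Q\cap B\subseteq W$; in particular, the non-convex $\astar$-wedge of $Q$, when it exists, is empty of $B$-points. The three points of $W_{k_0}\cap B$ thus exhibit a convex $\astar$-wedge of $Q$ containing three points of $Q\cap B$, which is what I aim to contradict.

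The case analysis is organized by $|Q\cap B|$ and by whether $Q\cap A$ is in convex position. In part~(i): if $Q\cap A$ is in convex position, Lemma~\ref{lemma:lemma10} (valid for $|Q\cap B|\in\{4,5,6\}$) bounds every convex $\astar$-wedge by $2$, contradicting $w_{k_0}\ge 3$; if $Q\cap A$ is not in convex position and $|Q\cap B|=6$, Lemma~\ref{lemma:lemma9} produces an $\ell$-divided 5-hole in $Q$. Part~(ii) is analogous: Lemma~\ref{lemma:lemma11} disposes of the subcase $|Q\cap B|=5$, and Lemma~\ref{lemma:lemma12} disposes of $|Q\cap B|=4$ with $Q\cap A$ in convex position (the required emptiness of the non-convex $\astar$-wedge was observed above).

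The main obstacle lies in the residual cases that fall outside the ranges covered by Lemmas~\ref{lemma:lemma9}--\ref{lemma:lemma12}: most importantly $|Q\cap B|>6$, and additionally the part-(i) subcase with $Q\cap A$ non-convex and $|Q\cap B|\in\{4,5\}$, and the part-(ii) subcase with $Q\cap A$ non-convex and $|Q\cap B|=4$. To handle these I would pass to a carefully chosen auxiliary $\ell$-divided set obtained by thinning (or where necessary augmenting) $Q\cap B$, always retaining three witnesses in $W_{k_0}$ so that a violation of the applicable small-case lemma still yields a contradiction. The delicate step is to check that this auxiliary set again has no $\ell$-divided 5-hole; I would argue this by a shrinking-type argument, using Observations~\ref{observation:observation1}(\ref{observation:observation1_item1}) and~\ref{observation:observation5} to swap any interior $B$-point of a putative 5-hole for a deeper vertex, eventually producing an $\ell$-divided 5-hole in $Q$ itself---the technical heart of the proof.
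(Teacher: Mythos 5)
Your reduction to the island $Q=P\cap W$ and your treatment of the small cases ($|Q\cap B|\in\{4,5,6\}$ via Lemmas~\ref{lemma:lemma10}, \ref{lemma:lemma11}, \ref{lemma:lemma12}, resp.\ Lemma~\ref{lemma:lemma9}) are sound and agree with the paper. The genuine gap is exactly where you place it, namely $|Q\cap B|\ge 7$, and the repair you sketch does not work. Thinning $Q\cap B$ ``while retaining three witnesses in $W_{k_0}$'' is not available: the no-$\ell$-divided-5-hole hypothesis is inherited only by islands, so you may delete only currently extremal points (the paper always deletes the rightmost point), and such deletions give you no control over which wedge keeps its points---the three witnesses in $W_{k_0}$ may be precisely the points you are forced to discard. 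Deleting non-extremal points (or ``augmenting'' the set) can create new 5-holes, and the shrinking idea via Observations~\ref{observation:observation1}(\ref{observation:observation1_item1}) and~\ref{observation:observation5} offers no mechanism for keeping the recovered hole $\ell$-divided, which is the entire difficulty; so the ``technical heart'' you defer is in fact the missing proof. (Your extra subcases with $Q\cap A$ not in convex position are also not where the work lies: $A\cap W$ consists of $a^*$ and the ray points, and the paper uses that it is in convex position---for wedges meeting $\ell$ this follows since, by Observation~\ref{observation:observation3}, the interior ray points lie to the right of $a^*$ and are therefore extremal in $A$.)

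The paper closes the case $|B\cap W|\ge 6$ by a mechanism your proposal never touches, and which does not try to preserve the three witnesses at all. For part~(i) it trims rightmost points until exactly six points of $B$ remain in $W$; Lemma~\ref{lemma:lemma10} then forces each of the three wedges to contain exactly two of them. It then passes to $\tilde P$, consisting of all of $A$ together with all points of $B$ up to the cut (again an island of $P$, with $A$ still not in convex position, so $a^*$ and the wedges are unchanged), and applies Lemma~\ref{lemma:lemma6} to two consecutive wedges containing exactly two points each, producing an $\ell$-divided 5-hole---a contradiction. Lemma~\ref{lemma:lemma6} is thus the key ingredient for part~(i), and it is absent from your plan. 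For part~(ii) the paper bootstraps from part~(i): applying part~(i) to the two overlapping triples of wedges shows that $w_j\ge 3$ forces $w_j=3$, the two middle wedges to be empty, and at most six points of $B$ in $W$; after trimming to six points one deletes the rightmost point of $P\cap W$ and applies Lemma~\ref{lemma:lemma11} to the resulting $11$-point island, since one of the two extreme wedges still contains three points of $B$. Neither this use of part~(i) inside part~(ii) nor the rightmost-point deletion followed by Lemma~\ref{lemma:lemma11} appears in your proposal, so as written it does not prove the lemma.
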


\begin{proof}
To show part~(\ref{lemma:lemma13_item1}), 
let $W \mathrel{\mathop:}= W_i \cup W_{i+1}\cup W_{i+2}$, 
$A' \mathrel{\mathop:}= A \cap W$, $B' \mathrel{\mathop:}= B \cap W$, 
and  $P' \mathrel{\mathop:}= A' \cup B'$.
Since $W$ is convex, 
$P'$ is an island of~$P$
and thus there is no $\ell$-divided 5-hole in~$P'$.
Note that $|A'| = 5$ and $A'$ is in convex position.
If $|B'| \leq 5$, 
then every convex $a^*$-wedge in~$P'$ 
contains at most two points of~$B'$
by Lemma~\ref{lemma:lemma10} applied to $P'$.
So assume that $|B'| \ge 6$.
If necessary, we remove points from $P'$ from the right to obtain $P''=A'\cup B''$, where
$B''$ contains exactly six points of~$B'$. 
Note that there is no $\ell$-divided 5-hole in $P''$,
since $P''$ is an island of~$P'$.
By Lemma~\ref{lemma:lemma10}, 
each $a^*$-wedge in $P''$ contains exactly two points of~$B''$.
Let $\tilde{B}$ be the set of points of $B$ that are to the left of the rightmost point of $B''$, including this point, and let $\tilde{P} \mathrel{\mathop:}= A \cup \tilde{B}$.
Note that $B'' \subseteq \tilde{B}$.
Since $|B''|=6$ and since $W \cap \tilde{B} = B''$,
each of the \mbox{$a^*$-wedges} $W_i,W_{i+1},W_{i+2}$ contains exactly two points of $\tilde{B}$.
The \mbox{$a^*$-wedges} $W_i$, $W_{i+1}$, and $W_{i+2}$ are also \mbox{$a^*$-wedges} in $\tilde{P}$.
Thus, Lemma~\ref{lemma:lemma6} applied to $\tilde{P}$ and $W_i,W_{i+1}$ then gives us an $\ell$-divided 5-hole in $\tilde{P}$.
From the choice of $\tilde{P}$, we then have an $\ell$-divided 5-hole in~$P$, a contradiction.

To show part~(\ref{lemma:lemma13_item2}), 
let $W \mathrel{\mathop:}= W_i \cup W_{i+1}\cup W_{i+2}\cup W_{i+3}$, 
$A' \mathrel{\mathop:}= A \cap W$, $B' \mathrel{\mathop:}= B \cap W$, 
and  $P' \mathrel{\mathop:}= A' \cup B'$.
Since $W$ is convex, 
$P'$ is an island of~$P$
and thus there is no $\ell$-divided 5-hole in~$P'$.
Note that $|A'| = 6$ and $A'$ is in convex position.
If $|B'| = 4$, 
then the statement follows from Lemma~\ref{lemma:lemma12} applied to $P'$
since $a^*$ is an extremal point of~$P'$.
If $|B'| = 5$, 
then the statement follows from Lemma~\ref{lemma:lemma11} applied to $P'$ and thus we can assume $|B'| \ge 6$.
Suppose for contradiction that $w_j \ge 3$ for some $i \le j \le i+3$. 
If necessary, we remove points from $P$ from the right to obtain $P''$
so that $B'' \mathrel{\mathop:}= P'' \cap B$ contains exactly six points of~$W \cap B$.
By applying part~(\ref{lemma:lemma13_item1}) for
$P''$ and $W_i \cup W_{i+1}\cup W_{i+2}$ and 
$W_{i+1} \cup  W_{i+2} \cup W_{i+3}$,
we obtain that 
$|B'' \cap W_i|, |B'' \cap W_{i+3}| = 3$ and 
$|B'' \cap W_{i+1}|, |B'' \cap W_{i+2}| = 0$.
Let $b$ be the rightmost point from $P'' \cap W$.
By Lemma~\ref{lemma:lemma11} applied to $W \cap (P''\setminus\{b\})$,
there are at most two points of $B'' \setminus \{b\}$
in every $a^*$-wedge in $W \cap (P'' \setminus \{b\})$.
This contradicts the fact that either
$|(B'' \cap W_i)\setminus \{b\}| = 3$ or $|(B'' \cap W_{i+3})\setminus \{b\}| = 3$.
\end{proof}

\subsection{Extremal points of \texorpdfstring{\boldmath{$\ell$}}{l}-critical sets}

Recall the definition of $\ell$-critical sets:
An $\ell$-divided point set $C=A \cup B$ is called $\ell$-critical if 
neither $C \cap A$ nor $C \cap B$ is in convex position and
if for every extremal point $x$ of~$C$, 
one of the sets $(C\setminus \{x\}) \cap A$ and 
$(C\setminus \{x\}) \cap B$ is in convex position.

In this section,
we consider an $\ell$-critical set $C=A \cup B$ with $|A|,|B| \ge 5$.
We first show that $C$ has at most two extremal points in~$A$ and 
at most two extremal points in~$B$.
Later, under the assumption
that there is no $\ell$-divided 5-hole in~$C$,
we show that $|B| \le |A|-1$ 
if $A$ contains two extremal points of~$C$ 
(Section~\ref{sec:two_extremal_points_in_A})
and that $|B| \le |A|$ 
if $B$ contains two extremal points of~$C$
(Section~\ref{sec:two_extremal_points_in_B}).

\begin{lem}\label{lemma:lemma4}
	Let $C = A \cup B$ be an $\ell$-critical set.
	Then the following statements are true.
\begin{enumerate}[(i)]
	\item \label{lemma:lemma4_item1}
	If $|A|\geq 5$, 
	then $|A \cap \partial\conv(C)| \le 2$.
		
	\item \label{lemma:lemma4_item3} 
	If $A \cap \partial\conv(C) = \{a,a'\}$, 
	then $a^*$ is the only inner point in $A$ 
	and every point of $A \setminus \{a,a'\}$ lies in the convex region
	spanned by the lines $\dline{\astar a}$ and $\dline{\astar a'}$ 
	that does not have any of $a$ and $a'$ on its boundary.
	
	\item \label{lemma:lemma4_item5} 
	If $A \cap \partial\conv(C) = \{a,a'\}$, 
	then the $a^*$-wedge that contains $a$ and $a'$ contains no point of~$B$.
\end{enumerate}
By symmetry, analogous statements hold for~$B$.
\end{lem}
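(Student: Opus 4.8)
The plan is to run everything off a single structural consequence of $\ell$-criticality: since $C = A \cup B$ is $\ell$-critical, $B$ is \emph{not} in convex position, so the defining property forces, for every extremal point $x$ of $C$ lying in $A$, that $(C\setminus\{x\})\cap A = A\setminus\{x\}$ \emph{is} in convex position. Thus each point of $A\cap\partial\conv(C)$ is a point whose removal convexifies $A$. I would first use this to prove the key fact that, whenever $A$ has at least two such points, $A$ has a unique interior (non-extremal) point. Fix distinct $a,a'\in A\cap\partial\conv(C)$; both are vertices of $\conv(A)$, and $A\setminus\{a\}$, $A\setminus\{a'\}$ are convex. Let $I$ be the set of interior points of $A$ (nonempty, since $A$ is not convex). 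Adding the external point $a$ back to the convex polygon $\conv(A\setminus\{a\})$ hides exactly a \emph{contiguous} arc of its boundary, and that arc is precisely $I$; hence the points of $I$ are consecutive vertices of $\conv(A\setminus\{a\})$, and each edge joining two of them is also an edge of $\conv(I)$. So the chain coming from $A\setminus\{a\}$ realizes all but one edge of the convex polygon $\conv(I)$, and symmetrically for $A\setminus\{a'\}$.

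The crux: if $|I|\ge2$, these two chains share a common edge $\dline{pq}$ of $\conv(I)$ (for $|I|=2$ directly; for $|I|\ge3$ because the two chains each omit only one of the $|I|$ edges). Then $\dline{pq}$ is an edge of both $\conv(A\setminus\{a\})$ and $\conv(A\setminus\{a'\})$, so $A\setminus\{a,p,q\}$ and $A\setminus\{a',p,q\}$ each lie on one side of the line $\dline{pq}$; as $|A|\ge5$ these two sets overlap in $A\setminus\{a,a',p,q\}\ne\emptyset$, forcing the same side, whence all of $A\setminus\{p,q\}$ lies on one side of $\dline{pq}$. Then $p,q$ would be on $\partial\conv(A)$, contradicting $p,q\in I$. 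Hence $|I|=1$, i.e.\ $\astar$ is the single interior point of $A$. I expect this common-edge argument to be the main obstacle, since it is where the two convexity conditions must be combined rather than used one at a time.

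Part~(\ref{lemma:lemma4_item1}) then follows quickly. If $|A\cap\partial\conv(C)|\ge3$, the above gives a unique interior point $\astar$, and $\conv(A)$ has $|A|-1\ge4$ vertices. Among any three of its vertices two are non-adjacent, so pick non-adjacent $x,x'\in A\cap\partial\conv(C)$. Removing $x$ convexifies $A$, so $\astar$ lies in the ``ear'' triangle cut off at $x$ (between $x$ and the chord through its neighbours); likewise $\astar$ lies in the ear at $x'$. But ears at non-adjacent vertices of a convex polygon are disjoint, a contradiction; hence $|A\cap\partial\conv(C)|\le2$. The same remark, applied to the unique interior point lying in both the ear of $a$ and the ear of $a'$, shows that $a$ and $a'$ are adjacent on $\conv(A)$, a fact I use below.

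For parts~(\ref{lemma:lemma4_item3}) and~(\ref{lemma:lemma4_item5}) assume $A\cap\partial\conv(C)=\{a,a'\}$, so $\astar$ is the unique interior point and $a,a'$ are adjacent vertices of $\conv(A)$. In each of $\conv(A\setminus\{a\})$ and $\conv(A\setminus\{a'\})$ the point $\astar$ is a vertex whose two neighbours are the neighbours of the deleted point in $\conv(A)$ (namely $u,a'$ and $a,w$); consequently every remaining point of $A$ lies in the interior angle of that polygon at $\astar$, that is, in the wedge bounded by $\overrightarrow{\astar u},\overrightarrow{\astar a'}$ and in the wedge bounded by $\overrightarrow{\astar a},\overrightarrow{\astar w}$. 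Intersecting these two wedges, and using that $a$ (resp.\ $a'$) lies in the cone vertically opposite to the interior angle that exposes $\astar$, places every point of $A\setminus\{a,a'\}$ in the sector at $\astar$ opposite to the rays $\overrightarrow{\astar a}$ and $\overrightarrow{\astar a'}$ --- which is exactly the claimed region, proving~(\ref{lemma:lemma4_item3}). For~(\ref{lemma:lemma4_item5}), since $a,a'$ are the only points of $A$ on $\partial\conv(C)$ they are consecutive there, so $\dline{aa'}$ is an edge of $\conv(C)$ and all of $C$, in particular all of $B$, lies on the $\astar$-side of $\dline{aa'}$. By~(\ref{lemma:lemma4_item3}) the points $a,a'$ are angularly consecutive around $\astar$, so they bound a single $\astar$-wedge $W^{\ast}$. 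This wedge meets the $\astar$-side of $\dline{aa'}$ only in $\triangle(\astar,a,a')\subseteq\conv(A)$, which lies left of $\ell$ and is disjoint from $B$, while the rest of $W^{\ast}$ lies on the far side of $\dline{aa'}$, which contains no point of $C$ at all. Hence $W^{\ast}$ contains no point of $B$, and the corresponding statements for $B$ follow by the symmetric argument.
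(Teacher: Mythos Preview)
Your argument is correct and takes a genuinely different route from the paper's proof. The paper argues each part directly and separately: for~(i) it picks three \emph{consecutive} points of $A\cap\partial\conv(C)$ and does a short case analysis on whether their triangle contains a further point of~$A$; for~(ii) it first observes that any triangle in $A$ containing an interior $A$-point must use both $a$ and $a'$ as vertices, and then argues region by region. You instead establish a single structural fact up front --- that two ``convexifying'' deletions force a unique interior point --- via your common-edge argument on $\conv(I)$, and then derive~(i) from the disjointness of ears at non-adjacent vertices and~(ii) from the fact that $a^*a$ and $a^*a'$ become polygon edges after the respective deletions. This is more conceptual, and the ear argument also cleanly yields the adjacency of $a$ and $a'$ on $\conv(A)$, which you reuse in~(ii) and~(iii). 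The paper's approach is more hands-on but avoids the (mild) combinatorics of chains on $\conv(I)$.

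One small remark: your key-fact argument invokes $|A|\ge 5$ to ensure $A\setminus\{a,a',p,q\}\ne\emptyset$, but parts~(ii) and~(iii) do not carry that hypothesis. This is harmless, since $|A|\ge 4$ holds for any $\ell$-critical set and $|A|=4$ forces exactly three extremal and one interior point, so $|I|=1$ is automatic there; but you should say so. Also, your deduction for~(ii) is quite compressed; the cleanest way to extract the conclusion from what you have is that $a^*a$ is an edge of $\conv(A\setminus\{a'\})$, so $A\setminus\{a'\}$ lies in one closed half-plane of $\overline{a^*a}$, while $a'$ must lie in the other (otherwise $a^*$ would be extremal in $A$); symmetrically for $\overline{a^*a'}$. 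This directly gives the opposite-sector statement without passing through the ``vertically opposite cone'' formulation.
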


\begin{proof}
To show statement~(\ref{lemma:lemma4_item1}),
suppose for contradiction that 
$|A \cap \partial\conv(C)| \ge 3$.
Let  $a$, $a'$, and $a''$ be three points from $A \cap \partial\conv(C)$ 
that are consecutive vertices of the convex hull $\conv(C)$.
If there is no point of $A$ in the triangle $\triangle(a,a',a'')$
spanned by the points $a$, $a'$, and $a''$,
then $A\setminus\{a'\}$ is not in convex position. 
This is impossible, since $C$ is an $\ell$-critical set.
If there is at least one point $a^{(1)}$ in $\triangle(a,a',a'')$, 
then we consider an arbitrary point $a^{(2)}$ from~$A\setminus\{a,a',a'',a^{(1)}\}$.
Such a point $a^{(2)}$ exists, since $|A|\ge 5$.
The point $a^{(1)}$ lies inside one of the triangles $\triangle(a,a',a^{(2)})$, $\triangle(a,a'',a^{(2)})$, 
or in $\triangle(a',a'',a^{(2)})$ 
and thus one of the sets $A\setminus\{a''\}$, $A \setminus\{a'\}$, or 
$A\setminus\{a\}$ is not in convex position,
which is again impossible.
In any case, $C$ cannot be $\ell$-critical 
and we obtain a contradiction. 

To show statement~(\ref{lemma:lemma4_item3}), 
assume that $A \cap \partial\conv(C) = \{a,a'\}$.
Every triangle in $A$ with a point of $A$ in its interior 
has $a$ and $a'$ as vertices,
as otherwise $A\setminus\{a\}$ or $A\setminus\{a'\}$ 
is not in convex position, which is impossible. 
Consider points $a^{(1)}$ and $a^{(2)}$ from $A$ 
such that $\triangle(a,a',a^{(1)})$ contains~$a^{(2)}$. 
Denote by $R$ the region bounded by $\overline{aa^{(2)}}$ and $\overline{a'a^{(2)}}$ that contains $a^{(1)}$.
If there is a point $a^{(3)}$ in $A \setminus (R \cup \{a,a'\})$ then 
$a^{(2)}$ lies in one of $\triangle(a,a^{(1)},a^{(3)})$ and $\triangle(a',a^{(1)},a^{(3)})$,
implying that $A\setminus\{a\}$ or $A\setminus\{a'\}$ 
is not in convex position. 
Hence all points of $A\setminus\{a,a',a^{(2)}\}$ lie in $R$.
Moreover, any further inner point $a^{(4)}$ from $A \cap R$ lies 
in some triangle $\triangle(a,a',a^{(5)})$ for some $a^{(5)} \in A \cap R$.
Thus, $a^{(4)}$ also lies in one of the triangles 
$\triangle(a,a^{(2)},a^{(5)})$ or $\triangle(a',a^{(2)},a^{(5)})$.
This implies that $A\setminus\{a\}$ or $A\setminus\{a'\}$ 
is not in convex position. 
Hence $a^{(2)}$ is the only inner point of~$A$.

To show statement~(\ref{lemma:lemma4_item5}), 
assume that $A \cap \partial\conv(C) = \{a,a'\}$.
Let $W_i$ be the wedge that contains $a$ and~$a'$.
Since $a$ and $a'$ are the only extremal points of $C$ contained in~$A$,
the segment $aa'$ is an edge of $\conv(C)$.
The points $a$, $a'$, and $a^* $ all lie in~$A$ and 
thus the triangle $\triangle(a,a',a^*)$ contains no points of~$B$.
Since all points of $C$ lie in the closed halfplane that is determined by the line $\overline{aa'}$ and that contains~$a^*$,
the wedge $W_i$ contains no points of~$B$.
\end{proof}

We remark that the assumption $|A| \ge 5$ 
in part (\ref{lemma:lemma4_item1}) 
of Lemma~\ref{lemma:lemma4} is necessary.
In fact, arbitrarily large $\ell$-critical sets 
with only four points in $A$ and 
with three points of $A$ on $\partial\conv(C)$ exist, 
and analogously for~$B$.
Figure~\ref{fig:minimal_sets}(\subref{fig:minimal_sets_C}) gives an illustration.

\begin{lem}\label{lemma:lemma14}
Let $C = A \cup B$ be an $\ell$-critical set
with no $\ell$-divided 5-hole in~$C$ and with $|A| \ge 6$.
Then $w_i \le 3$ for every $1 < i < t$.
Moreover, if $|A \cap \partial \conv (C)| = 2$,
then $w_1, w_t \le 3$.
\end{lem}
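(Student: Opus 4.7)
We argue by contradiction. Suppose that $w_j \ge 4$ for some index $j$ permitted by the hypothesis, namely $1 < j < t$, or else $j \in \{1, t\}$ under the additional assumption $|A \cap \partial \conv(C)| = 2$. The goal is to exhibit three (or four) consecutive $a^*$-wedges $W_i, W_{i+1}, W_{i+2}$ (possibly also $W_{i+3}$) whose union $W$ is convex, contains $W_j$, and satisfies $|W \cap B| \ge 4$. Once such $W$ is found, the convexity of $W$ ensures that $P \cap W$ is an island of~$C$ inheriting the no-$\ell$-divided-$5$-hole property, and Lemma~\ref{lemma:lemma13} then forces $w_j \le 2$, contradicting $w_j \ge 4$.

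For the typical interior case $3 \le j \le t-2$, Observation~\ref{observation:observation3} ensures that both bounding rays $\overrightarrow{a^* a_{j-2}}$ and $\overrightarrow{a^* a_{j+1}}$ of the union $W_{j-1} \cup W_j \cup W_{j+1}$ lie strictly to the right of $a^*$. The angular extent of this union at $a^*$ is therefore strictly less than $\pi$, so it is a convex wedge and Lemma~\ref{lemma:lemma13}(i) applies directly. For the intermediate cases $j \in \{2, t-1\}$ with $t \ge 5$, we instead take the shifted three-wedge union $W_2 \cup W_3 \cup W_4$ (respectively $W_{t-3} \cup W_{t-2} \cup W_{t-1}$), which is convex by the same argument since its two bounding rays still lie to the right of $a^*$.

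For the remaining, boundary-sensitive configurations (the small-$t$ cases for inner $j$, and the indices $j \in \{1, t\}$ under the assumption $|A \cap \partial \conv(C)| = 2$), one bounding ray of the relevant three-wedge union lies to the left of $a^*$, so convexity is not automatic. The essential ingredient here is Lemma~\ref{lemma:lemma4}: under $|A \cap \partial \conv(C)| = 2$, part~(\ref{lemma:lemma4_item3}) forces $a^*$ to be the unique interior point of~$A$, whence $t = |A|-2 \ge 4$ and only one $a^*$-wedge fails to cross $\ell$; part~(\ref{lemma:lemma4_item5}) then guarantees that this single non-$\ell$-crossing wedge, bounded by the two extreme points $a$ and $a'$, contains no point of~$B$. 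A direct angular computation using these constraints (together with the fact that by Observation~\ref{observation:observation3} the points $a_1,\ldots,a_{t-1}$ lie in a common half-plane at $a^*$) shows that the three-wedge union $W_1 \cup W_2 \cup W_3$, and analogously $W_{t-2} \cup W_{t-1} \cup W_t$, has angular extent at most $\pi$ at $a^*$; it is therefore convex, and Lemma~\ref{lemma:lemma13}(i) again yields the contradiction. In the few remaining corner configurations, one falls back to the four-wedge version of Lemma~\ref{lemma:lemma13}(ii).

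The main technical obstacle is establishing convexity of the chosen three- or four-wedge union in the boundary configurations, since the angular positions of the points $a_0,\ldots,a_t$ around $a^*$ must be controlled to guarantee that the two extremal bounding rays of the union lie in a common closed half-plane at $a^*$. This is precisely where the structural power of $\ell$-criticality, channelled through Lemma~\ref{lemma:lemma4}, is most heavily used.
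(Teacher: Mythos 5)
Your reduction to Lemma~\ref{lemma:lemma13} via a convex union of consecutive wedges containing $W_j$ is the right idea, and your treatment of the generic range $3\le j\le t-2$ is essentially correct (with the small caveat that the convexity of $W_{j-1}\cup W_j\cup W_{j+1}$ needs \emph{all four} bounding rays $\overrightarrow{a^*a_{j-2}},\dots,\overrightarrow{a^*a_{j+1}}$ to point to the right of $a^*$, not just the two extreme ones; Observation~\ref{observation:observation3} does give this). The genuine gap is in the boundary cases, which are the actual content of the lemma. First, for inner indices $j$ with $t\in\{3,4\}$ the conclusion $w_j\le 3$ must be proved \emph{without} the hypothesis $|A\cap\partial\conv(C)|=2$, but your ``remaining configurations'' paragraph argues entirely under that hypothesis; the case $|A\cap\partial\conv(C)|=1$ (which does occur) is simply not addressed, and Lemma~\ref{lemma:lemma4}(\ref{lemma:lemma4_item3})--(\ref{lemma:lemma4_item5}) give you nothing there. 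Second, even under $|A\cap\partial\conv(C)|=2$ the two facts you assert are not established and are false in general: $t=|A|-2$ does not follow (Lemma~\ref{lemma:lemma4}(\ref{lemma:lemma4_item3}) only confines $A\setminus\{a,a'\}$ to the cone opposite the wedge $W_h$ spanned by $a,a'$, and that cone may contain points to the left of $a^*$), and $W_1\cup W_2\cup W_3$ need not have angular extent at most $\pi$. Indeed, when $W_h$ itself crosses $\ell$ one has $W_h=W_1$, the wedge adjacent to $W_h$ can have angle arbitrarily close to $\pi$, and $W_1\cup W_2\cup W_3$ then exceeds $\pi$; the lemma survives only because $w_1=0$ by Lemma~\ref{lemma:lemma4}(\ref{lemma:lemma4_item5}), a case distinction your argument never makes, and the promised ``direct angular computation'' is exactly the missing content. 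The fallback to Lemma~\ref{lemma:lemma13}(\ref{lemma:lemma13_item2}) does not rescue this, since part~(\ref{lemma:lemma13_item2}) equally requires the four-wedge union to be convex.

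What your write-up is missing is the step where $\ell$-criticality is actually used, and it handles all indices uniformly. Given $W_i$, either every point of $A\cap\partial\conv(C)$ lies on the boundary of $W_i$ --- in which case either $|A\cap\partial\conv(C)|=1$ and $i\in\{1,t\}$ (excluded by the statement), or $|A\cap\partial\conv(C)|=2$ and $w_i=0$ by Lemma~\ref{lemma:lemma4}(\ref{lemma:lemma4_item5}) --- or there is an extremal point $a\in A\cap\partial\conv(C)$ not on $\partial W_i$. Remove it: by $\ell$-criticality $A\setminus\{a\}$ is in convex position, so $a^*$ becomes extremal in $A\setminus\{a\}$ and $C\setminus\{a\}$ has exactly one non-convex $a^*$-wedge, which is precisely the union of the two $a^*$-wedges of $C$ incident to $a$. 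The closure of its complement is therefore a \emph{convex} union of $|A|-3\ge 3$ consecutive $a^*$-wedges of $C$ containing $W_i$, and any three consecutive ones among them containing $W_i$ satisfy the hypotheses of Lemma~\ref{lemma:lemma13}(\ref{lemma:lemma13_item1}) once $w_i\ge 4$, giving the contradiction. This removal argument is what replaces the unproved (and in general false) convexity claims for windows anchored at $W_1$ or $W_t$, and it also covers the small-$t$ inner cases with a single extremal point of $C$ in $A$.
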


\begin{proof}
Recall that, since $C$ is $\ell$-critical, we have $|B| \ge 4$.
Let $i$ be an integer with $1 \le i \le t$.
First, assume that $A \cap \partial \conv (C) \subset W_i$. 
By Lemma~\ref{lemma:lemma4}(\ref{lemma:lemma4_item1}),
we have $|A \cap \partial \conv (C)| \in \{1,2\}$. 
If $|A \cap \partial \conv (C)|=1$,
then $i \in \{1,t\}$, so there is nothing to prove for~$w_i$. 
In the remaining case $|A \cap \partial \conv (C)|=2$,
by Lemma~\ref{lemma:lemma4}(\ref{lemma:lemma4_item5}) we have $W_i \cap B = \emptyset$, and thus $w_i=0$.

In the remaining case there is a point $a \in A \cap \partial \conv (C)\setminus W_i$. 
We consider $C' \mathrel{\mathop:}= C \setminus \{a\}$.
Since $C$ is an $\ell$-critical set, 
$A' \mathrel{\mathop:}= C' \cap A$ is in convex position.
Thus, there is a non-convex $\astar$-wedge $W'$ of~$C'$.
Since $W'$ is non-convex, all other \mbox{$\astar$-wedges} of~$C'$ are convex.
Moreover, since $W'$ is the union of the two
\mbox{$\astar$-wedges} of~$C$ that contain~$a$, 
all other \mbox{$\astar$-wedges} of~$C'$ are also \mbox{$\astar$-wedges} of~$C$.
Let $W$ be the union of all \mbox{$a^*$-wedges} of~$C$ 
that are not contained in $W'$.
Note that $W$ is convex and contains at least $|A|-3 \ge 3$ 
\mbox{$a^*$-wedges} of~$C$.
Since $|A| \ge 6$, the statement follows from Lemma~\ref{lemma:lemma13}(\ref{lemma:lemma13_item1}).
\end{proof}

\subsubsection{Two extremal points of~\texorpdfstring{\boldmath{$C$}}{C} in~\texorpdfstring{\boldmath{$A$}}{A}}
\label{sec:two_extremal_points_in_A}

\begin{prop}\label{proposition:proposition15}
	Let $C = A \cup B$ be an $\ell$-critical set
	with no $\ell$-divided 5-hole in~$C$, with
	$|A|,|B|\ge 6$, and with 
	$|A \cap \partial \conv(C)| = 2$.
	Then $|B| \le |A|-1$.
\end{prop}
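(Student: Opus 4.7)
The plan is to let $a,a'$ denote the two extremal points of~$C$ in~$A$ and to exploit Lemma~\ref{lemma:lemma4} together with Lemmas~\ref{lemma:lemma13} and~\ref{lemma:lemma14} in a case analysis on the positions of $a$ and~$a'$. By Lemma~\ref{lemma:lemma4}(\ref{lemma:lemma4_item3}) the point $a^*$ is the unique interior point of~$A$, so every $a^*$-wedge is convex and the rays $\overrightarrow{a^*a}$ and~$\overrightarrow{a^*a'}$ are angularly adjacent around~$a^*$; I will write $W_{i^*}$ for the $a^*$-wedge bounded by these two rays, which satisfies $w_{i^*}=0$ by Lemma~\ref{lemma:lemma4}(\ref{lemma:lemma4_item5}), and $R$ for the open region opposite~$W_{i^*}$ across~$a^*$, which by Lemma~\ref{lemma:lemma4}(\ref{lemma:lemma4_item3}) contains all $|A|-3$ non-$\{a^*,a,a'\}$ points of~$A$. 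Since $a^*$ lies to the left of~$\ell$, the wedges $W_1,\dots,W_t$ together tile the open right half-plane with apex~$a^*$; in particular, their total angular width is~$\pi$ and the union of any consecutive subsequence among them is convex. Thus Lemma~\ref{lemma:lemma14} gives $w_k\le 3$ for every $1\le k\le t$, and if some $w_k=3$, applying Lemma~\ref{lemma:lemma13} successively to all three-tuples and four-tuples of consecutive wedges containing~$W_k$ forces $w_j=0$ for every $j\in\{k-3,\dots,k+3\}\cap[1,t]\setminus\{k\}$.

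\textbf{Case 1: both $a,a'$ to the right of~$a^*$.} Here both bounding rays of $W_{i^*}$ point rightward, so $W_{i^*}$ sits in the right half-plane and hence $W_{i^*}\in\{W_1,\dots,W_t\}$, while $R$ lies in the left half-plane. Observation~\ref{observation:observation3} then forces $a,a'$ to be the only non-$a^*$ points of~$A$ to the right of~$a^*$, so $t-1=2$ and $W_{i^*}=W_2$. Therefore $|B|=w_1+w_3$, and Lemma~\ref{lemma:lemma13}(\ref{lemma:lemma13_item1}) applied to $W_1,W_2,W_3$ forces $w_1+w_3\le 4$, contradicting $|B|\ge 6$; hence Case~1 cannot occur.

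\textbf{Cases 2 and 3.} In Case~2 ($a,a'$ both to the left of~$a^*$), the roles of~$W_{i^*}$ and~$R$ are exchanged: $R$ now lies in the right half-plane and contains all $|A|-3$ remaining $A$-points, so by Observation~\ref{observation:observation3} we have $t-1=|A|-3$ and $t=|A|-2$. Corollary~\ref{corollary:corollary8} applied to $[1,t]$ immediately yields $|B|\le t+1=|A|-1$ when every $w_k\le 2$, and the zero-forcing around any $w_{k_0}=3$ combined with Corollary~\ref{corollary:corollary8} on the remaining free sub-intervals gives the smaller bound $|B|\le t=|A|-2$. In Case~3 (exactly one of $a,a'$ on each side of~$a^*$), the wedge $W_{i^*}$ is the unique $a^*$-wedge with one boundary ray going left and one going right, hence $W_{i^*}\in\{W_1,W_t\}$; after a reflection across a horizontal axis I may assume $W_{i^*}=W_1$, so $w_1=0$. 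Corollary~\ref{corollary:corollary8} on the interval $[2,t]$ then gives $|B|=\sum_{k=2}^t w_k\le t$ when every $w_k\le 2$ on that range, and the analogous zero-forcing argument handles any $w_{k_0}=3$. Since $t\le|A|-1$, this yields $|B|\le|A|-1$.

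\textbf{Main obstacle.} The most delicate step is Case~1: the combined force of Lemma~\ref{lemma:lemma4}(\ref{lemma:lemma4_item3}), Lemma~\ref{lemma:lemma4}(\ref{lemma:lemma4_item5}) and Observation~\ref{observation:observation3} is needed to pin down that $t=3$ with $W_{i^*}=W_2$, after which Lemma~\ref{lemma:lemma13}(\ref{lemma:lemma13_item1}) dispatches the case in one line via $|B|\le 4$. Cases~2 and~3 are then a careful but routine combination of Corollary~\ref{corollary:corollary8} with the zero-forcing around any $w_k=3$ wedge provided by Lemma~\ref{lemma:lemma13}.
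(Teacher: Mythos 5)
Your overall plan (bound each $w_k$ by Lemma~\ref{lemma:lemma14}, force zeros around any wedge with three points of $B$ via Lemma~\ref{lemma:lemma13}, and sum with Corollary~\ref{corollary:corollary8}) is the same skeleton as the paper's proof, but it rests on a geometric claim that is false: that $W_1,\dots,W_t$ tile the right half-plane with total angular width $\pi$, so that \emph{every} union of consecutive wedges among them is convex. By Observation~\ref{observation:observation3} the points $a_0$ and $a_t$ lie to the \emph{left} of $\astar$, so the bounding rays $\overrightarrow{\astar a_0}$ and $\overrightarrow{\astar a_t}$ point leftwards while the union $W_1\cup\dots\cup W_t$ must cover all directions crossing~$\ell$; its angle therefore exceeds $\pi$, and consecutive unions containing $W_1$ and/or $W_t$ need not be convex. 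Since Lemma~\ref{lemma:lemma13} explicitly requires convexity of the union, your zero-forcing step is unjustified exactly at the boundary wedges. The correct convexity statements have to be extracted from Lemma~\ref{lemma:lemma4}(\ref{lemma:lemma4_item3}): the paper shows that the complements $\cl(\mathbb{R}^2\setminus(W_{h-1}\cup W_h))$ and $\cl(\mathbb{R}^2\setminus(W_h\cup W_{h+1}))$ are convex (where $W_h$ is the empty wedge between $a$ and $a'$), and only unions of consecutive wedges inside these two regions may be fed into Lemma~\ref{lemma:lemma13}. Your proposal never establishes anything of this kind.

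The gap is not cosmetic. In your Case~1 the union $W_1\cup W_2\cup W_3$ is \emph{never} convex (it is the union of all wedges meeting $\ell$), so Lemma~\ref{lemma:lemma13}(\ref{lemma:lemma13_item1}) cannot be applied and the configuration $t=3$, $w_1=w_3=3$, $|B|=6$ is not contradictory by these tools; for $|A|\ge 7$ it is even compatible with the conclusion $|B|\le|A|-1$, so ``Case~1 cannot occur'' is simply unproved. Likewise, in Cases~2 and~3 with $|A|=6$ the four-wedge window you need coincides with the non-convex union of all wedges except $W_h$ (resp.\ except $W_1$), and the pattern $w_{h-1}=w_{h+1}=3$ with all other $w_k=0$, i.e.\ $|B|=6$, survives every legitimate application of Lemma~\ref{lemma:lemma13} and Corollary~\ref{corollary:corollary8}. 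This is precisely why the paper treats $|A|=6$ separately and kills that configuration with the computer-assisted Lemma~\ref{lemma:lemma11} applied to $C$ minus the rightmost point of~$B$; your argument has no substitute for this step. To repair the proposal you would need (i) a proof of convexity of the relevant wedge unions via Lemma~\ref{lemma:lemma4}(\ref{lemma:lemma4_item3}), restricted to windows avoiding the appropriate boundary wedge, and (ii) an extra argument (e.g.\ Lemma~\ref{lemma:lemma11}) for the $|A|=6$ and Case~1 configurations of type $(3,0,\dots,0,3)$.
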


\begin{proof}
	Since $|A \cap \partial \conv(C)| = 2$,
	Lemma~\ref{lemma:lemma14} implies that 
	$w_i \le 3$ for every $1 \le i \le t$.
	Let $a$ and $a'$ be the two points in $A \cap \partial \conv(C)$.
	By Lemma~\ref{lemma:lemma4}(\ref{lemma:lemma4_item3}), 
	all points of $A \setminus \{a,a'\}$ lie in the convex region~$R$
	that is bounded by the lines $\dline{\astar a}$, $\dline{\astar a'}$, and $\ell$,
	and does not have any of $a$ and $a'$ on its boundary.
	That is, without loss of generality, $a=a_{h-1}$ and $a'=a_h$ for some $1 \le h \le |A|-1$
	and,
	by Lemma~\ref{lemma:lemma4}(\ref{lemma:lemma4_item5}),
	we have $w_h = 0$.
	Since all points of $A \setminus \{a,a'\}$ lie in the convex region~$R$,
	the regions 
	$W \mathrel{\mathop:}= \cl (\mathbb{R}^2 \setminus (W_{h-1} \cup W_h))$
	and 
	$W' \mathrel{\mathop:}= \cl (\mathbb{R}^2 \setminus (W_h \cup W_{h+1}))$ are convex; see~\figurename~\ref{fig:comment28}.
	Here $\cl(X)$ denotes the closure of a set~$X \subseteq \mathbb{R}^2$.
	Recall that the indices of the \mbox{$a^*$-wedges} are considered modulo $|A|-1$
	and that $\mathbb{R}^2$ is the union of all \mbox{$a^*$-wedges}.

	\begin{figure}[htb]
	\centering
		\includegraphics[page=2]{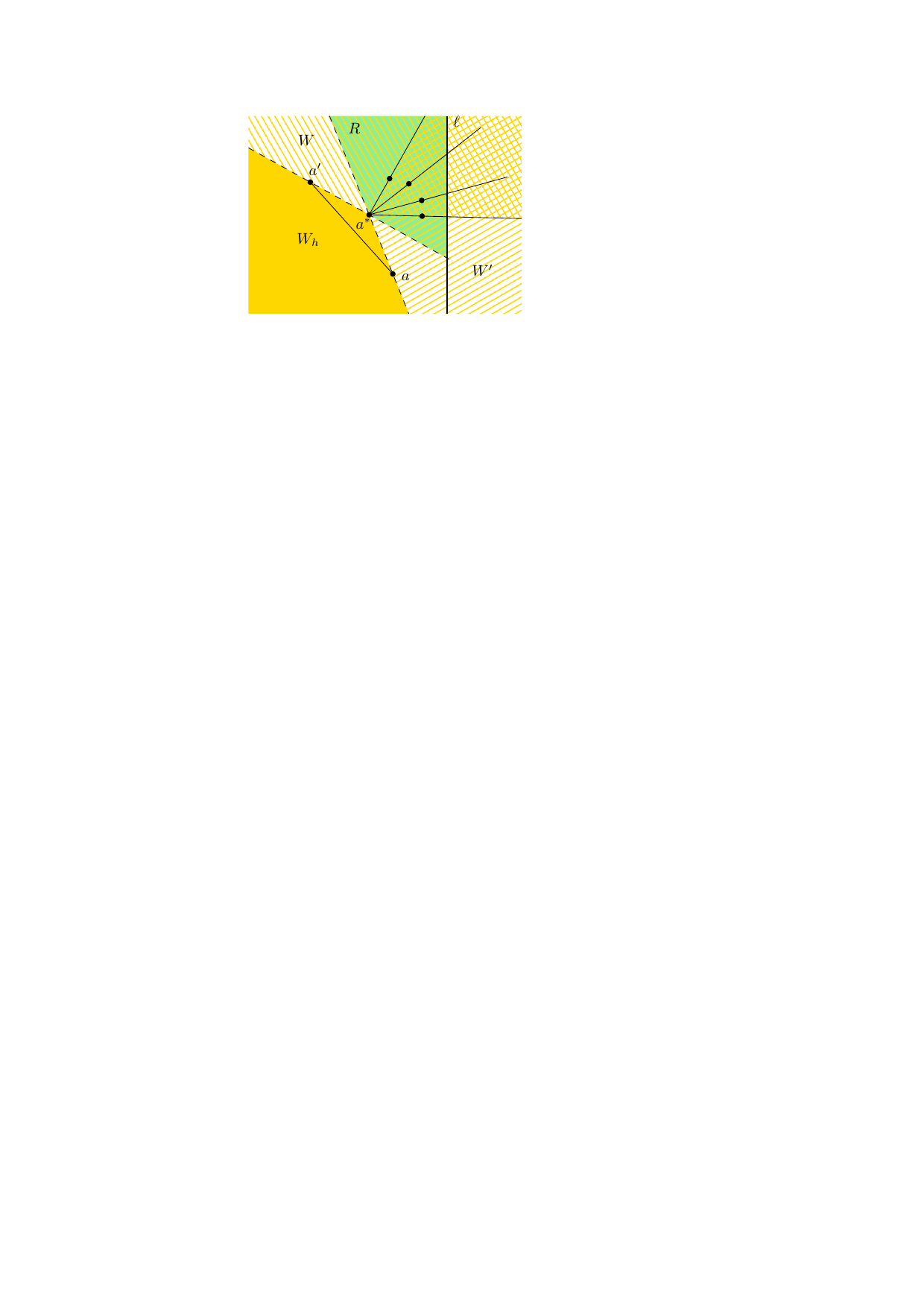}
	\caption{An illustration of the proof of Proposition~\ref{proposition:proposition15}.}
	\label{fig:comment28}
	\end{figure}

	First, suppose for contradiction
	that $|A|=6$.
	There are exactly five \mbox{$a^*$-wedges} $W_1,\ldots,W_5$,
	and only four of them can contain points of~$B$, since $w_h = 0$.
	We apply Lemma~\ref{lemma:lemma13}(\ref{lemma:lemma13_item1})
	to $W$ and to $W'$. An easy case analysis shows
	that either $w_i \le 2$ for every $1 \le i \le t$ 
	or $w_{h-1}, w_{h+1} = 3$ and $w_i=0$ for every $i \not \in \{h-1,h+1\}$.
	In the first case, 
	Corollary~\ref{corollary:corollary8} implies that $|B| \le 5$
	and in the latter case
	Lemma~\ref{lemma:lemma11} applied to $P \setminus \{b\}$, 
	where $b$ is the rightmost point of~$B$,
	gives $|B| \le 5$, a contradiction to $|B| \ge 6$.
	Hence, we assume $|A| \ge 7$.

\begin{claim}\label{claim:claim5}
For $1 \le k \le t-3$, if one of the four consecutive \mbox{$\astar$-wedges} 
$W_k$, $W_{k+1}$, $W_{k+2}$, or $W_{k+3}$ contains 3 points of~$B$, 
then $w_k+w_{k+1}+w_{k+2}+w_{k+3}=3$.
\end{claim}
	There are $|A|-1 \ge 6$ \mbox{$a^*$-wedges}
	and, in particular, 
	$W$ and $W'$ are both unions of at least four \mbox{$a^*$-wedges}.
	For every $W_i$ with $w_i = 3$ and 
	$1 \le i \le t$,
	the $a^*$-wedge $W_i$ is either contained in~$W$ or in~$W'$.
	Thus we can find four consecutive \mbox{$a^*$-wedges} 
	$W_k,W_{k+1},W_{k+2},W_{k+3}$ whose union is convex and contains $W_i$.
	Lemma~\ref{lemma:lemma13}(\ref{lemma:lemma13_item2}) implies
	that each of $W_k,W_{k+1},W_{k+2},W_{k+3}$ 
	except of~$W_i$ is empty of points of~$B$.
	This finishes the proof of Claim~\ref{claim:claim5}.

\begin{claim}\label{claim:claim6}
For all integers $i$ and $j$ with $1 \le i < j \le t$, 
we have $ \sum_{k=i}^j w_k \le j-i+2$.
\end{claim}

Let $S \mathrel{\mathop:}= (w_i,\ldots,w_j)$ 
and let $S'$ be the subsequence of $S$ obtained by removing every $1$-entry from~$S$. 
If $S$ contains only $1$-entries, the statement clearly follows.
Thus we can assume that $S'$ is non-empty.
Recall that, by Lemma~\ref{lemma:lemma14}, 
$S'$ contains only $0$-, $2$-, and $3$-entries, since $w_i \le 3$ for all $1 \le i \le t$. 
Due to Claim~\ref{claim:claim5}, 
there are at least three consecutive $0$-entries 
between every pair of nonzero entries of~$S'$ that contains a $3$-entry.
Together with Lemma~\ref{lemma:lemma7},
this implies that there is at least one $0$-entry between every pair of $2$-entries in~$S'$. 

By applying the following iterative procedure, 
we show that $\sum_{s \in S'} s \leq |S'| +1 $. 
While there are at least two nonzero entries in~$S'$, 
we remove the first nonzero entry $s$ from~$S'$.
If $s=2$, then we also remove the $0$-entry from $S'$ that succeeds~$s$ in~$S$.
If $s=3$, then we also remove the two consecutive $0$-entries from $S'$ that succeed~$s$ in~$S'$.
The procedure stops when there is at most one nonzero element $s'$ in the remaining subsequence $S''$ of~$S'$.
If $s'=3$, then $S''$ contains at least one $0$-entry and thus $S''$ contains at least $s'-1$ elements.
Since the number of removed elements equals the sum of the removed elements in every step of the procedure, 
we have  $\sum_{s \in S'} s \leq |S'| +1$. 
This implies
\[\sum_{k=i}^j w_k = \sum_{s \in S} s = |S|-|S'| + \sum_{s \in S'}s \le |S| - |S'| + |S'|+1 = j-i+2\]
and finishes the proof of Claim~\ref{claim:claim6}.

If $W_h$ does not intersect~$\ell$,
that is, $t < h \le |A|-1$,
then the statement follows from Claim~\ref{claim:claim6} applied with 
$i=1$ and $j=t$.
Otherwise, we have $h=1$ or $h=t$
and we apply Claim~\ref{claim:claim6} with
$(i,j)=(2,t)$ or $(i,j)=(1,t-1)$, respectively.
Since $t \leq |A|-1$ and $w_h = 0$, this gives us $|B| \le |A|-1$.
\end{proof}

\subsubsection{Two extremal points of~\texorpdfstring{\boldmath{$C$}}{C} in~\texorpdfstring{\boldmath{$B$}}{B}}
\label{sec:two_extremal_points_in_B}

\begin{prop}\label{proposition:proposition16}
	Let $C = A \cup B$ be an $\ell$-critical set
	with no $\ell$-divided 5-hole in~$C$, with
	$|A|,|B|\ge 6$, and 
	with $|B \cap \partial \conv(C)| = 2$.
	Then $|B| \leq |A|$.
\end{prop}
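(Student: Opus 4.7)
The plan is to derive Proposition~\ref{proposition:proposition16} from Proposition~\ref{proposition:proposition15} by a reflection argument across the line~$\ell$. Let $\sigma\colon\mathbb{R}^2 \to \mathbb{R}^2$ denote the orthogonal reflection through $\ell$ and set $C' := \sigma(C)$. Since $\sigma$ swaps the two open halfplanes bounded by $\ell$, the image $\sigma(A)$ now lies to the right of $\ell$ and $\sigma(B)$ to the left. To preserve the paper's convention that the left part of an $\ell$-divided set is labeled~$A$, I would relabel by setting $A' := \sigma(B)$ and $B' := \sigma(A)$, so that $C' = A' \cup B'$ is an $\ell$-divided set with $|A'| = |B|$ and $|B'| = |A|$.

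Next I would verify that $C'$ inherits every hypothesis of Proposition~\ref{proposition:proposition15} from $C$. Because $\sigma$ is an isometry, it preserves general position, convex position, triple orientations (up to a global sign), emptiness of $k$-holes, and the extremal points of any subset. Consequently $C'$ is $\ell$-critical, since both defining conditions of $\ell$-criticality are reflection-invariant; any $\ell$-divided $5$-hole in $C'$ would pull back via $\sigma^{-1}$ to one in $C$, so $C'$ also contains no such hole; the sizes satisfy $|A'|,|B'| \ge 6$; and, crucially, $|A' \cap \partial\conv(C')| = |B \cap \partial\conv(C)| = 2$, matching the extremal-point assumption of Proposition~\ref{proposition:proposition15}.

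Applying Proposition~\ref{proposition:proposition15} to $C'$ then gives $|B'| \le |A'| - 1$, and substituting $|A'| = |B|$ and $|B'| = |A|$ produces the inequality between $|A|$ and $|B|$ required by Proposition~\ref{proposition:proposition16}. The only part that takes any real care, and what I would identify as the main (essentially notational) obstacle, is to check that every combinatorial ingredient of the proof of Proposition~\ref{proposition:proposition15} has a mirror-symmetric analogue under~$\sigma$: the $a^*$-wedges around the rightmost inner point of $A'$ in $C'$ correspond via $\sigma^{-1}$ to wedges around the leftmost inner point of $B$ in $C$; the cyclic labeling $W_1, W_2, \ldots$ is reversed; and the auxiliary Lemmas~\ref{lemma:lemma5}--\ref{lemma:lemma14} are all stated in terms of such intrinsic data and admit the same mirror-symmetric counterparts. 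Once this bookkeeping is confirmed, the reduction is immediate and no further argument is needed.
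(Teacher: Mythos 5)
Your reduction does not prove the statement; the inequality comes out in the wrong direction. Track it through your relabeling: after reflecting and setting $A' := \sigma(B)$, $B' := \sigma(A)$, Proposition~\ref{proposition:proposition15} applied to $C'$ gives $|B'| \le |A'| - 1$, and substituting $|A'| = |B|$, $|B'| = |A|$ yields $|A| \le |B| - 1$, i.e.\ $|B| > |A|$ --- which is not the desired conclusion $|B| \le |A|$ but its negation. Reflection merely converts Proposition~\ref{proposition:proposition15} into its label-swapped form (``if $|B \cap \partial\conv(C)| = 2$ then $|A| \le |B|-1$''), and that is a genuinely different statement from Proposition~\ref{proposition:proposition16}. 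The two propositions are deliberately asymmetric: in Proposition~\ref{proposition:proposition15} the two hull points of $C$ lie in the same part that contains the wedge-defining point $a^*$, so Lemma~\ref{lemma:lemma4}(\ref{lemma:lemma4_item3}) and (\ref{lemma:lemma4_item5}) force $a^*$ to be the unique interior point of that part and give an $a^*$-wedge empty of points of the other part, which is what buys the stronger bound ``$-1$''; in Proposition~\ref{proposition:proposition16} the two hull points lie in $B$, the part whose size is being bounded, while the wedges are still taken around $a^* \in A$, no empty wedge is guaranteed, and only the weaker bound $|B| \le |A|$ is claimed --- and it needs its own argument. A quick sanity check confirms this: in Section~\ref{section_final} the paper applies Proposition~\ref{proposition:proposition15} and, after exchanging the roles of the two parts, Proposition~\ref{proposition:proposition16} to the \emph{same} $\ell$-critical island and derives the contradiction $|Q\cap B| < |Q \cap A| \le |Q\cap B|$; if Proposition~\ref{proposition:proposition16} were just the mirror image of Proposition~\ref{proposition:proposition15}, that step would produce the same inequality twice rather than two incompatible ones.

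What is missing, therefore, is the entire substantive content of the proposition. The paper's proof handles the case $w_k \le 2$ for all $k$ directly by Corollary~\ref{corollary:corollary8} ($|B| \le t+1 \le |A|$), and otherwise fixes a wedge $W_i$ with $w_i \ge 3$ and shows through Claims~\ref{claim:claim7}--\ref{claim:claim10} that the three leftmost points of $W_i \cap B$ are in fact the three leftmost points of $B$ and that $W_i$ is the only wedge with three or more points of $B$ (this is where the hypothesis $|B \cap \partial\conv(C)| = 2$ and $\ell$-criticality of $C$ are used, to rule out a second ``deep'' configuration that would survive deletion of an extremal point of $B$); Claim~\ref{claim:claim11} then bounds by $4$ the number of points of $B$ in any four consecutive wedges containing $W_i$, via Lemma~\ref{lemma:lemma5} and the computer-assisted Lemma~\ref{lemma:lemma11}, and the remaining wedges are handled by Lemma~\ref{lemma:lemma13}(\ref{lemma:lemma13_item2}) and Corollary~\ref{corollary:corollary8}. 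None of this is obtainable by the bookkeeping you describe; however carefully the mirror-symmetric counterparts of Lemmas~\ref{lemma:lemma5}--\ref{lemma:lemma14} are set up, a reflection cannot turn the asymmetric conclusion of Proposition~\ref{proposition:proposition15} into the differently asymmetric conclusion of Proposition~\ref{proposition:proposition16}.
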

	
\begin{proof}
	If $w_k \leq 2$ for all $1 \leq k \leq t$,
	then the statement follows 
	from Corollary~\ref{corollary:corollary8},
	since $|B| = \sum_{k=1}^t w_k \leq t + 1 \leq |A|$.
	Therefore we assume that there is an $\astar$-wedge $W_i$
	that contains at least three points of~$B$.
	Let $b_1$, $b_2$, and~$b_3$ be
	the three leftmost points in $W_i \cap B$ 
	from left to right.
	Without loss of generality,
	we assume that $b_3$ is to the left of $\dline{b_1b_2}$.
	Otherwise we can consider a vertical reflection of~$P$.
	Figure~\ref{fig:no3_first} gives an illustration.
	
	\begin{figure}[htb]
	\centering
	\includegraphics{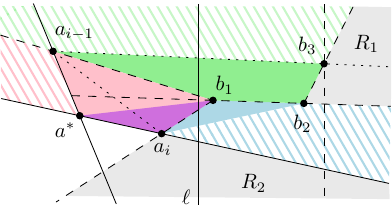}
	\caption{An illustration of the proof of Proposition~\ref{proposition:proposition16}.}
	\label{fig:no3_first}
	\end{figure}

	Let $R_1$ be the region 
	that lies
	to the left of $\overline{b_1b_2}$ 
	and to the right of $\overline{b_2b_3}$
	and let $R_2$ be the region 
	that lies 
	to the right of $\overline{a_ib_1}$ 
	and to the right of $\overline{a^*a_i}$.
	Let $B' \mathrel{\mathop:}= B \setminus \{b_1,b_2,b_3\}$.
	
	\begin{claim}\label{claim:claim7}
	Every point of $B'$
	lies in $R_1 \cup R_2$.
	\end{claim}
	We first show that 
	every point of $B'$ that 
	lies to the left of $\dline{b_1b_2}$ lies in $R_1$.
	Then we show that 
	every point of $B'$ that 
	lies to the right of $\dline{b_1b_2}$ lies in $R_2$.
	
	By Observation~\ref{observation:observation5},
	both lines $\dline{b_1b_2}$ and $\dline{b_1b_3}$ 
	intersect the segment $a_{i-1}a_i$.
	Since the segment $a_{i-1}b_1$ intersects~$\ell$ and since $b_1$ is the leftmost point of $W_i \cap B$,
	all points of~$B'$ that lie to the left of $\overline{b_1b_2}$
	lie to the left of $\overline{a_{i-1}b_1}$.
	The four points $a_{i-1},b_1,b_2,b_3$ 
	form an $\ell$-divided 4-hole in~$P$,
	since $a_{i-1}$ is the leftmost and 
	$b_3$ is the rightmost point of $a_{i-1},b_1,b_2,b_3$ 
	and both $a_{i-1}$ and $b_3$ lie to the left of $\overline{b_1b_2}$.
	By Observation~\ref{observation:observation1}(\ref{observation:observation1_item1}), 
	the sector  
	$S(a_{i-1},b_1,b_2,b_3)$ is empty of points of~$P$
	(green shaded area in Figure~\ref{fig:no3_first}).
	Altogether, 
	all points of~$B'$ that lie to the left of $\overline{b_1b_2}$
	are to the right of $\dline{b_2b_3}$ and thus lie in $R_1$.
	
	Since the segment $a_ib_1$
	intersects $\ell$ and since $b_1$ is the leftmost point of $W_i \cap B$,
	all points of~$B'$ that lie to the right of $\overline{b_1b_2}$
	lie to the right of $\overline{a_ib_1}$.
	By Observation~\ref{observation:observation1}(\ref{observation:observation1_item1}), 
	the sector $S(b_1,b_2,b_3,a_{i-1})$ is empty of points of~$P$.
	Combining this with the fact that $a^*$ is to the right of $\overline{a_{i-1}b_3}$, we see that $a^*$ lies to the right of $\dline{b_1b_2}$.
	Since $b_1$ and $b_2$ both lie to the left of $\dline{a^*a_i}$
	and since $a^*$ and $a_i$ both lie to the right of $\dline{b_1b_2}$,
	the points $b_2,b_1,a^*,a_i$ 
	form an $\ell$-divided 4-hole in~$P$.	
	By Observation~\ref{observation:observation1}(\ref{observation:observation1_item1}), 
	the sector
	$S(b_2,b_1,a^*,a_i)$ 
	(blue shaded area in Figure~\ref{fig:no3_first})
	is empty of points of~$P$.
	Altogether, 
	all points of~$B'$ that lie to the right of $\overline{b_1b_2}$
	are to the right of $\dline{a^*a_i}$ and to the right of $ \dline{a_ib_1}$
	and thus lie in $R_2$.
	This finishes the proof of Claim~\ref{claim:claim7}.
		
	\begin{claim}\label{claim:claim8}
	If $b_4$ is a point from $B' \setminus R_1$,
	then $b_2$ lies inside the triangle $\triangle( b_3,b_1, b_4)$.
	\end{claim}
	
	By Claim~\ref{claim:claim7},
	$b_4$ lies in $R_2$ and 
	thus to the right of $\dline{a_ib_1}$
	and to the right of $\dline{a^*a_i}$. 
	We recall that $b_4$ lies to the right of $\dline{b_1b_2}$.
	
	We distinguish two cases.
	First, we assume 
	that the points $b_2,b_3,b_1,a_i$ 
	are in convex position. 
	Then $b_2,b_3,b_1,a_i$ form an $\ell$-divided 4-hole in $P$
	and, by Observation~\ref{observation:observation1}(\ref{observation:observation1_item1}), 
	the sector $S(b_2,b_3,b_1,a_i)$ is empty of points from~$P$.
	Thus $b_4$ lies to the right of $\overline{b_2b_3}$
	and the statement follows.
	
	Second, we assume that the points $b_2,b_3,b_1,a_i$ 
	are not in convex position.
	Due to Observation~\ref{observation:observation5},
	$b_2$ and $b_3$ both lie to the right of $\dline{a_ib_1}$.
	Moreover, since $b_3$ is the rightmost of those four points,
	$b_2 $ lies inside the triangle $\triangle(b_3,b_1,a_i)$.
	In particular,
	$a_i$ lies to the right of $\overline{b_2b_3}$.
	Therefore, since $b_2$ and $b_3$ are to the left of $\dline{a^*a_i}$, 
	the line $\dline{b_2b_3}$ intersects $\ell$ in a point $p$ 
	above $\ell \cap \dline{a^*a_i}$.
	Let $q$ be the point $\ell \cap  \dline{b_1b_2}$.
	Note that $q$ is to the left of $\dline{a^*a_i}$.
	The point $b_4$ is to the right of $\overline{b_2b_3}$, 
	as otherwise $b_4$ lies in $\triangle(p,q,b_2)$, 
	which is impossible
	because the points $p,q,b_2$ are in $W_i$ while $b_4$ is not.
	Altogether,
	$b_2$ is inside $\triangle(b_3,b_1,b_4)$
	and this finishes the proof of Claim~\ref{claim:claim8}.
	
	\begin{claim}\label{claim:claim9}
	Either every point of $B'$ is to the right of $b_3$
	or $b_3$ is the rightmost point of~$B$.
	\end{claim}	
	
	By Observation~\ref{observation:observation1}(\ref{observation:observation1_item1}),
	the sector $S(b_3,a_{i-1},b_1,b_2)$ is empty of points of~$P$ and thus
	all points of $B' \cap R_1$ lie to the left of~$\dline{a_{i-1}b_3}$ and,
	in particular, to the right of~$b_3$.
	
	Suppose for contradiction that the claim is not true.
	That is, there is a point $b_4 \in B'$ that is the rightmost point in~$B$
	and there is a point $b_5 \in B'$ that is to the left of~$b_3$.
	Note that $b_4$ is an extremal point of~$C$.
	By Claim~\ref{claim:claim7} and by the fact that all points of $B' \cap R_1$
	lie to the right of~$b_3$,
	$b_5$ lies in $R_2 \setminus R_1$.	
	By Claim~\ref{claim:claim8}, 
	$b_2$ lies in the triangle $\triangle(b_1,b_5,b_3)$,
	and thus
	$B \setminus \{b_4\}$ is not in convex position.
	This contradicts the assumption that 
	$C$ is an $\ell$-critical set.
	This finishes the proof of Claim~\ref{claim:claim9}.
	
	\begin{claim}\label{claim:claim10}
	 The point $b_3$ is the third leftmost point of~$B$.
	 In particular, 
	 $W_i$ is the only $a^*$-wedge with at least three points of~$B$.
	\end{claim}
	
	Suppose for contradiction that $b_3$ is not the third leftmost point of~$B$.
	Then by Claim~\ref{claim:claim9}, $b_3$ is the rightmost point of $B$
	and therefore an extremal point of~$B$.
	This implies that $B'\subseteq R_2 \setminus R_1$, since all points of $B' \cap R_1$ lie to the right of~$b_3$.
	By Claim~\ref{claim:claim8}, each point of $B'$ then forms a non-convex quadrilateral
	together with $b_1$, $b_2$, and $b_3$.
	Since neither $b_1$ nor $b_2$ are extremal points of $C$ and since $|B \cap \partial\conv(C)|=2$, 
	there is a point $b_4 \in B$ that is an extremal point of~$C$.
	Since $|B| \ge 5$, the set $C \setminus \{b_4\}$ has none of its parts separated by $\ell$ in convex position,
	which contradicts the assumption that $C$ is an $\ell$-critical set.
	Since $W_i$ is an arbitrary $a^*$-wedge with $w_i \ge 3$,
	Claim~\ref{claim:claim10} follows.
	
	\begin{claim}\label{claim:claim11}	
	Let $W$ be a union of four consecutive \mbox{$\astar$-wedges} 
	that contains~$W_i$.
	Then \mbox{$|W \cap B| \le 4$}.
	\end{claim}
	
	Suppose for contradiction that $|W \cap B| \ge 5$.
	Let $C' \mathrel{\mathop:}= C \cap W$.
	Note that $|C' \cap A| = 6$ 
	and that $a^*,a_{i-1},a_i$ lie in $C'$.
	By Lemma~\ref{lemma:lemma5}, 
	there is no $\ell$-divided 5-hole in~$C'$.
	We obtain $C''$ by removing points from $C'$ 
	from the right, if necessary, until $|C'' \cap B| = 5$. 
	Since $C''$ is an island of~$C'$,
	there is no $\ell$-divided 5-hole in~$C''$.
	From Claim~\ref{claim:claim10} we know that
	$b_1,b_2,b_3$ are the three leftmost points in~$C$
	and thus lie in~$C''$.
	We apply Lemma~\ref{lemma:lemma11} to~$C''$
	and, since $b_1,b_2,b_3$ lie in a convex $a^*$-wedge of~$C''$,
	we obtain a contradiction.
	This finishes the proof of Claim~\ref{claim:claim11}.
	
	We now complete the proof of Proposition~\ref{proposition:proposition16}. 
	First, we assume that $1 \le i \le 4$.
	Let $W \mathrel{\mathop:}= W_1 \cup W_2 \cup W_3 \cup W_4$.
	By Claim~\ref{claim:claim11}, $|W \cap B| \le 4$.
	Claim~\ref{claim:claim10} implies that
	$w_k \le 2$ for every $k$ with $5 \le k \le t$.
	By Corollary~\ref{corollary:corollary8}, we have
	\[
	|B| = \sum_{k=1}^4 w_k + \sum_{k=5}^t w_k \le 4 + (t-3) = t+1 \le |A|.
	\]
	The case $t-3 \le i \le t$ follows by symmetry.

	Finally, we assume that $5 \le i \le t-4$.
	Let $W \mathrel{\mathop:}= W_{i-3} \cup W_{i-2} \cup W_{i-1} \cup W_i$.
	Note that $W$ is convex, since $2 \le i-3$ and $i <t$. 
	By Lemma~\ref{lemma:lemma13}(\ref{lemma:lemma13_item2}), we have $w_{i-3}+w_{i-2}+w_{i-1}+w_i \le 3$ and $w_i+w_{i+1}+w_{i+2}+w_{i+3} \le 3$.
	By Claim~\ref{claim:claim10}, 
	$w_k \le 2$ for all $k$ with $1 \le k \le i-4$.
	Thus, by Corollary~\ref{corollary:corollary8},
	$\sum_{k=1}^{i-4} w_k \le i - 3$.
	Similarly, we have $\sum_{k=i+4}^t w_k \le t - i - 2$.
	Altogether, we obtain that
	\[
	|B| 
	= \sum_{k=1}^{i-4} w_k 
	+ \sum_{k=i-3}^{i-1} w_k + w_i 
	+ \sum_{k=i+1}^{i+3} w_k 
	+ \sum_{k=i+4}^t w_k
	\leq (i-3)  + 3 + (t - i - 2) 
	= t-2 \leq |A| - 3.
	\]
\end{proof}

\subsection{Finalizing the proof of Theorem~\ref{theorem:theorem2}}
\label{section_final}

We are now ready to prove Theorem~\ref{theorem:theorem2}.
Namely, we show that for every $\ell$-divided set $P=A\cup B$ 
with $|A|, |B| \geq 5$ and with neither 
$A$ nor $B$ in convex position
there is an $\ell$-divided 5-hole in~$P$.

Suppose for the sake of contradiction 
that there is no $\ell$-divided 5-hole in~$P$.
By the result of Harborth~\cite{Ha78},
every set $P$ of ten points contains a 5-hole in~$P$.
In the case $|A|,|B|=5$, 
the statement then follows from the assumption that 
neither of $A$ and $B$ is in convex position.

So assume that at least one of the sets $A$ and $B$ has at least six points.
We obtain an island $Q$ of~$P$ 
by iteratively removing extremal points so that neither part is in convex position after the removal 
and until one of the following conditions holds.

\begin{enumerate}[(i)]
	\item\label{case:5_n} One of the parts $Q \cap A$ and $Q \cap B$ 
	has only five points. 
	\item\label{case:critical} $Q$ is an $\ell$-critical island of~$P$ 
	with $|Q \cap A|, |Q \cap B| \geq 6$.
\end{enumerate}

In case~(\ref{case:5_n}), 
we have $|Q \cap A|=5$ or $|Q \cap B|=5$.
We can assume by symmetry that 
$|Q \cap A|=5$ and $|Q \cap B| \ge 6$.
We let $Q'$ be the union of $Q \cap A$ 
with the six leftmost points of $Q \cap B$.
Since $Q \cap A$ is not in convex position, Lemma~\ref{lemma:lemma9} implies that
there is an $\ell$-divided 5-hole in~$Q'$, 
which is also an $\ell$-divided 5-hole in~$Q$,
since $Q'$ is an island of~$Q$.
However, this is impossible as then there is an $\ell$-divided 5-hole in~$P$ because $Q$ is an island of~$P$.

In case~(\ref{case:critical}),
we have $|Q \cap A|,|Q \cap B| \ge 6$.
There is no $\ell$-divided 5-hole in~$Q$, since $Q$ is an island of~$P$.
By Lemma~\ref{lemma:lemma4}(\ref{lemma:lemma4_item1}), 
we can assume without loss of generality that \mbox{$|A \cap \partial \conv(Q)| = 2$},
as $|A \cap \partial \conv(Q)|+|B \cap \partial \conv(Q)|\ge 3$ and 
thus $|A \cap \partial \conv(Q)|$ and $|B \cap \partial \conv(Q)|$ cannot be both smaller than~2.
Then it follows from Proposition~\ref{proposition:proposition15} 
that $|Q \cap B| < |Q \cap A|$.
By exchanging the roles of $Q \cap A$ and $Q \cap B$ 
and by applying Proposition~\ref{proposition:proposition16}, 
we obtain that $|Q \cap A| \le |Q \cap B|$, a contradiction.
This finishes the proof of Theorem~\ref{theorem:theorem2}.

\section{Final Remarks}
\label{section_final_remarks}

At a first glance, 
it might seem that a similar approach could be used 
to derive stronger lower bounds also on the minimum number of 6-holes~$h_6(n)$.
However, since there are point sets of 29 points with no 6-hole~\cite{Overmars2002},
one would need to investigate point sets of size at least~30 
in order to find an $\ell$-divided 6-hole.
This task is too demanding for our implementations, 
since the number of combinatorially different point sets grows too rapidly.
Moreover, the case analysis in several steps of our proof would become much more complicated.

\iffalse
In the statement of Theorem~\ref{theorem:theorem2} 
we require that the $\ell$-divided set $P = A \cup B$ 
satisfies $|A|,|B|\ge 5$.
In the arXiv version~\cite{arxiv_version}
we show that those requirements are necessary 
in order to guarantee an $\ell$-divided 5-hole in~$P$
by constructing 
an arbitrarily large $\ell$-critical set
$C = A \cup B$ with $|A|=4$ and with no $\ell$-divided 5-hole in~$C$.
Moreover, we prove the necessity of the assumptions in Lemmas~\ref{lemma:lemma9} to \ref{lemma:lemma12}.

\else
\subsection{Necessity of the assumptions in Theorem~\ref{theorem:theorem2}}
\label{section_necessity_of_theorem2}

In the statement of Theorem~\ref{theorem:theorem2} 
we require that the $\ell$-divided set $P = A \cup B$ 
satisfies $|A|,|B|\ge 5$.
We now show that 
those requirements are necessary 
in order to 
guarantee an $\ell$-divided 5-hole in~$P$
by constructing 
an arbitrarily large $\ell$-critical set
$C = A \cup B$ with $|A|=4$ and with no $\ell$-divided 5-hole in~$C$.

\begin{prop}\label{thm:carrotConstruction}
For every integer~$n\ge 5$,
there exists an $\ell$-critical set 
$C = A \cup B$ with $|A|=4$, $|B|=n$,
and with no $\ell$-divided 5-hole in~$C$.
\end{prop}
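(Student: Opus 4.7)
The plan is to provide an explicit ``carrot''-shaped family of configurations $C = A \cup B$ for every $n \ge 5$ and verify the required properties. I would place $A = \{a_1, a_2, a_3, a_4\}$ so that $a_1, a_2, a_3$ form a long thin isoceles triangle having apex $a_3$ far to the left of $\ell$ and base segment $\overline{a_1 a_2}$ close to $\ell$ at heights $\pm 1$; the fourth point $a_4$ would lie in $\triangle(a_1 a_2 a_3)$, placed just to the right of $a_3$. For $B$, I would take $n - 1$ points in convex position on a narrow convex arc just to the right of $\ell$, together with one interior point $b^*$ placed in the ``ear'' at a single chosen extremal vertex $b_0$ of $\mathrm{conv}(B \setminus \{b^*\})$. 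Coordinates would be tuned so that $a_1, a_2, a_3, b_0$ are exactly the four extremal points of $\mathrm{conv}(C)$, with $a_4$ and all other points of $B$ strictly interior to $\mathrm{conv}(C)$.

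Verification of $\ell$-criticality is then routine. Neither $A$ nor $B$ is in convex position because of $a_4$ and $b^*$ respectively. For each extremal point $x$ of $C$: if $x \in \{a_1, a_2, a_3\}$, then $A \setminus \{x\}$ has only three points and is automatically in convex position; if $x = b_0$, then $B \setminus \{b_0\}$ is in convex position by the choice of $b^*$ lying in the ear at $b_0$, which becomes extremal once $b_0$ is removed.

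The main obstacle is to rule out $\ell$-divided 5-holes in $C$. I would proceed by case analysis on the pair $(|H \cap A|, |H \cap B|)$ for a hypothetical $\ell$-divided 5-hole $H$. The split $(4, 1)$ is immediate because $A$ is not in convex position. For the split $(3, 2)$, the only convex triple in $A$ is $\{a_1, a_2, a_3\}$, since any triple containing $a_4$ together with two points of $B$ forces $a_4$ to lie in the interior of $\mathrm{conv}(H)$; but the triple $\{a_1, a_2, a_3\}$ gives $\triangle(a_1 a_2 a_3) \subseteq \mathrm{conv}(H)$, which contains $a_4$, a contradiction. The truly delicate cases are $(2, 3)$ and $(1, 4)$: here one must exploit the quantitative details of the carrot geometry to show that any convex pentagon with such a split either contains $a_4$ or $b^*$ in its interior, or contains an intermediate $B$-vertex lying between the chosen ones on the convex arc.

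The hardest part will be the $(2, 3)$ and $(1, 4)$ cases, where many geometric subcases arise depending on which two (resp.\ one) $A$-points and three (resp.\ four) $B$-points are used. To handle them, the positions of $a_4$ and of the convex $B$-arc must be chosen so that, in each subcase, the forced ``width'' of the pentagon between $A$ and $B$ engulfs a guaranteed interior witness. Tuning the slope of the $B$-arc, the distance from $b^*$ to $b_0$, and the displacement of $a_4$ from the apex $a_3$ should yield such a configuration uniformly in $n$.
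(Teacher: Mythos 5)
Your plan has the right skeleton (exhibit a configuration, check $\ell$-criticality, rule out $\ell$-divided 5-holes by cases on $|H\cap A|$), and the splits $(4,1)$ and $(3,2)$ are indeed immediate. But the entire content of the proposition sits in the cases $(2,3)$ and $(1,4)$, and there your argument stops at ``coordinates would be tuned so that every such pentagon engulfs a witness.'' This is not a routine tuning claim, and with your choice of $B$ ($n-1$ points on a convex arc plus a single interior point $b^*$) it is far from clear that any tuning exists. Concretely: if the arc bulges away from $A$, then every arc point is a vertex of $\conv(C)$, and removing any such vertex other than the one whose ear contains $b^*$ leaves both parts non-convex, so $\ell$-criticality fails; moreover a far-left point of $A$ together with four consecutive arc points then forms a convex pentagon with no available witness. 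If instead the arc bulges toward $\ell$ (which you need for your claimed extremal structure ``only $b_0$ extremal in $B$''), then for the $(1,4)$ case with $a_1$, which you place very close to $\ell$, the triangle spanned by $a_1$ and two arc points is extremely thin, and the intermediate points of a run of consecutive arc points can escape it and yield a convex \emph{empty} pentagon unless the bulge is tiny compared to the gap between $a_1$ and the arc --- a constraint that works against the $(2,3)$ cases, where you need intermediate arc points (or $b^*$, or $a_4$) to fall \emph{inside} the quadrilaterals formed with two points of $A$. Pentagons that use $b^*$ together with arc points near $b_0$, or both $a_1$ and $a_2$, have no designated interior witness at all in your description, and with only two non-extremal special points ($a_4$ and $b^*$) most of the blocking must come from skipped arc points, which your sketch never verifies. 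Nothing certifies that all these requirements are simultaneously satisfiable, uniformly in $n$.

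This is exactly where the paper's construction differs in an essential way: there $B$ is not ``convex arc plus one interior point'' but an alternating zigzag $b_1,\dots,b_n$ on two lines converging away from $A$ (afterwards perturbed onto two parabolas, with one point $b_3$ nudged inside), and $A$ consists of two pairs of points glued to the two apexes of the zigzag. The zigzag provides the witness mechanism automatically: for any chosen points on one branch and any apex point, the successor $b_{i+1}$ of a chosen $b_i$ lies inside the relevant triangle, so every candidate $(2,3)$- or $(1,4)$-pentagon is either non-convex or non-empty, and this is checked case by case in the paper (with a separate small argument for even $n$). Your proposal is missing precisely this mechanism, so as it stands it is a plausible-looking plan with the crucial step unproven, and with concrete geometric tensions suggesting the proposed family may not be repairable without changing its structure.
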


\begin{proof}
First, we consider the case where $n$ is odd.
Let $p^+ =(0,1)$ and $p^- = (0,-1)$ be two auxiliary points 
and let 
$\ell^+ = \{(x,y) \in \mathbb{R}^2 \colon y=x/4 \}$ 
and 
$\ell^- = \{(x,y) \in \mathbb{R}^2 \colon y=-x/4 \}$ 
be two auxiliary lines.
We place the point $b_1' = (2,-1/2)$ on the line $\ell^-$
and the auxiliary point $q = (2,1/2)$ on the line $\ell^+$.
For $i=2,\ldots,n$, 
we iteratively let
$b_i'$ be the intersection of the line $\ell^+ $ with the segment ${p^+ b_{i-1}' }$ if $i$ is even and
the intersection of $ \ell^-$ with ${p^- b_{i-1}'}$ if $i$ is odd.
We place two points $a_1$ and $a_2$ sufficiently close to $p^+$
so that $a_1$ is above $a_2$, 
the segment ${a_1a_2}$ is vertical with the midpoint $p^+$,
and all non-collinear triples $(b_i',b_j',p^+)$ 
have the same orientation as $(b_i',b_j',a_1)$ and $(b_i',b_j',a_2)$.
Similarly, we place two points $a_3$ and $a_4$ sufficiently close to $p^-$
so that $a_3$ is to the left of $a_4$, 
the segment $a_3a_4$ lies on the line $\overline{p^-q}$ and has $p^-$ as its midpoint,
the point $a_4$ is to the left of~$b_n'$,
and all non-collinear triples $(b_i',b_j',p^-)$ 
have the same orientation as $(b_i',b_j',a_3)$ and $(b_i',b_j',a_4)$.
Figure~\ref{fig:new_carrot} gives an illustration.

\begin{figure}[htb]
	\centering
	\includegraphics[width=0.7\textwidth]{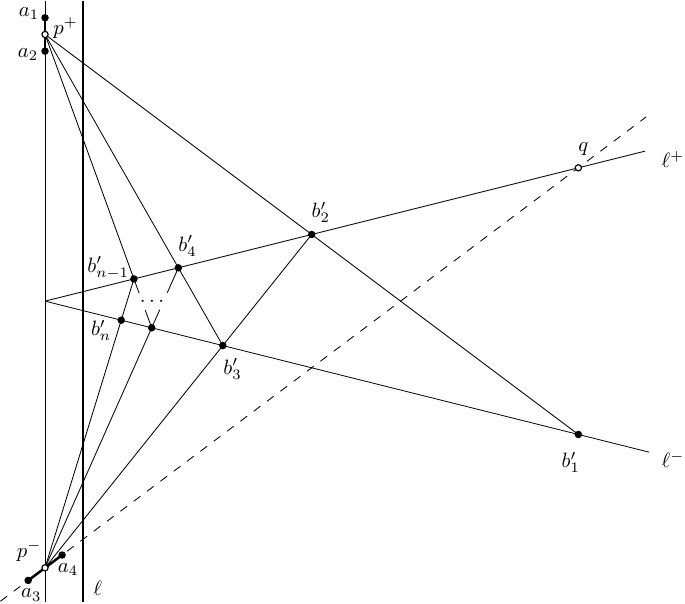}
	\caption{The set $C$ constructed in the proof of Proposition~\ref{thm:carrotConstruction} for $n$ odd.}
	\label{fig:new_carrot}
\end{figure}

We let~$A$, $B'$, and $B_3'$ be the sets $\{a_1,a_2,a_3,a_4\}$, $\{b_1' ,\ldots,b_n'\}$, and $B' \setminus \{b_3'\}$, respectively.
Note that the line $\overline{a_3a_4}$ intersects the segment $b_1'b_3'$.
Since $\max_{a \in A}x(a) < \min_{b' \in B'} x(b')$, 
the sets $A$ and $B'$ are separated by a vertical line~$\ell$.

Next we slightly perturb $b_3'$ to obtain a point $b_3$ such that 
$b_3$ lies above $\ell^-$ and all non-collinear triples $(b_3,c,d)$ with $c,d \in A \cup B_3'$ 
have the same orientation as $(b_3',c,d)$.
Note that the point $b_3$ lies in the interior of $\conv(B_3')$, since $n \ge 5$.

To ensure general position,
we transform 
every point $b_i'=(x,y) \in B_3' \cap \ell^+$ to $b_i=(x,y-\varepsilon x^2)$ and
every point $b_i'=(x,y) \in B_3' \cap \ell^-$ to $b_i=(x,y+\varepsilon x^2)$
for some $\varepsilon>0$.
The remaining points in $A \cup \{ b_3 \}$ remain unchanged. 
We choose $\varepsilon$ sufficiently small 
so that all non-collinear triples of points from $A \cup B_3' \cup \{b_3\}$ 
have the same orientations as their images after the perturbation.
Finally, let $B$ be the set $\{b_1,\ldots,b_n\}$ and set $B_3 \mathrel{\mathop:}= B \setminus \{b_3\}$.

Since the points from $B_3$ lie on two parabolas,
the set $B$ is in general position.
In particular,
points from $B_3$ are in convex position and
the point $b_3$ lies inside $\conv(B_3)$.
Also observe that the line $\ell$ separates $A$ and $B$
and that
$a_1$,
$a_3$,  and
$b_1$
are the extremal points of $C \mathrel{\mathop:}= A \cup B$.
Since neither of the sets $A$ and $B$ is in convex position,
and removal of any of the extremal points $a_1,a_3,b_1$ leaves either $A$ or $B$ in convex position,
the set $C = A \cup B$ is $\ell$-critical.

We now show that
$C$ contains no $\ell$-divided 5-hole.
Suppose for contradiction that there is an $\ell$-divided 5-hole $H$ in~$C$.
We set 
$A^+ \mathrel{\mathop:}= \{a_1,a_2\}$, 
$A^- \mathrel{\mathop:}= \{a_3,a_4\}$,
$B^+ \mathrel{\mathop:}= \{b_2,b_4,\ldots,b_{n-1}\}$, and 
$B^- \mathrel{\mathop:}= \{b_1,b_3,\ldots,b_n\}$.
First we assume that $H$ contains points from both $A^+$ and~$A^-$.
Then $H\cap B \subseteq \{b_{n-1},b_n\}$, since if there is a point $b_i$ in~$H$ 
with $i<n-1$, then $b_n$ lies in the interior of $\conv(H)$.
Note that if $H \cap B = \{b_{n-1},b_n\}$, then neither $a_4$ nor $a_1$ lies in~$H$ and thus $|H|<5$.
Hence $|H \cap B| = 1$, which is again impossible, as $H$ cannot contain all points from~$A$.
Therefore we either have $H \cap A \subseteq A^+$ or $H \cap A \subseteq A^-$ and, in particular, $1 \le |H \cap A|\le 2$.

We now distinguish the following two cases.

\begin{enumerate}
\item 
$|H \cap A| = 2$.
If $H\cap A=A^+$,
then the hole $H$ can contain only the point $b_n$ from~$B^-$.
This is because if there is a point $b_i $ in $ H \cap B^-$ with $i<n$,
then the point $b_{i+1}$ lies in the interior of~$\conv(H)$.
Additionally, $H$ contains at most two points from $B^+$,
since otherwise $H$ is not in convex position.
Consequently, $b_n$ lies in $H$ and $|H \cap B^+| = 2$, which is impossible, 
as $H$ would not be in convex position.

If $H\cap A=A^-$,
then the hole $H$ contains no point from~$B^+$.
This is because if there is a point $b_i $ in $ H \cap B^+$,
then the point $b_{i+1}$ lies in the interior of~$\conv(H)$.
The point $b_1$ cannot lie in $H$ because otherwise $H$ is not in convex position
as the line $\overline{a_3a_4}$ separates $b_1$ from $B \setminus \{b_1\}$.
Additionally, $H$ contains at most two points from~$B^-$,
since otherwise $H$ is not in convex position.
Thus $H$ contains at most four points of~$C$, which is impossible.

\item  
$|H \cap A| = 1$.
Assume first that $H \cap A \subseteq A^+$.
Note that for $b_i,b_j \in B^-$ with $i<j\le n$, 
the point $b_{i+1}$ lies inside the triangle $\triangle(a_1,b_i,b_j)$ and, if $j<n$,
the point $b_{j+1}$ lies inside $\triangle(a_2,b_i,b_j)$. 
Thus $H$ contains at most one point from~$B^-$ or we have $H \cap B^- = \{b_{n-2},b_n\}$ and $H \cap A = \{a_2\}$.
The latter case does not occur,
since for every $b_i \in B^+$ with $i<n-1$
the point $b_{n-1}$ lies in the interior of~$\conv(\{a_2,b_i,b_{n-2},b_n\})$.
Therefore we consider the case $|H \cap B^-| \le 1$.
However, $|H \cap B^+| \ge 3$ is impossible since $H$ would not be in convex position.
Altogether, we obtain $|H| <5$, which is impossible.

Now we assume that $H \cap A \subseteq A^-$.
Note that for $b_i,b_j \in B^+$ with $i<j<n$, 
the point $b_{i+1}$ lies inside the triangle $\triangle(a_4,b_i,b_j)$ and
the point $b_{j+1}$ lies inside $\triangle(a_3,b_i,b_j)$. 
Thus $H$ contains at most one point from~$B^+$.
Consequently, $H$ contains at least three points from $B^-$,
which is possible only if $H \cap B^- = \{b_1,b_3,b_5\}$.
However, then $H$ contains a point $b_i$ from $B^+$
and $b_3$ lies in the interior of $\conv (H)$.
\end{enumerate}
Thus, in any case, $H$ is not an $\ell$-divided 5-hole in~$C$,
a contradiction.

To finish the proof, we consider the case where $n$ is even. 
Let $\tilde{C} = A \cup \tilde{B}$ 
be the set constructed above
with $|A|=4$ and $|\tilde{B}|=n+1$.
We set $B \mathrel{\mathop:}= \tilde{B} \setminus \{b_2\}$ and $C \mathrel{\mathop:}= A \cup B$.
Note that $C$ is $\ell$-critical.

It remains to show that $C$ contains no $\ell$-divided 5-hole.
Suppose for contradiction that there is an $\ell$-divided 5-hole $H$ in~$C$. 
There is no $\ell$-divided 5-hole in $\tilde{C}$ and thus $b_2$ lies in the interior of $\conv(H)$.
Since $b_1$ is the only point from $C$ to the right of $b_2$,
the point $b_1$ lies in~$H$.
Since $a_1$ is the only point of $C$ to the left of $\overline{b_2b_1}$, 
all other points of $H$ lie to the right of $\overline{b_2b_1}$.
Then, however,
the set $(H \setminus \{a_1\}) \cup \{b_2\}$ is a 5-hole in $\tilde{C}$, which gives a contradiction.
\end{proof}

\subsection{Necessity of the assumptions in Lemmas~\ref{lemma:lemma9} to \ref{lemma:lemma12}}
\label{section_necessity_of_lemmas}

We remark that all the assumptions in the statements of Lemmas~\ref{lemma:lemma9} to \ref{lemma:lemma12} are necessary;
Figure~\ref{fig:examples}(\subref{fig:example_E}) shows that the conditions $|B|=5$ in Lemma~\ref{lemma:lemma11} and the convexity of $A$ in Lemma~\ref{lemma:lemma12} are both necessary.
The horizontal reflection of Figure~\ref{fig:examples}(\subref{fig:example_E}) also shows the necessity of the assumption $|A|=5$ in Lemma~\ref{lemma:lemma9}.
It follows from the example in Figure~\ref{fig:examples}(\subref{fig:example_G}) that the condition $|B|=4$ cannot be omitted in Lemma~\ref{lemma:lemma12}, since there is an $a$-wedge with three points of~$B$.
The same point set without the point $a'$ shows that the assumption $|B| \ge 4$ in Lemma~\ref{lemma:lemma10} is necessary. 
The example from Figure~\ref{fig:examples}(\subref{fig:example_J}) shows that the conditions $|B| = 6$ in Lemma~\ref{lemma:lemma9}, the convex position of $A$ in Lemma~\ref{lemma:lemma10}, and $|A|=6$ in Lemma~\ref{lemma:lemma11} are all necessary. 
The same set without the point $a$ shows that $|A|=5$ in Lemma~\ref{lemma:lemma10} is also needed
and, if we remove the points $a$ and $a'$, then the resulting point set shows that we need $5 \le |A|$ in Lemma~\ref{lemma:lemma12}.
We can make statements only about convex $a$-wedges in Lemmas~\ref{lemma:lemma10} and \ref{lemma:lemma11}, 
as there are counterexamples for the corresponding statements without the convexity condition.
It suffices to consider so-called \emph{double-chains}, which are point sets obtained by placing $n$ points on each of the two branches of a hyperbola.
Double-chains also show, that $A$ cannot be in convex position in Lemma~\ref{lemma:lemma9},
and, that the non-convex $a$-wedge must be empty of points in $B$ in Lemma~\ref{lemma:lemma12}. 

\begin{figure}[htb]
	\hfill
	\begin{subfigure}[t]{.32\textwidth}\centering
		\includegraphics[width=\textwidth]{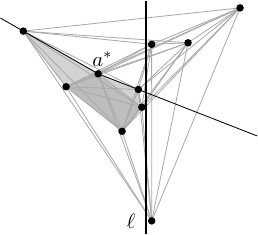}
		\caption{}
		\label{fig:example_E}
	\end{subfigure}
	\hfill
	\begin{subfigure}[t]{.32\textwidth}\centering
		\includegraphics[width=\textwidth]{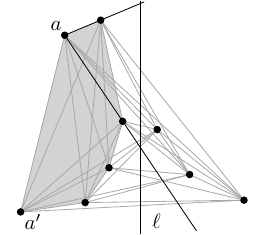}
		\caption{}
		\label{fig:example_G}
	\end{subfigure}
	\hfill
	\begin{subfigure}[t]{.32\textwidth}\centering
		\includegraphics[width=\textwidth]{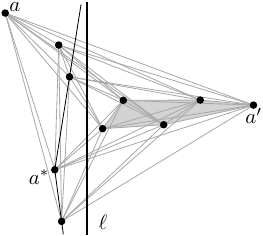}
		\caption{}
		\label{fig:example_J}
	\end{subfigure}
	\hfill
	\caption{
	Examples of points sets that witness tightness of Lemmas~\ref{lemma:lemma9} to \ref{lemma:lemma12}. All $k$-holes in these sets with $k \geq 5$ are highlighted in gray.
	}
	\label{fig:examples}
\end{figure}
\fi

\section*{Acknowledgements}

The research for this article was partially carried out in the course of the bilateral research project ``Erd\H{o}s--Szekeres type questions for point sets'' 
between Graz and Prague, supported by the OEAD project CZ~18/2015 and project no.\ 7AMB15A~T023 of the Ministry of Education of the Czech Republic.

Aichholzer, Scheucher, and Vogtenhuber were partially supported by the ESF EUROCORES programme EuroGIGA -- CRP ComPoSe, Austrian Science Fund (FWF): I648-N18.
Parada was supported by the Austrian Science Fund (FWF): W1230.
Balko and Valtr were partially supported by the grant GAUK 690214.
Balko, Kyn\v{c}l, and Valtr were partially supported by the grant no. 18-19158S of the Czech Science Foundation (GA\v{C}R) and by the PRIMUS/17/SCI/3 project of Charles University.
Balko and Kyn\v{c}l were partially supported by Charles University project UNCE/SCI/004.
Hackl and Scheucher were partially supported by the Austrian Science Fund (FWF): P23629-N18.
Balko, Scheucher, and Valtr were partially supported by the ERC Advanced Research Grant no 267165 (DISCONV).
Scheucher, Parada, and Vogtenhuber were partially supported within the collaborative DACH project \emph{Arrangements and Drawings}, by grants DFG: FE 340/12-1 and FWF: I 3340-N35, respectively.

We thank G\"unter Rote and the anonymous reviewers for carefully going through the manuscript 
and for their valuable comments that helped to improve the quality of the paper and the overall presentation.

\bibliography{holesbib_arXiv}

\newpage
\appendix
\section{Flow summary}
\label{appendix:flow_summary}

\begin{figure}[htb]
  \centering
  \includegraphics{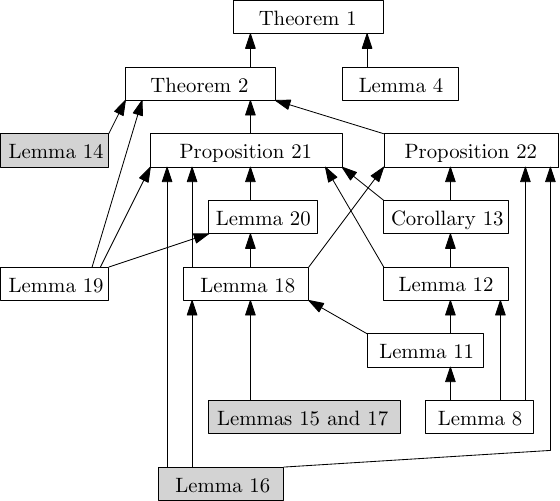}
  \caption{Flow summary. The shaded boxes correspond to computer-assisted results.}
\label{fig:flowDiagram}
\end{figure}
		
\end{document}